\documentclass[pdflatex,sn-mathphys-num]{sn-jnl}

\usepackage[T1]{fontenc}
\usepackage{lmodern}

\usepackage{amsmath,amssymb,amsfonts}
\usepackage{amsthm}
\usepackage{enumitem}
\usepackage{tikz}
\usepackage{xcolor}

\usepackage[capitalize,nameinlink]{cleveref}
\colorlet{facelight}{gray!12}
\colorlet{facetshade}{gray!55}

\crefname{theorem}{Theorem}{Theorems}
\crefname{lemma}{Lemma}{Lemmas}
\crefname{proposition}{Proposition}{Propositions}
\crefname{corollary}{Corollary}{Corollaries}
\crefname{observation}{Observation}{Observations}
\crefname{remark}{Remark}{Remarks}
\crefname{example}{Example}{Examples}
\crefname{section}{Section}{Sections}
\crefname{subsection}{Subsection}{Subsections}
\crefname{equation}{}{}
\creflabelformat{equation}{(#2#1#3)}

\makeatletter
\newtheoremstyle{thmstyleone}
{18pt plus2pt minus1pt}
{18pt plus2pt minus1pt}
{\small\itshape}
{0pt}
{\small\bfseries}
{}
{.5em}
{\thmname{#1}\thmnumber{\@ifnotempty{#1}{ }\@upn{#2}}%
  \thmnote{ {\the\thm@notefont(#3)}}}
\newtheoremstyle{thmstyletwo}
{18pt plus2pt minus1pt}
{18pt plus2pt minus1pt}
{\small\normalfont}
{0pt}
{\small\itshape}
{}
{.5em}
{\thmname{#1}\thmnumber{\@ifnotempty{#1}{ }{#2}}%
  \thmnote{ {\the\thm@notefont(#3)}}}
\makeatother

\theoremstyle{thmstyleone}%
\newtheorem{theorem}{Theorem}[section]
\newtheorem{lemma}[theorem]{Lemma}
\newtheorem{proposition}[theorem]{Proposition}
\newtheorem{corollary}[theorem]{Corollary}

\theoremstyle{thmstyletwo}%
\newtheorem{remark}[theorem]{Remark}
\newtheorem{example}[theorem]{Example}

\numberwithin{equation}{section}

\hypersetup{bookmarksdepth=2}

\makeatletter
\renewcommand*{\top}{%
  {\mathpalette\@transpose{}}%
}
\newcommand*{\@transpose}[2]{%
  \raisebox{\depth}{$\m@th#1\scriptscriptstyle\mathsf{T}$}%
}
\makeatother

\newcommand{\Po}{\mathcal{P}}
\newcommand{\OP}{\mathcal{O}(P)}
\newcommand{\D}{\mathcal{D}}
\DeclareMathOperator{\conv}{conv}
\DeclareMathOperator{\rank}{rank}

\DeclareMathOperator{\dir}{dir}
\DeclareMathOperator{\Span}{span}
\DeclareMathOperator*{\argmax}{argmax}

\newcommand{\CP}[1]{\cite[#1]{QL_orderpolytope}}
\newcommand{\CPsecprelim}{Section~2}
\newcommand{\CPdimofFMP}{Lemma~2.5}
\newcommand{\CPpersistence}{Lemma~2.6}
\newcommand{\CPcayley}{Proposition~3.1}
\newcommand{\CPbijection}{Corollary~3.2}
\newcommand{\CPvertical}{Theorem~3.6}
\newcommand{\CPcrossproduct}{Lemma~4.1}
\newcommand{\CPdelta}{Lemma~4.2}
\newcommand{\CPposetheight}{Theorem~4.6}
\newcommand{\CPexp}{Corollary~4.21}
\newcommand{\CPremdelta}{Remark~4.7}
\newcommand{\CPgrowth}{Corollary~4.20}
\newcommand{\CPexample}{Example~3.9}
\newcommand{\CPdeletezeros}{Lemma~4.8}
\newcommand{\CPsupportdim}{Lemmas~2.4 and~2.5}
\newcommand{\CPsharptwo}{Theorem~4.17}
\newcommand{\CPlowerfour}{Proposition~4.25}
\newcommand{\CPopen}{Remark~4.29}
\newcommand{\CPcoeffH}{$H_\ell(d)$}

\newcommand{\jon}[1]{}
\newcommand{\yushan}[1]{}

\begin{document}

\title[Facet normals of Minkowski sums of staircase simplices]{Facet normals of Minkowski sums\hb{} of staircase simplices}

\author[1]{\fnm{Yushan} \sur{Qu}}\email{yushanqu@umich.edu}
\author*[1]{\fnm{Jon} \sur{Lee}}\email{jonxlee@umich.edu}

\affil*[1]{\orgname{University of Michigan}, \orgaddress{\city{Ann Arbor}, \state{MI}, \country{USA}}}

\abstract{\mathversion{normal}Staircase simplices are the simplices of the standard (Freudenthal--Kuhn) triangulation of $[0,1]^d$ and of Stanley's triangulations of order polytopes. We study the facet normals of their Minkowski sums and how the number of summands constrains the size of primitive integer normals. We characterize the normals attainable with a prescribed number of summands in terms of $d-1$ linearly independent zero-one vectors orthogonal to the normal, whose supports have an intersection graph that can be colored with one color per summand. This gives a combinatorial characterization of the least number of summands needed to realize a normal. For two summands, all primitive normals have entries in $\{-1,0,1\}$, and we determine which of them occur. For three summands we bound every entry in absolute value by $3^{(d-2)/3}$ and construct normals with entries of order $\theta^d$, $\theta$ the plastic number; for four summands a second construction gives entries of order $\psi^d$, $\psi$ the supergolden ratio. Since $\psi>3^{1/3}$, the largest coefficients for three and four summands are exponentially separated. For vectors with at most two distinct nonzero entry values, we determine the least number of summands exactly, using a matching-rank formula proved by exterior algebra and representations of symmetric groups. With no restriction on the number of summands, the maximum coefficient equals the classical maximum for facets of $0/1$-polytopes in the same dimension.}

\keywords{Minkowski sum, staircase simplex, facet normal, alcoved polytope, graph coloring, order polytope}

\pacs[MSC Classification]{52B12, 52B05, 05C15, 05E10, 90C11}

\maketitle

\section{Introduction}\label{sec:intro}

The normal fan of a Minkowski sum is the common refinement of the normal fans of its summands~\cite[Proposition~7.12]{ziegler}, so a sum can have facet directions that none of its summands has. This paper asks which directions arise, and how large their coefficients can be, when the summands are the simplices of the standard triangulation of the cube.

Let $d\ge2$. For a permutation $\sigma=(\sigma_1,\ldots,\sigma_d)$ of $[d]$, the \emph{staircase simplex} is
\[
\Delta_\sigma:=\{x\in[0,1]^d:x_{\sigma_1}\le x_{\sigma_2}\le\cdots\le x_{\sigma_d}\}.
\]
The $d!$ staircase simplices triangulate the cube $[0,1]^d$; this is the standard triangulation of the cube, also known as the Freudenthal--Kuhn triangulation~\cite{Kuhn1960}. More generally, for a poset $P$ on $[d]$, the simplices $\Delta_\sigma$ indexed by the linear extensions of $P$ form Stanley's triangulation of its order polytope~\cite{Stanley1986}. We study the facet normals of Minkowski sums
\[
Q=\Delta_{\sigma^0}+\Delta_{\sigma^1}+\cdots+\Delta_{\sigma^n},
\]
where $n\ge0$ and the permutations $\sigma^0,\ldots,\sigma^n$ need not be distinct. Our aim is to characterize the attainable normal directions and determine how the number of summands constrains their coefficients.

Each staircase simplex is alcoved in the sense of Lam and Postnikov~\cite{LamPostnikov2007}: its facet normals are positive multiples of vectors of the form $\mathbf{e}_i$, $-\mathbf{e}_i$, or $\mathbf{e}_i-\mathbf{e}_j$. Minkowski sums of alcoved polytopes need not remain alcoved, so even sums of staircase simplices can have facet-normal directions outside the alcoved class. By contrast, sums of coordinate simplices $\conv\{\mathbf{e}_i:i\in I\}$ are generalized permutahedra~\cite{Postnikov2009}, whose normal fans coarsen the braid fan, so they never leave a fixed finite set of directions. \Cref{fig:sum} shows the smallest example: the sum of two staircase simplices in $\mathbb{R}^3$ with a facet whose normal is not alcoved.

\begin{figure}[t]
\centering
\begin{tikzpicture}[x=1cm,y=1cm]
\filldraw[fill=facelight,draw=black,line width=0.5pt] (-0.578,1.938) -- (0.000,0.000) -- (0.752,2.249) -- cycle;
\filldraw[fill=facelight,draw=black,line width=0.5pt] (-0.578,0.715) -- (0.000,0.000) -- (-0.578,1.938) -- cycle;
\draw[dashed,gray,line width=0.4pt] (-0.578,0.715) -- (0.752,2.249);
\node at (0.09,-0.45) {$\Delta_{(1,3,2)}$};
\node at (1.35,1.12) {$+$};
\filldraw[fill=facelight,draw=black,line width=0.5pt] (3.463,2.249) -- (2.712,0.000) -- (4.041,1.533) -- cycle;
\filldraw[fill=facelight,draw=black,line width=0.5pt] (4.041,1.533) -- (2.712,0.000) -- (4.041,0.311) -- cycle;
\draw[dashed,gray,line width=0.4pt] (4.041,0.311) -- (3.463,2.249);
\node at (3.38,-0.45) {$\Delta_{(2,3,1)}$};
\node at (4.64,1.12) {$=$};
\filldraw[fill=facelight,draw=black,line width=0.5pt] (6.515,3.286) -- (5.763,1.038) -- (7.093,2.571) -- cycle;
\filldraw[fill=facelight,draw=black,line width=0.5pt] (5.763,-0.185) -- (6.341,-0.900) -- (5.763,1.038) -- cycle;
\filldraw[fill=facelight,draw=black,line width=0.5pt] (7.093,2.571) -- (7.671,0.633) -- (8.423,2.882) -- cycle;
\filldraw[fill=facelight,draw=black,line width=0.5pt] (7.671,0.633) -- (6.341,-0.900) -- (7.671,-0.589) -- cycle;
\filldraw[fill=facetshade,draw=black,line width=0.5pt] (5.763,1.038) -- (6.341,-0.900) -- (7.671,0.633) -- (7.093,2.571) -- cycle;
\filldraw[fill=facelight,draw=black,line width=0.5pt] (6.515,3.286) -- (7.093,2.571) -- (8.423,2.882) -- (7.845,3.597) -- cycle;
\filldraw[fill=facelight,draw=black,line width=0.5pt] (7.671,0.633) -- (7.671,-0.589) -- (8.423,1.660) -- (8.423,2.882) -- cycle;
\draw[dashed,gray,line width=0.4pt] (6.515,2.064) -- (6.515,3.286);
\draw[dashed,gray,line width=0.4pt] (6.515,2.064) -- (7.845,3.597);
\draw[dashed,gray,line width=0.4pt] (5.763,-0.185) -- (7.093,0.126);
\draw[dashed,gray,line width=0.4pt] (7.093,0.126) -- (8.423,1.660);
\draw[dashed,gray,line width=0.4pt] (7.671,-0.589) -- (7.093,0.126);
\draw[dashed,gray,line width=0.4pt] (5.763,-0.185) -- (6.515,2.064);
\draw[dashed,gray,line width=0.4pt] (8.423,1.660) -- (7.845,3.597);
\draw[dashed,gray,line width=0.4pt] (7.093,0.126) -- (6.515,2.064);
\node at (7.09,-1.35) {$\Delta_{(1,3,2)}+\Delta_{(2,3,1)}$};
\end{tikzpicture}
\caption{The staircase simplices $\Delta_{(1,3,2)}=\{0\leq x_1\leq x_3\leq x_2\leq1\}$ and $\Delta_{(2,3,1)}=\{0\leq x_2\leq x_3\leq x_1\leq1\}$ and their Minkowski sum, which has $14$ facets. The shaded facet has outward normal $(-1,-1,1)$; the facet with normal $(1,1,-1)$ is on the far side. Neither is a facet normal of any single staircase simplex (\cref{ex:facetnormalconstruction}(ii)). Hidden edges are dashed.}
\label{fig:sum}
\end{figure}

Early, K\"uhne, and Monin~\cite{early2025when,EKM2025fpsac} characterize when Minkowski sums of alcoved polytopes remain alcoved, proving that the sum of a collection is alcoved if and only if every pairwise sum is, and giving a graphical criterion for alcoved simplices. They also identify the common refinement of the normal fans of all alcoved polytopes with the fan of the all-subset arrangement \cite[Proposition~2.11]{early2025when}, and describe the rays of pointed intersections of two root cones by alternating cycles \cite[Proposition~3.6]{early2025when}. Our focus is the dependence on the number of staircase summands.

Every sum $Q$ is an integral polytope, so each of its facet-normal rays has a unique primitive integer representative. Throughout, bounds on coefficients refer to this representative, whose entries have greatest common divisor one. For a nonzero vector $\alpha\in\mathbb{R}^d$, let $\kappa(\alpha)$ be the least number of staircase simplices whose Minkowski sum has $\alpha$ as a facet normal, with $\kappa(\alpha)=\infty$ if no such sum exists. 

This is the second of two papers. The first \cite{QL_orderpolytope} studies, for a poset $P$ on $[d]$, the Cayley embedding $\D$ of the simplices of Stanley's triangulation of the order polytope of $P$, motivated by disjunctive programming \cite{balas2018disjunctive} and by formulations of piecewise-linear functions. It shows that the facets of $\D$ other than those described by the bounds on the indicator variables correspond to the facet-normal rays of the Minkowski sum $M(P)$ of the simplices (the Cayley trick, see \cite{HuberRambauSantos2000}; \CP{\CPcayley, \CPbijection}), it characterizes the posets for which $M(P)$ is alcoved \CP{\CPvertical}, and it bounds the entries of the primitive facet normals of $M(P)$ in terms of the distance of $P$ from a chain \CP{\CPposetheight}. The present paper drops the poset and asks what the number of summands alone implies. We follow the notation of \cite{QL_orderpolytope} and refer to it for the results we need; \cref{sec:orderpolytopes} explains how the results below apply to the sums $M(P)$.

Our first result is a combinatorial characterization of the attainable normals. The starting point, taken from \cite{QL_orderpolytope}, is that the direction space of the face of $\Delta_\sigma$ on which $\alpha$ is maximized is spanned by the indicator vectors of its maximizing blocks, which are blocks of consecutive entries of $\sigma$ with zero $\alpha$-sum (\cref{lem:dimofFMP}); this turns a question about exposed faces into one about zero-sum subsets of $[d]$. We prove that a nonzero vector $\alpha$ is a facet normal of a sum of $n+1$ staircase simplices if and only if there are $d-1$ nonempty subsets of $[d]$ with zero $\alpha$-sum, linearly independent indicator vectors, and an intersection graph of chromatic number at most $n+1$, where two subsets are adjacent when they intersect (\cref{thm:facetnormalcharacterization}). Consequently, $\kappa(\alpha)$ is the least chromatic number over all such families, $\kappa(\alpha)\leq d-1$ whenever it is finite, and $\kappa(\alpha)<\infty$ exactly when the zero-sum indicator vectors span $\alpha^\perp$ (\cref{cor:anysum}), that is, exactly when $\alpha$ spans a ray of the chamber fan of the resonance arrangement (\cref{rem:universal}); \cref{thm:facetnormalcharacterization} refines this unrestricted description by characterizing realizability with a prescribed number of summands.

Our second group of results concerns the growth of primitive facet coefficients. Let $\mathfrak{h}_m(d)$ be the largest absolute entry of a primitive facet normal of a Minkowski sum of $m$ staircase simplices, not necessarily distinct, in $\mathbb{R}^d$; it is nondecreasing in $m$, because adding a summand preserves facet normals. With $\mathrm{Pad}_i$ the Padovan numbers and $\mathcal{N}_i$ Narayana's cows numbers, we prove
\begin{gather*}
\mathfrak{h}_2(d)=1,\qquad \mathrm{Pad}_{d-1}\leq \mathfrak{h}_3(d)\leq3^{(d-2)/3}\quad(d\geq4),\\
\mathcal{N}_d\leq \mathfrak{h}_4(d)\leq12^{(d-1)/4}\quad(d\geq3),
\end{gather*}
and $\mathfrak{h}_m(d)\leq m^{(d-1)/2}$ in general (\cref{thm:inftynormofalpha,thm:padovan,thm:narayana}). For three summands and $d\le6$ we determine $\mathfrak{h}_3(d)$ exactly (\cref{prop:H3small}). The two sequences grow like $\theta^d$ and $\psi^d$ for the plastic number $\theta=1.3247\ldots$ and the supergolden ratio $\psi=1.4655\ldots$ (\cref{cor:H3,thm:narayana}). Since $\psi>3^{1/3}$,
\[
\limsup_{d\to\infty}\mathfrak{h}_3(d)^{1/d}
\le3^{1/3}<\psi
\le\liminf_{d\to\infty}\mathfrak{h}_4(d)^{1/d},
\]
so $\mathfrak{h}_4(d)/\mathfrak{h}_3(d)$ grows at least exponentially with $d$; the four-summand construction requires exactly four summands in every dimension $d\ge5$, and already $\mathfrak{h}_4(5)=3>2=\mathfrak{h}_3(5)$ (\cref{prop:fourinfive}).

When the number of summands is unrestricted, the coefficient problem has a classical interpretation: for $m\geq d-1$, $\mathfrak{h}_m(d)$ equals the largest coefficient in a facet-describing inequality, with coprime integer coefficients, of a full-dimensional $0/1$-polytope in $\mathbb{R}^d$ (\cref{cor:anysum,prop:coeff}). By the bounds of Alon and V\~u~\cite{AlonVu97}, as presented by Ziegler~\cite[Section~5]{Ziegler01}, and the zero-one version of Hadamard's maximal determinant problem, this quantity grows like $d^{d/2}$ up to a factor $2^{O(d)}$.

Our third group of results determines the exact summand count for normals with at most two distinct nonzero entry values. For $\alpha\in\{-1,0,1\}^d$ we give a closed formula for $\kappa(\alpha)$ (\cref{thm:kappa}), which also identifies exactly which vectors in $\{-1,0,1\}^d$ are facet normals of sums of two staircase simplices (\cref{cor:two}). The main case is $\alpha=(a^p,-b^q)$, where $a,b$ are coprime positive integers and $p,q\ge1$ are their multiplicities. Every minimal nonempty subset with zero $\alpha$-sum contains $b$ positive and $a$ negative coordinates. We prove an exact formula for the maximum rank of the indicators in a union of a prescribed number of full matchings, that is, largest sets of pairwise disjoint minimal zero-sum subsets (\cref{thm:matchingrank}). Its proof uses exterior algebra and representations of symmetric groups, and it determines the least number of summands, including when it is finite (\cref{thm:twomagnitude}), and the maximum exposed-face dimension (\cref{thm:exposedface}). Zero padding then covers every vector with at most two distinct nonzero values (\cref{cor:twovalues}).

\smallskip\noindent\emph{Outline.}
\Cref{sec:prelim} gives preliminaries, \cref{sec:characterization} the characterization, \cref{sec:size,sec:three} the coefficient bounds and constructions, \cref{sec:kappa,sec:twomagnitude} the exact summand counts, and \cref{sec:orderpolytopes} the application to order polytopes.

\smallskip\noindent\emph{Notation.}
We use the notation of \cite{QL_orderpolytope}. For a positive integer $d$, let $[d]:=\{1,2,\ldots,d\}$ and $[d_0]:=\{0,1,\ldots,d\}$. For a nonnegative integer $n$, let $N_0:=\{0,1,\ldots,n\}$. We write $\mathbf{e}_i$ for the $i$-th standard unit vector, $\mathbf{e}$ for the all-ones vector, $0$ for the zero vector, and $\mathbf{e}_B:=\sum_{i\in B}\mathbf{e}_i$ for the indicator vector of a set $B$; the dimension will be clear from the context. For $\alpha\in\mathbb{R}^d$, let $\alpha^\perp:=\{x\in\mathbb{R}^d:\alpha^\top x=0\}$. We write $\|\alpha\|_\infty:=\max_i|\alpha_i|$. The symbol $\subset$ denotes inclusion, not necessarily proper.

\section{Preliminaries}\label{sec:prelim}
This section fixes notation and recalls, from \CP{\CPsecprelim}, the facts about staircase simplices and their Minkowski sums that we use. The first paper states them for the linear extensions of a poset, but, as noted there, their proofs use only that the permutations involved are permutations of $[d]$, so they hold for every Minkowski sum of staircase simplices; we restate them in that generality and refer to \cite{QL_orderpolytope} for the proofs.

\subsection{Staircase simplices}\label{subsec:order}
Let $\sigma$ be a permutation of $[d]$. The vertices of $\Delta_\sigma$ are $v^\sigma_r:=\sum_{t=r+1}^d\mathbf{e}_{\sigma_t}$\,, $r\in[d_0]$; thus $v^\sigma_0=\mathbf{e}$ and $v^\sigma_d=0$\,. Equivalently, $\Delta_\sigma=\{x\in\mathbb{R}^d: -x_{\sigma_1}\leq0,\ x_{\sigma_d}\leq1,\ x_{\sigma_r}-x_{\sigma_{r+1}}\leq0 \text{ for } r\in[d-1]\}$, and each of these $d+1$ inequalities describes a facet; the gap coordinates $y_0=x_{\sigma_1}$\,, $y_r=x_{\sigma_{r+1}}-x_{\sigma_r}$ ($r\in[d-1]$), $y_d=1-x_{\sigma_d}$ map $\Delta_\sigma$ affinely onto the standard simplex, with $v^\sigma_r$ corresponding to $y_r=1$ (see \CP{Section~2.1}).

Throughout, $n\geq0$ is an integer and $\sigma^0,\ldots,\sigma^n$ is a sequence of permutations of $[d]$, not necessarily distinct. As in \cite{QL_orderpolytope}, we write $\Delta_k:=\Delta_{\sigma^k}$ and $v^k_r:=v^{\sigma^k}_r$ for $k\in N_0$ and $r\in[d_0]$, and we put $Q:=\sum_{k\in N_0}\Delta_k$\,. When $\sigma^0,\ldots,\sigma^n$ are the linear extensions of a poset $P$, this is the polytope $M(P)$ of \cite{QL_orderpolytope}.

\subsection{Support functions and maximizing blocks}\label{subsec:support}
We use standard terminology for polytopes (see \cite{ziegler}), in the form of \CP{Section~2.4}: an inequality $\alpha^\top x\leq\beta$ that is valid for a polytope $K$ \emph{describes} the face $K\cap\{x:\alpha^\top x=\beta\}$; $h_K(\alpha):=\max_{x\in K}\alpha^\top x$ is the support function, and $F_K(\alpha):=\{x\in K:\alpha^\top x=h_K(\alpha)\}$ is the face it exposes, so $F_K(0)=K$; for full-dimensional $K$, a vector $\alpha\neq0$ is a \emph{facet normal} of $K$ if $\dim F_K(\alpha)=d-1$, and its \emph{facet-normal ray} is $\{\lambda\alpha:\lambda>0\}$; and $\dir C:=\Span\{x-y:x,y\in C\}$ is the direction space of a nonempty convex set $C$, so that $\dim C=\dim\dir C$ and $\dir(A+B)=\dir A+\dir B$. For $k\in N_0$ let $h_k(\alpha):=h_{\Delta_k}(\alpha)$. Support functions and faces are additive under Minkowski sums (see \cite{Schneider2014}): for every $\alpha\in\mathbb{R}^d$,
\begin{equation}\label{eq:minkowskiadd}
\textstyle h_{Q}(\alpha)=\sum_{k\in N_0}h_k(\alpha)\,,\qquad F_{Q}(\alpha)=\sum_{k\in N_0}F_{\Delta_k}(\alpha)\,,
\end{equation}
and $Q$ is full dimensional, because each $\Delta_k$ is.

\smallskip
\noindent\emph{Maximizing blocks.}
Following \CP{Section~2.4}, for $k \in N_0$ and $\alpha \in \mathbb{R}^d$ let $s^k_r(\alpha) := \alpha^\top v^{k}_r = \sum_{t=r+1}^d \alpha_{\sigma^k_t}$ for $r\in[d_0]$, the suffix sums of $\alpha$ along $\sigma^k$, and let $R_k(\alpha) := \argmax_{r \in [d_0]}s^k_r(\alpha)$, with elements $r^{k}_0 < r^{k}_1 < \cdots < r^{k}_{m_k}$\,, where $m_k = m_k(\alpha) := |R_k(\alpha)| - 1$. For $j\in[m_k]$, the $j$-th \emph{maximizing block of $\sigma^k$ with respect to $\alpha$} is the set $B_{k,j}(\alpha) := \{\sigma^k_{r^k_{j-1}+1}\,, \ldots, \sigma^k_{r^k_{j}} \}$ of entries of $\sigma^k$ in positions $r^k_{j-1}+1$ through $r^k_j$\,, so that $v^k_{r^k_{j-1}}-v^k_{r^k_j}=\mathbf{e}_{B_{k,j}(\alpha)}$\,. If $|R_k(\alpha)|=1$, there are no blocks, and if $\alpha=0$, every entry is a block by itself. The following lemma combines \CP{\CPsupportdim}.

The following lemma combines \cite[Lemma~2.4]{QL_orderpolytope}, which is part (a), and \cite[Lemma~2.5]{QL_orderpolytope}, which gives parts (b) and (c). In \cite{QL_orderpolytope} these results are stated for $M(P)$, the sum over the distinct linear extensions of a poset, whereas $Q$ is the sum over an arbitrary sequence of permutations, possibly with repetitions. This makes no difference to the proofs: part (a) concerns a single simplex $\Delta_k$, and, as noted after \cite[Lemma~2.5]{QL_orderpolytope}, the proof of (b) and (c) uses only that each $\sigma^k$ is a permutation of $[d]$. We therefore state the lemma for $Q$ and refer to \cite{QL_orderpolytope} for the proof.

\begin{lemma}[{\CP{\CPsupportdim}}]\label{lem:dimofFMP}
For every $k\in N_0$ and $\alpha\in\mathbb{R}^d$:
\begin{enumerate}[label=(\alph*)]
    \item $h_k(\alpha)=\max_{r\in[d_0]}s_r^k(\alpha)$, and $F_{\Delta_k}(\alpha)=\conv\{v_r^k:r\in R_k(\alpha)\}$;
    \item the vectors $\mathbf{e}_{B_{k,j}(\alpha)}$\,, $1\leq j\leq m_k(\alpha)$, form a basis of $\dir F_{\Delta_k}(\alpha)$, and each of them is orthogonal to $\alpha$; in particular, the blocks $B_{k,1}(\alpha),\ldots,B_{k,m_k}(\alpha)$ are nonempty and pairwise disjoint;
    \item $\dim F_{Q}(\alpha) = \rank \{ \mathbf{e}_{B_{k,j}(\alpha)}: k \in N_0\,,\ 1 \leq j \leq m_k(\alpha) \}$.
\end{enumerate}
In particular, for $\alpha\neq0$ this rank is at most $d-1$, and $\alpha$ is a facet normal of $Q$ if and only if the rank equals $d-1$, that is, the vectors $\mathbf{e}_{B_{k,j}(\alpha)}$ span $\alpha^\perp$.
\end{lemma}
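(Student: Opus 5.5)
The plan is to prove the three parts in order, working one simplex at a time through the gap coordinates of \cref{subsec:order}, and then pass to $Q$ with the additivity \cref{eq:minkowskiadd}.

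For (a), a linear functional on a polytope attains its maximum at a vertex. The maximizing face of a simplex is the convex hull of the maximizing vertices, since every face of a simplex is the convex hull of a subset of its vertices. Here $\alpha^\top v^k_r=s^k_r(\alpha)$, so $h_k(\alpha)=\max_r s^k_r(\alpha)$ and $F_{\Delta_k}(\alpha)=\conv\{v^k_r:r\in R_k(\alpha)\}$.

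For (b), I will use that the vertices $v^k_0,\dots,v^k_d$ are affinely independent, because $\Delta_k$ is a $d$-simplex. Hence the direction space of $\conv\{v^k_r:r\in R_k\}$ has basis given by the differences of consecutive chosen vertices, $v^k_{r_{j-1}}-v^k_{r_j}$ for $j\in[m_k]$. These differences telescope to the indicators $\mathbf{e}_{B_{k,j}(\alpha)}$. They are orthogonal to $\alpha$ because both endpoints attain the same maximal value $h_k(\alpha)$. The blocks are nonempty since $r_{j-1}<r_j$, and they are pairwise disjoint since they occupy disjoint ranges of positions of the permutation $\sigma^k$.

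For (c), I combine $F_Q(\alpha)=\sum_k F_{\Delta_k}(\alpha)$ from \cref{eq:minkowskiadd} with $\dir(A+B)=\dir A+\dir B$. This gives $\dir F_Q(\alpha)=\sum_k\dir F_{\Delta_k}(\alpha)$, which by (b) is spanned by all the block indicators. Therefore $\dim F_Q(\alpha)$ equals the rank of that family.

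For the final sentence, suppose $\alpha\ne0$. Every block indicator lies in $\alpha^\perp$, a space of dimension $d-1$, so the rank is at most $d-1$. Because $Q$ is full dimensional, $\alpha$ is a facet normal exactly when $\dim F_Q(\alpha)=d-1$. That happens exactly when the indicators have rank $d-1$, i.e.\ when they span $\alpha^\perp$.

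I do not expect a real obstacle. The only point needing care is the basis claim in (b), which rests on the affine independence of the simplex vertices. The degenerate cases also need a check. If $|R_k|=1$, the face is a vertex and the basis is empty. If $\alpha=0$, then $R_k=[d_0]$ and the singleton blocks give a basis of $\mathbb{R}^d$.
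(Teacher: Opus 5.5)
Your proof is correct and is essentially the argument the paper relies on. The paper does not reprove the lemma but defers to the companion paper, and the facts it recalls in \cref{subsec:order,subsec:support} are exactly the ones you use: the vertex description of $\Delta_\sigma$ with $v^k_{r^k_{j-1}}-v^k_{r^k_j}=\mathbf{e}_{B_{k,j}(\alpha)}$, the additivity \eqref{eq:minkowskiadd} of exposed faces, and $\dir(A+B)=\dir A+\dir B$. The one step you assert rather than check, that the vertices are affinely independent, also follows directly from $v^k_{r-1}-v^k_r=\mathbf{e}_{\sigma^k_r}$ for $r\in[d]$.
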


Thus $h_k(\alpha)$ is the largest suffix sum of $\alpha$ along $\sigma^k$, including the empty suffix, and the maximizing blocks of $\sigma^k$ are the runs of $\sigma^k$ between consecutive positions at which this maximum is attained.

Facet normals persist when summands are added; this is the common-refinement property of normal fans (see \cite[Proposition~7.12]{ziegler}).

\begin{lemma}[{\CP{\CPpersistence}}]\label{lem:persistenceaddition}
Let $K \subset \mathbb{R}^d$ be a full-dimensional polytope, and let
$R \subset\mathbb{R}^d$ be a nonempty polytope. Every facet-normal ray of $K$ is a facet-normal ray of $K+R$.
\end{lemma}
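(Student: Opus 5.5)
The statement to prove is \cref{lem:persistenceaddition}: every facet-normal ray of $K$ is a facet-normal ray of $K+R$. I will argue directly from the additivity of exposed faces under Minkowski sums, i.e.\ \cref{eq:minkowskiadd}, which holds for arbitrary polytopes and not only for staircase simplices. Since $K$ is full dimensional, $K+R$ is full dimensional, so "facet normal of $K+R$" means $\dim F_{K+R}(\alpha)=d-1$. Rays are scale invariant, so it suffices to fix a facet normal $\alpha\neq0$ of $K$, with $\dim F_K(\alpha)=d-1$, and show that $\dim F_{K+R}(\alpha)=d-1$.

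\textbf{Step 1 (face decomposition).} The first step is the decomposition
\[
F_{K+R}(\alpha)=F_K(\alpha)+F_R(\alpha).
\]
This follows because $h_{K+R}(\alpha)=h_K(\alpha)+h_R(\alpha)$. A point $x+y$ with $x\in K$ and $y\in R$ attains this maximum exactly when $x$ attains $h_K(\alpha)$ and $y$ attains $h_R(\alpha)$, since each term is bounded by its own maximum.

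\textbf{Step 2 (dimension count).} Taking direction spaces gives
\[
\dir F_{K+R}(\alpha)=\dir F_K(\alpha)+\dir F_R(\alpha).
\]
This space contains $\dir F_K(\alpha)$, which has dimension $d-1$, so $\dim F_{K+R}(\alpha)\ge d-1$.

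For the upper bound, every difference of two points in $F_{K+R}(\alpha)$ lies in $\alpha^\perp$, because both points have $\alpha$-value $h_{K+R}(\alpha)$. Hence $\dim F_{K+R}(\alpha)\le d-1$, using $\alpha\ne0$. Combining the two bounds gives exactly $d-1$.

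\textbf{Main obstacle.} The only point needing care is Step 1, the characterization of maximizers of a sum. That step is elementary, and the rest is bookkeeping with direction spaces.
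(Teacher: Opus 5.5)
Your proof is correct. The paper does not prove this lemma itself. It cites the first paper and attributes the statement to the common-refinement property of normal fans \cite[Proposition~7.12]{ziegler}.

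In that dual route, one notes that the normal cone of $F_{K+R}(\alpha)$ is the intersection of the normal cones of $F_K(\alpha)$ and $F_R(\alpha)$. The first cone is the ray through $\alpha$, and the second contains $\alpha$, so the intersection is that ray. For the full-dimensional polytope $K+R$, a one-dimensional normal cone means the face is a facet.

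You work on the primal side instead. From \eqref{eq:minkowskiadd} and additivity of direction spaces, $F_{K+R}(\alpha)$ contains a translate of the facet $F_K(\alpha)$, and the hyperplane bound caps the dimension at $d-1$. This is the same dimension count the paper uses at the end of the proof of \cref{thm:facetnormalcharacterization}.

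Your version is self-contained: it needs neither the normal-fan machinery nor the correspondence between facets of a full-dimensional polytope and rays of its normal fan. It also gives the explicit description $F_{K+R}(\alpha)=F_K(\alpha)+F_R(\alpha)$ of the persisting facet. The fan viewpoint instead places the lemma in the picture the paper builds on: the normal fan of the sum refines those of the summands, and new rays arise only from intersections of their cones. Those new rays are what the paper studies.
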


The converse fails: the common refinement may have new rays, which are the subject of this paper.

\smallskip
\noindent\emph{Intersection graphs.}
For nonempty subsets $B_1\,,\ldots,B_{d-1}\subset[d]$, let $\Gamma(B_1\,,\ldots,B_{d-1})$ be their \emph{intersection graph}: its vertex set is $[d-1]$, and $j\neq j'$ are adjacent when $B_j\cap B_{j'}\neq\varnothing$. Partitioning $[d-1]$ into classes $J_0\,,\ldots,J_n$ such that $B_j\cap B_{j'}=\varnothing$ for all $j\neq j'$ in the same class is the same as properly coloring $\Gamma(B_1\,,\ldots,B_{d-1})$ with $n+1$ colors, the classes being the color classes. We write $\chi(\Gamma)$ for the chromatic number of a graph $\Gamma$.

\section{Characterization}\label{sec:characterization}
The vertices $v^k_r$\,, the sets $R_k(\alpha)$, and the maximizing blocks $B_{k,j}(\alpha)$ are those of \cref{sec:prelim}, for the permutations $\sigma^0,\ldots,\sigma^n$ and the sum $Q=\sum_{k\in N_0}\Delta_{\sigma^k}$\,.

\begin{theorem}\label{thm:facetnormalcharacterization}
Let $d\geq2$ and $n\geq0$ be integers, and let $\alpha\in\mathbb{R}^d\setminus\{0\}$. The following statements are equivalent.
\begin{enumerate}[label=(\arabic*),ref=(\arabic*)]
\item\label{thm:fnc:1} There exist permutations $\sigma^0,\ldots,\sigma^n$ of $[d]$, not necessarily distinct, such that $\alpha$ is a facet normal of $Q:=\sum_{k\in N_0}\Delta_{\sigma^k}$\,.
\item\label{thm:fnc:2} There exist nonempty subsets $B_1\,,\ldots,B_{d-1}\subset[d]$ satisfying the following conditions:
\begin{enumerate}[label=(\alph*),ref=(\alph*)]
\item\label{thm:fnc:a} $\alpha^\top\mathbf{e}_{B_j}=\sum_{i\in B_j}\alpha_i=0$ for every $j\in[d-1]$;
\item\label{thm:fnc:b} the indicator vectors $\mathbf{e}_{B_j}$\,, $j\in[d-1]$, are linearly independent;
\item\label{thm:fnc:c} $B_1\,,\ldots,B_{d-1}$ can be partitioned into at most $n+1$ classes such that any two subsets in the same class are disjoint; equivalently, $\chi(\Gamma(B_1\,,\ldots,B_{d-1}))\leq n+1$.
\end{enumerate}
\end{enumerate}
Moreover, if \ref{thm:fnc:2} holds with classes $J_0\,,\ldots,J_n\subset[d-1]$ (some possibly empty), then the permutations in \ref{thm:fnc:1} can be chosen so that $\mathbf{e}_{B_j}\in\dir F_{\Delta_{\sigma^k}}(\alpha)$ for every $k\in N_0$ and every $j\in J_k$\,.
\end{theorem}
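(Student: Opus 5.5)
The plan is to prove both directions through \cref{lem:dimofFMP}, which reduces being a facet normal to a rank condition on maximizing blocks.

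\emph{\ref{thm:fnc:1}$\Rightarrow$\ref{thm:fnc:2}.} Suppose $\alpha$ is a facet normal of $Q$. By \cref{lem:dimofFMP}(c), the family $\{\mathbf{e}_{B_{k,j}(\alpha)}:k\in N_0,\ 1\le j\le m_k(\alpha)\}$ has rank $d-1$. I will extract a linearly independent subfamily of size $d-1$ and call its members $B_1,\ldots,B_{d-1}$. Condition \ref{thm:fnc:a} holds because each block indicator is orthogonal to $\alpha$, by \cref{lem:dimofFMP}(b). Condition \ref{thm:fnc:b} holds by construction. For \ref{thm:fnc:c}, I color each chosen block by the index $k$ of the permutation it came from. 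Blocks of the same $\sigma^k$ are pairwise disjoint by \cref{lem:dimofFMP}(b), so this gives at most $n+1$ classes of pairwise disjoint sets.

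\emph{\ref{thm:fnc:2}$\Rightarrow$\ref{thm:fnc:1}, together with the ``moreover'' clause.} Fix the classes $J_0,\ldots,J_n$. For each $k$ I will build a permutation $\sigma^k$ in which every $B_j$ with $j\in J_k$ is a union of consecutive maximizing blocks. Then $\mathbf{e}_{B_j}$ is a sum of block indicators, so $\mathbf{e}_{B_j}\in\dir F_{\Delta_{\sigma^k}}(\alpha)$. Let $W=\bigcup_{j\in J_k}B_j$, which is a disjoint union, and let $U=[d]\setminus W$. Split $U$ into $U^-=\{i\in U:\alpha_i<0\}$ and $U^+=\{i\in U:\alpha_i\ge0\}$, and put $S^+=\sum_{i\in U^+}\alpha_i\ge0$. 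The permutation $\sigma^k$ lists, in order:
\begin{enumerate}[label=(\roman*)]
\item the elements of $U^-$;
\item the sets $B_j$, $j\in J_k$, one after another, each internally ordered with its elements of positive $\alpha$-value first and the remaining elements afterwards;
\item the elements of $U^+$.
\end{enumerate}

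Next I check the suffix sums $s^k_r(\alpha)$ against $S^+$.
\begin{itemize}
\item A suffix starting inside $U^+$ sums to a partial sum of nonnegative numbers, hence lies between $0$ and $S^+$; the empty suffix gives $0\le S^+$.
\item A suffix starting at the boundary between two consecutive $B_j$ sums to exactly $S^+$, because every $B_j$ has zero $\alpha$-sum.
\item A suffix starting strictly inside some $B_j$ sums to $S^+$ minus the positive entries of $B_j$ already passed, which is at most $S^+$. This is the point of putting the positive entries first within each block.
\item A suffix starting inside $U^-$ sums to $S^+$ plus some negative entries, hence is at most $S^+$.
\end{itemize}
So $h_k(\alpha)=S^+$, and this maximum is attained at every boundary between consecutive $B_j$'s. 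It may also be attained at extra positions, for example next to entries with $\alpha_i=0$. That only refines the maximizing blocks, and each $B_j$ remains a union of consecutive blocks.

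To conclude, apply \cref{lem:dimofFMP}(c). The span of all block indicators contains every $\mathbf{e}_{B_j}$, $j\in[d-1]$, so by \ref{thm:fnc:b} its rank is at least $d-1$. Since $\alpha\neq0$ the rank is also at most $d-1$, so $\alpha$ is a facet normal of $Q$.

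The main obstacle is this construction step. The leftover coordinates in $U$ can shift the suffix sums away from $0$, so blocks placed naively need not be maximizing. Sending the negative leftovers to the front and the nonnegative ones to the back fixes this, because it makes the common value $S^+$ at the block boundaries the global maximum.
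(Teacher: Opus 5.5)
Your proposal is correct and follows essentially the same route as the paper: in one direction you take $d-1$ independent maximizing blocks and color them by their permutation, and in the other you use the same three-part permutation (negative leftovers, then the sets $B_j$ with positive entries first, then nonnegative leftovers), so that both endpoints of each $B_j$ attain the maximum $S^+$. One small slip: a suffix that starts inside $B_j$ after its positive entries does not sum to exactly $S^+$ minus the positive entries passed; its sum is $S^+$ plus a sum of nonpositive entries, so the bound $\le S^+$ still holds, and this is exactly the paper's case split.
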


\begin{proof}
    \emph{\ref{thm:fnc:1} implies \ref{thm:fnc:2}.} Suppose that $\alpha$ is a facet normal of $Q$. By \cref{lem:dimofFMP},
    \[
    \dir F_Q(\alpha)=\Span\{\mathbf{e}_{B_{k,j}(\alpha)}:k\in N_0\,,\ 1\leq j\leq m_k(\alpha)\},
    \]
    and this space has dimension $d-1$. Choose $d-1$ linearly independent vectors in this family, and let $B_1\,,\ldots,B_{d-1}$ be the corresponding maximizing blocks. Each $B_j$ is nonempty and satisfies $\alpha^\top\mathbf{e}_{B_j}=0$ by \cref{lem:dimofFMP}, so \ref{thm:fnc:a} and \ref{thm:fnc:b} hold. For \ref{thm:fnc:c}, let $J_k$ be the set of $j\in[d-1]$ such that $B_j$ is a maximizing block of $\sigma^k$ (if a block occurs for several $k$, assign $j$ to one of them). Two subsets $B_j\neq B_{j'}$ with $j,j'\in J_k$ are distinct maximizing blocks of the same permutation $\sigma^k$, hence disjoint by \cref{lem:dimofFMP}; and there are at most $n+1$ nonempty classes.

    \smallskip
    \emph{\ref{thm:fnc:2} implies \ref{thm:fnc:1}: the permutations.} Let $B_1\,,\ldots,B_{d-1}$ satisfy \ref{thm:fnc:a}--\ref{thm:fnc:c}, with classes $J_0\,,\ldots,J_n$\,, some possibly empty. Fix $k\in N_0$ and let $T_k:=[d]\setminus\bigcup_{j\in J_k}B_j$\,. Because the subsets $B_j$\,, $j\in J_k$\,, are pairwise disjoint, they partition $[d]\setminus T_k$\,. Let $\sigma^k$ be a permutation of $[d]$ that lists
    \begin{enumerate}[label=(\roman*)]
    \item first, the elements $i\in T_k$ with $\alpha_i<0$;
    \item next, the subsets $B_j$\,, $j\in J_k$\,, one after another, listing within each $B_j$ first the elements $i$ with $\alpha_i>0$, then those with $\alpha_i=0$, then those with $\alpha_i<0$;
    \item finally, the elements $i\in T_k$ with $\alpha_i\geq0$.
    \end{enumerate}
    The order within each of the three parts, within each sign class, and among the subsets in (ii) is arbitrary. Thus each $B_j$\,, $j\in J_k$\,, occupies a set of consecutive positions $a_j+1,\ldots,b_j$ of $\sigma^k$, that is, $B_j=\{\sigma^k_{a_j+1},\ldots,\sigma^k_{b_j}\}$, and these position intervals are pairwise disjoint. If $J_k=\varnothing$, then $\sigma^k$ lists the $i$ with $\alpha_i<0$ before the $i$ with $\alpha_i\geq0$.

    \smallskip
    \emph{An upper bound on $\Delta_{\sigma^k}$\,.} Recall that $s^k_r(\alpha) = \alpha^\top v^k_r=\sum_{t=r+1}^{d}\alpha_{\sigma^k_t}$ for $r\in[d_0]$, and put $\beta_k:=\sum_{i\in T_k,\,\alpha_i\geq0}\alpha_i$\,; we show that $\beta_k=h_k(\alpha)$. First, $\alpha^\top v^k_r\leq \beta_k$ for all $r\in[d_0]$. Split the sum $\sum_{t=r+1}^{d}\alpha_{\sigma^k_t}$ according to the parts (i)--(iii) of $\sigma^k$. The terms from part (i) are negative, so their sum is at most $0$; the terms from part (iii) are nonnegative, so their sum is at most $\beta_k$\,. For a subset $B_j$\,, $j\in J_k$\,, in positions $a_j+1,\ldots,b_j$\,, the terms from $B_j$ form a suffix of its listing: all of $B_j$\,, with sum $0$ by \ref{thm:fnc:a}, or nothing, or a proper nonempty suffix. In the last case, if the suffix contains a positive entry, then the complementary prefix consists of positive entries, because these come first within $B_j$\,, so by \ref{thm:fnc:a} the suffix sum is the negative of a positive number; otherwise the suffix has no positive entry. So the terms from each $B_j$ sum to at most $0$, and $\alpha^\top v^k_r\leq \beta_k$\,. Equality holds when $r$ is the number of entries in part (i), because the suffix then consists of the subsets $B_j$\,, $j\in J_k$\,, each with zero $\alpha$-sum, and of part (iii); hence $\beta_k=h_k(\alpha)$ by \cref{lem:dimofFMP}(a).

    \smallskip
    \emph{Attaining the bound.} Fix $j\in J_k$\,. For $r\in\{a_j,b_j\}$, the sum $\sum_{t=r+1}^{d}\alpha_{\sigma^k_t}$ contains no term from part (i), all terms from part (iii), and, from each $B_{j'}$ with $j'\in J_k$\,, either all terms $\alpha_{\sigma^k_t}$, $a_{j'}<t\leq b_{j'}$\,, or none of them, because the position intervals are pairwise disjoint. The terms from each $B_{j'}$ sum to $0$ by \ref{thm:fnc:a}, so $\alpha^\top v^k_{a_j}=\alpha^\top v^k_{b_j}=\beta_k$\,. By \cref{lem:dimofFMP}, $v^k_{a_j},v^k_{b_j}\in F_{\Delta_{\sigma^k}}(\alpha)$, and therefore $v^k_{a_j}-v^k_{b_j}=\sum_{t=a_j+1}^{b_j}\mathbf{e}_{\sigma^k_t}=\mathbf{e}_{B_j}\in\dir F_{\Delta_{\sigma^k}}(\alpha)$,
    which is the last assertion of the theorem.

    \smallskip
    \emph{Conclusion.} By \eqref{eq:minkowskiadd} and the additivity of direction spaces (\cref{subsec:support}), $\dir F_Q(\alpha)=\sum_{k\in N_0}\dir F_{\Delta_{\sigma^k}}(\alpha)$.
    Every $j\in[d-1]$ lies in some $J_k$\,, so $\mathbf{e}_{B_1}\,,\ldots,\mathbf{e}_{B_{d-1}}\in\dir F_Q(\alpha)$, and these vectors are linearly independent by \ref{thm:fnc:b}. Hence $\dim F_Q(\alpha)\geq d-1$. On the other hand, $F_Q(\alpha)$ lies in the hyperplane $\{x\in\mathbb{R}^d:\alpha^\top x=h_Q(\alpha)\}$, so $\dim F_Q(\alpha)\leq d-1$. As $Q$ is full dimensional (\cref{subsec:support}), $\alpha$ is a facet normal of $Q$. This proves \ref{thm:fnc:1}.
\end{proof}

We comment on the statement of \cref{thm:facetnormalcharacterization}. Because $\dim\alpha^\perp=d-1$, conditions \ref{thm:fnc:a} and \ref{thm:fnc:b} together say that $\mathbf{e}_{B_1}\,,\ldots,\mathbf{e}_{B_{d-1}}$ form a basis of $\alpha^\perp$. The classes in \ref{thm:fnc:c} are the color classes of a proper coloring of $\Gamma(B_1\,,\ldots,B_{d-1})$ with $n+1$ colors, which is the form in which \ref{thm:fnc:c} is used in \cref{cor:anysum}. We use three consequences throughout. First, conditions \ref{thm:fnc:a}--\ref{thm:fnc:c} are unchanged when $\alpha$ is multiplied by a nonzero scalar, possibly negative, and no integrality of $\alpha$ is needed; so the same holds for the least number of summands defined below, and from \cref{lem:crossproduct} on, whenever we bound the entries of $\alpha$, we normalize it to be a primitive integer vector. Second, classes may be empty, so if \ref{thm:fnc:2} holds for $n$, it holds for every larger $n$; correspondingly, by \cref{lem:persistenceaddition}, a facet normal of a sum of $n+1$ staircase simplices is one of every sum with further summands. Third, because $\Delta_\sigma+\Delta_\sigma=2\Delta_\sigma$ has the same normal fan as $\Delta_\sigma$\,, and the normal fan of a Minkowski sum is the common refinement of the normal fans of its summands (see \cite[Proposition~7.12]{ziegler}), repeated summands may be discarded: statement \ref{thm:fnc:1} is equivalent to \emph{$\alpha$ is a facet normal of the sum of at most $n+1$ distinct staircase simplices.}

\begin{example}\label{ex:facetnormalconstruction}
    Let $d=3$.
    \begin{enumerate}[label=(\roman*)]
        \item For $\alpha=(1,-1,0)$, the disjoint subsets $\{1,2\}$ and $\{3\}$ satisfy \ref{thm:fnc:a}--\ref{thm:fnc:c} with one class; the construction gives $\sigma^0=(1,2,3)$, and indeed $x_1-x_2\leq0$ describes a facet of $\Delta_{(1,2,3)}$\,.
        \item For $\alpha=(1,1,-1)$, the subsets with $\alpha$-sum zero are $\{1,3\}$ and $\{2,3\}$, with linearly independent indicator vectors. They intersect, so $\chi(\Gamma)=2$ and two summands are needed. With $B_1=\{1,3\}$ in class $0$ and $B_2=\{2,3\}$ in class $1$, the construction gives
        \[
        \sigma^0=(1,3,2),\qquad \sigma^1=(2,3,1).
        \]
        For $\sigma^0$ the values $\alpha^\top v^0_r$\,, $r=0,1,2,3$, are $1,0,1,0$, so $h_0(\alpha)=1$ is attained at $v^0_0$ and $v^0_2$\,, with $v^0_0-v^0_2=\mathbf{e}_{\{1,3\}}$; symmetrically, $\mathbf{e}_{\{2,3\}}\in\dir F_{\Delta_{\sigma^1}}(\alpha)$. Hence $\alpha$ is a facet normal of $\Delta_{\sigma^0}+\Delta_{\sigma^1}$\,. Because $\alpha$ is not a positive multiple of any $\pm\mathbf{e}_i$ or $\mathbf{e}_i-\mathbf{e}_j$\,, it is not a facet normal of any single staircase simplex. Both $\sigma^0$ and $\sigma^1$ are linear extensions of the antichain on $[3]$, illustrating the sufficient condition in \cref{sec:orderpolytopes}; for that poset, the same vector and the same two blocks appear in \CP{\CPexample}, where they give a vertical facet of the Cayley embedding that is not described by lifting a facet-describing inequality of any single simplex.
        \item For $\alpha=(1,1,-2)$, the only nonempty zero-sum subset is $\{1,2,3\}$, so \ref{thm:fnc:b} fails, and $\alpha$ is a facet normal of no sum of staircase simplices.
    \end{enumerate}
\end{example}

The entries of a facet normal are bounded by minors of the indicator matrix of the subsets in \cref{thm:facetnormalcharacterization}; we recall the statement from \CP{\CPcrossproduct}.

\begin{lemma}[{Facet normals as generalized cross products, \CP{\CPcrossproduct}}]\label{lem:crossproduct}
    Let $B_1\,,\ldots,B_{d-1}\subset[d]$ be nonempty subsets whose indicator vectors are linearly independent. Let $A$ be the $(d-1)\times d$ matrix with rows $\mathbf{e}_{B_j}^\top$\,, let $\widehat A^i$ be obtained from $A$ by deleting column $i$, and let $\gamma_i:=(-1)^{i-1}\det\widehat A^i$ for $i\in[d]$ (the determinant of the empty matrix being one); $\gamma(B_1\,,\ldots,B_{d-1}):=\gamma$ is the \emph{generalized cross product} of $\mathbf{e}_{B_1}\,,\ldots,\mathbf{e}_{B_{d-1}}$\,. Then $\gamma\in\mathbb{Z}^d\setminus\{0\}$ and $\ker A=\Span\{\gamma\}$. Consequently, if $\alpha\in\mathbb{R}^d\setminus\{0\}$ satisfies $\sum_{i\in B_j}\alpha_i=0$ for all $j\in[d-1]$, then $\alpha=t\gamma$ for some $t\neq0$; if moreover $\alpha$ is a primitive integer vector, then $\alpha=\pm\gamma/g$ with $g:=\gcd(\gamma_1\,,\ldots,\gamma_d)$, and in particular $|\alpha_i|\leq|\det\widehat A^i|$ for every $i\in[d]$.
\end{lemma}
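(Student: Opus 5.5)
The plan is to prove this with standard linear algebra: a rank count for $A$, a cofactor expansion for $A\gamma=0$, and a primitivity argument for the final bound.

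\emph{Step 1: $\gamma$ is a nonzero integer vector.} Since the rows $\mathbf{e}_{B_j}^\top$ are linearly independent, $A$ has rank $d-1$. Hence some $(d-1)\times(d-1)$ minor $\det\widehat A^i$ is nonzero, and so $\gamma\neq0$. The entries of $A$ are in $\{0,1\}$, so every $\det\widehat A^i$ is an integer, and $\gamma\in\mathbb{Z}^d$. The degenerate case $d=1$ is covered by the empty-determinant convention, although $d\ge2$ throughout the paper.

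\emph{Step 2: $\gamma$ spans $\ker A$.} Fix a row $j$ and form the $d\times d$ matrix $M_j$ obtained by placing the row $\mathbf{e}_{B_j}^\top$ on top of $A$. Expanding $\det M_j$ along its first row gives
\[
\textstyle\det M_j=\sum_{i\in[d]}(-1)^{1+i}(\mathbf{e}_{B_j})_i\det\widehat A^i=\sum_{i\in B_j}\gamma_i=(A\gamma)_j\,.
\]
Since $M_j$ has two equal rows, $\det M_j=0$, so $A\gamma=0$. By rank--nullity, $\dim\ker A=d-(d-1)=1$. Together with $\gamma\neq0$, this gives $\ker A=\Span\{\gamma\}$.

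\emph{Step 3: the consequences.} The hypothesis $\sum_{i\in B_j}\alpha_i=0$ for all $j$ says exactly that $A\alpha=0$. So $\alpha=t\gamma$ for some scalar $t$, and $t\neq0$ because $\alpha\neq0$. Now suppose $\alpha$ is a primitive integer vector. The vector $\gamma':=\gamma/g$ is also a primitive integer vector, and $\alpha=tg\,\gamma'$. Choose an index with $\gamma'_i\neq0$; then $tg=\alpha_i/\gamma'_i\in\mathbb{Q}$. Write $tg=p/q$ in lowest terms with $q>0$. From $q\alpha=p\gamma'$ and the primitivity of $\gamma'$, we get $q\mid p$, so $q=1$. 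Symmetrically, primitivity of $\alpha$ forces $p\mid1$, so $p=\pm1$. Hence $\alpha=\pm\gamma/g$, and therefore $|\alpha_i|=|\gamma_i|/g\le|\gamma_i|=|\det\widehat A^i|$.

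I expect no real obstacle here. The only point needing care is getting the sign convention $(-1)^{i-1}$ to agree with the cofactor expansion along the first row, and that sign does not affect the conclusion $A\gamma=0$ in any case.
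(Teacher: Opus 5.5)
Your proof is correct. It is the standard argument: the first-row cofactor expansion of the matrix with a repeated row gives $A\gamma=0$, the rank of $A$ gives $\gamma\neq0$ and $\ker A=\Span\{\gamma\}$, and a gcd/primitivity argument gives $\alpha=\pm\gamma/g$. The paper does not reprove the lemma here but cites it from \cite{QL_orderpolytope}, and your write-up supplies the linear algebra it relies on.

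One small correction to your closing remark: only a global sign flip of $\gamma$ is harmless. The alternating factor $(-1)^{i-1}$ itself is essential for $A\gamma=0$, and you did correctly check that it matches $(-1)^{1+i}$.
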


For a nonzero vector $\alpha\in\mathbb{R}^d$, let $\kappa(\alpha)$ be the least number of staircase simplices, repetitions allowed, whose Minkowski sum has facet normal $\alpha$, and set $\kappa(\alpha):=\infty$ if no such collection exists.

\begin{corollary}\label{cor:anysum}
    Let $d\geq2$ and $\alpha\in\mathbb{R}^d\setminus\{0\}$. Then $\kappa(\alpha)<\infty$, that is, $\alpha$ is a facet normal of some Minkowski sum of staircase simplices, if and only if the vectors $\mathbf{e}_B$ with $\varnothing\neq B\subset[d]$ and $\alpha^\top\mathbf{e}_B=0$ span $\alpha^\perp$. If so, then
    \[
    \kappa(\alpha)=\min\Bigl\{\chi\bigl(\Gamma(B_1\,,\ldots,B_{d-1})\bigr):\ B_1\,,\ldots,B_{d-1}\subset[d]\text{ nonempty, satisfying \ref{thm:fnc:a} and \ref{thm:fnc:b}}\Bigr\},
    \]
    where \ref{thm:fnc:a} and \ref{thm:fnc:b} are the conditions of \cref{thm:facetnormalcharacterization}. Moreover, in this case $\kappa(\alpha)\leq d-1$, and this bound is attained, for instance by $\alpha=(1,-1,\ldots,-1)$.
\end{corollary}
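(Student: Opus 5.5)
The corollary follows almost directly from \cref{thm:facetnormalcharacterization}, and I will derive it in three steps: the finiteness criterion, the formula together with the bound $d-1$, and sharpness.

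\emph{Finiteness criterion.} If $\kappa(\alpha)<\infty$, statement~\ref{thm:fnc:1} holds for some $n$. Hence there are $d-1$ zero-sum sets with linearly independent indicators, and these span $\alpha^\perp$ because $\dim\alpha^\perp=d-1$. Conversely, suppose the zero-sum indicators span $\alpha^\perp$. Then I can extract a basis $\mathbf{e}_{B_1},\ldots,\mathbf{e}_{B_{d-1}}$, which satisfies \ref{thm:fnc:a} and \ref{thm:fnc:b}. Condition \ref{thm:fnc:c} holds trivially with $n=d-2$, since we may put each set in its own class. So $\kappa(\alpha)\le d-1$.

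\emph{Formula.} If $\alpha$ is a facet normal of a sum of $n+1$ simplices, the theorem yields a family satisfying \ref{thm:fnc:a} and \ref{thm:fnc:b} with $\chi(\Gamma)\le n+1$, so the minimum is at most $\kappa(\alpha)$. Conversely, a family attaining the minimum chromatic number $c$ gives \ref{thm:fnc:2} with $n+1=c$, so $\kappa(\alpha)\le c$.

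\emph{Sharpness for $\alpha=(1,-1,\ldots,-1)$.} A nonempty zero-sum set $B$ must satisfy $\#\{i\in B:\alpha_i=1\}=\#\{i\in B:\alpha_i=-1\}$. Since coordinate $1$ is the only positive one, $B=\{1,i\}$ with $i\ge2$. There are exactly $d-1$ such sets, and their indicators are independent: they have distinct second coordinates. So they span $\alpha^\perp$ and $\kappa(\alpha)<\infty$. Any admissible family must consist of all of these sets. They pairwise intersect in $\{1\}$, so $\Gamma$ is the complete graph $K_{d-1}$ with chromatic number $d-1$. By the formula, $\kappa(\alpha)=d-1$.

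There is no real obstacle. The only care point is noting that any spanning set of $\alpha^\perp$ contains a basis, and that this basis automatically satisfies \ref{thm:fnc:a}.
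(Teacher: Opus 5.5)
Your proposal is correct and follows essentially the same route as the paper. Both arguments extract a basis of zero-sum indicators and use singleton classes to get finiteness and $\kappa(\alpha)\le d-1$, obtain the formula by applying the equivalence of statements (1) and (2) of \cref{thm:facetnormalcharacterization} for each $n$, and prove sharpness by noting that the only zero-sum sets for $(1,-1,\ldots,-1)$ are the $d-1$ sets $\{1,j\}$, whose intersection graph is complete.
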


\begin{proof}
    A nonzero $0/1$ vector is $\mathbf{e}_B$ for a nonempty $B\subset[d]$, and it lies in $\alpha^\perp$ exactly when $\alpha^\top\mathbf{e}_B=\sum_{i\in B}\alpha_i=0$. If \ref{thm:fnc:1} of \cref{thm:facetnormalcharacterization} holds for some $n$, then so does \ref{thm:fnc:2}, and its conditions \ref{thm:fnc:a} and \ref{thm:fnc:b} give $d-1$ linearly independent such vectors $\mathbf{e}_{B_1}\,,\ldots,\mathbf{e}_{B_{d-1}}$ in the $(d-1)$-dimensional space $\alpha^\perp$, which therefore span it. Conversely, if these vectors span $\alpha^\perp$, choose $d-1$ linearly independent ones $\mathbf{e}_{B_1}\,,\ldots,\mathbf{e}_{B_{d-1}}$\,; they satisfy \ref{thm:fnc:a} and \ref{thm:fnc:b}, and \ref{thm:fnc:c} holds with the $d-1$ classes $\{1\},\ldots,\{d-1\}$, that is, with $n+1=d-1$.

    For each $n\geq0$, statement \ref{thm:fnc:2} of \cref{thm:facetnormalcharacterization} holds if and only if some nonempty $B_1\,,\ldots,B_{d-1}\subset[d]$ satisfying \ref{thm:fnc:a} and \ref{thm:fnc:b} have $\chi(\Gamma(B_1\,,\ldots,B_{d-1}))\leq n+1$, because the classes in \ref{thm:fnc:c} are the color classes of a proper coloring of $\Gamma(B_1\,,\ldots,B_{d-1})$ with $n+1$ colors; so the formula for $\kappa(\alpha)$ is the equivalence of \ref{thm:fnc:1} and \ref{thm:fnc:2} applied to each $n$. The bound $\kappa(\alpha)\leq d-1$ is the case of the $d-1$ classes $\{1\},\ldots,\{d-1\}$ above. For $\alpha=(1,-1,\ldots,-1)$, a subset $B\subset[d]$ has $\alpha^\top\mathbf{e}_B=|B\cap\{1\}|-|B\setminus\{1\}|$, so the nonempty subsets with $\alpha^\top\mathbf{e}_B=0$ are exactly $\{1,j\}$, $2\leq j\leq d$. Their indicator vectors $\mathbf{e}_1+\mathbf{e}_j$ are linearly independent (the $j$-th coordinate of $\mathbf{e}_1+\mathbf{e}_j$ is $1$, and that of $\mathbf{e}_1+\mathbf{e}_{j'}$ is $0$ for $j'\neq j$), so every family satisfying \ref{thm:fnc:a} and \ref{thm:fnc:b} consists of all $d-1$ of them; any two of them intersect in $\{1\}$, so $\Gamma(\{1,2\},\ldots,\{1,d\})$ is the complete graph on $[d-1]$ and its chromatic number is $d-1$.
\end{proof}

The criterion in \cref{cor:anysum} is finite: it is a rank computation over at most $2^d-1$ vectors. The value $\kappa((1,-1,\ldots,-1))=d-1$ is also a special case of \cref{thm:kappa} in \cref{sec:kappa}, with $p_{\min}=1$ and $p_{\max}=d-1$.

Removing zero coordinates does not change the least number of summands. For the sum $M(P)$ over the linear extensions of a fixed poset, \CP{\CPdeletezeros} gives the analogous reduction, under a hypothesis on singleton maximizing blocks that is not needed here, because the summands are unrestricted.

\begin{lemma}[Zero padding]\label{lem:zeropadding}
    Let $d\geq2$, let $\alpha\in\mathbb{R}^d\setminus\{0\}$, and let $S\subset[d]$ with $|S|\geq2$ and $\alpha_i=0$ for all $i\in[d]\setminus S$. Let $\alpha_S:=(\alpha_i)_{i\in S}\in\mathbb{R}^S$ be the restriction of $\alpha$ to $S$, where $\mathbb{R}^S$ is identified with $\mathbb{R}^{|S|}$ by listing $S$ in increasing order. Then $\kappa(\alpha)=\kappa(\alpha_S)$.
\end{lemma}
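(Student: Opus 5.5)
We prove $\kappa(\alpha)=\kappa(\alpha_S)$ through the formula of \cref{cor:anysum}, by turning families for $\alpha_S$ into families for $\alpha$ and back without increasing the chromatic number. Both $\alpha$ and $\alpha_S$ are nonzero, since $\alpha$ vanishes off $S$. Put $Z:=[d]\setminus S$ and $s:=|S|$. Every singleton $\{i\}$ with $i\in Z$ has zero $\alpha$-sum, so zero coordinates contribute cheap, pairwise disjoint blocks. We may assume $Z\neq\varnothing$; otherwise there is nothing to prove.

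\emph{Step 1: $\kappa(\alpha)\le\kappa(\alpha_S)$.} Take a family $B_1,\ldots,B_{s-1}\subset S$ satisfying \ref{thm:fnc:a} and \ref{thm:fnc:b} for $\alpha_S$, with a proper coloring of $\Gamma$ using $c=\kappa(\alpha_S)$ colors. Add the singletons $\{i\}$, $i\in Z$, which gives $s-1+|Z|=d-1$ sets. Each has zero $\alpha$-sum. The indicators are linearly independent, because the old ones live in the coordinates $S$ and the new ones are distinct unit vectors in the coordinates $Z$. Each singleton $\{i\}$ is disjoint from every other set, so it is an isolated vertex of $\Gamma$ and can receive any existing color. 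The chromatic number therefore stays $c$; since $c\ge1$, a color is always available. \cref{cor:anysum} then gives $\kappa(\alpha)\le c$. The case $\kappa(\alpha_S)=\infty$ is trivial.

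\emph{Step 2: $\kappa(\alpha_S)\le\kappa(\alpha)$.} This is the main obstacle. Take a family $B_1,\ldots,B_{d-1}\subset[d]$ satisfying \ref{thm:fnc:a} and \ref{thm:fnc:b} for $\alpha$, with a proper $c$-coloring. Restrict each set to $S$: put $B_j':=B_j\cap S$. Then $\alpha_S$ has zero sum on $B_j'$, because the removed coordinates have $\alpha_i=0$. Restriction only shrinks sets, so any two sets disjoint before remain disjoint, and the same coloring is still proper on the nonempty $B_j'$.

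It remains to extract $s-1$ linearly independent nonempty sets. The projection $\pi\colon\mathbb{R}^d\to\mathbb{R}^S$ maps $\alpha^\perp$ onto $\alpha_S^\perp$. It is surjective because $\alpha^\perp\supset\mathbb{R}^Z$ and $\alpha^\perp=\alpha_S^\perp\times\mathbb{R}^Z$. The vectors $\mathbf{e}_{B_j}$ span $\alpha^\perp$, so their images $\mathbf{e}_{B_j'}$ span $\alpha_S^\perp$, which has dimension $s-1$. Choose $s-1$ linearly independent vectors among the images; they are automatically nonzero, so the corresponding $B_j'$ are nonempty. Their intersection graph is an induced subgraph of the restricted one, so it is properly colored with at most $c$ colors. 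By \cref{cor:anysum}, $\kappa(\alpha_S)\le c$.

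Finiteness transfers as well. If $\kappa(\alpha)=\infty$, then Step 1 shows $\kappa(\alpha_S)=\infty$; conversely, Step 2 shows that $\kappa(\alpha)<\infty$ implies $\kappa(\alpha_S)<\infty$. The hypothesis $|S|\ge2$ ensures that $\alpha_S$ lives in dimension at least $2$, where \cref{cor:anysum} applies.
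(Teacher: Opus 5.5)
Your proof is correct and matches the paper's. In one direction you pad with the singletons $\{i\}$, $i\in[d]\setminus S$, which are disjoint from every other set. In the other you project the sets onto the coordinates in $S$ and extract $|S|-1$ linearly independent images, with the coloring inherited. The only difference is cosmetic: you argue through the chromatic-number formula of \cref{cor:anysum}, whereas the paper works with statement (2) of \cref{thm:facetnormalcharacterization} for each $n$, and the two are equivalent.
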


\begin{proof}
    Put $s:=|S|$ and $Z:=[d]\setminus S$; then $\alpha_S\neq0$, and $\alpha^\top\mathbf{e}_B=\alpha_S^\top\mathbf{e}_{B\cap S}$ for every $B\subset[d]$, where $\mathbf{e}_{B\cap S}\in\mathbb{R}^S$. We show that, for each $n$, statement \ref{thm:fnc:2} of \cref{thm:facetnormalcharacterization} holds for $\alpha$ if and only if it holds for $\alpha_S$\,; this gives $\kappa(\alpha)=\kappa(\alpha_S)$.

    If $B_1\,,\ldots,B_{s-1}\subset S$ satisfy \ref{thm:fnc:a}--\ref{thm:fnc:c} for $\alpha_S$ with classes $J_0\,,\ldots,J_n$\,, add the singletons $\{i\}$, $i\in Z$, to $J_0$\,. The resulting $d-1$ subsets have zero $\alpha$-sum, the new ones are disjoint from each other and from every $B_j$\,, and their indicator vectors are linearly independent, because a coordinate $i\in Z$ occurs only in $\mathbf{e}_i$\,.

    Conversely, if $B'_1\,,\ldots,B'_{d-1}$ satisfy \ref{thm:fnc:a}--\ref{thm:fnc:c} for $\alpha$, their indicator vectors span $\alpha^\perp$. The coordinate projection $\pi:\mathbb{R}^d\to\mathbb{R}^S$ is surjective and $\alpha^\top x=\alpha_S^\top\pi(x)$, so $\pi(\alpha^\perp)=\alpha_S^\perp$, and the vectors $\pi(\mathbf{e}_{B'_j})=\mathbf{e}_{B'_j\cap S}$ span the $(s-1)$-dimensional space $\alpha_S^\perp$. Choose $s-1$ linearly independent ones; the corresponding sets $B'_j\cap S$ are nonempty, have zero $\alpha_S$-sum, and, with the induced classes, inherit the disjointness in \ref{thm:fnc:c}.
\end{proof}

\begin{remark}\label{rem:alphafromblocks}
    Since $\alpha$ may be replaced by $-\alpha$ in \cref{thm:facetnormalcharacterization}, $-\alpha$ is a facet normal of some sum of $n+1$ staircase simplices whenever $\alpha$ is, though not necessarily of the same one. For instance, $\mathbf{e}_i$ is a facet normal of $\Delta_\sigma$ exactly when $\sigma_d=i$, and $-\mathbf{e}_i$ exactly when $\sigma_1=i$, so no single staircase simplex has both, whereas $\Delta_\sigma+\Delta_\tau$ with $\sigma_d=\tau_1=i$ has both.
\end{remark}

\begin{remark}\label{rem:universal}
    All attainable facet-normal rays are realized simultaneously by one polytope, the sum $U_d:=\sum_\sigma\Delta_\sigma$ of all $d!$ staircase simplices. Indeed, if $\kappa(\alpha)<\infty$, then $\alpha$ is a facet normal of a sum of at most $d-1$ \emph{distinct} staircase simplices (\cref{cor:anysum} and the comment after \cref{thm:facetnormalcharacterization}), and adding the remaining staircase simplices preserves its facet-normal ray (\cref{lem:persistenceaddition}); conversely, every facet normal of $U_d$ is attainable. We note, without using it, that $U_d=\sum_{\varnothing\neq S\subset[d]}(|S|-1)!\,(d-|S|)!\,[0,\mathbf{e}_S]$ is a zonotope (a geometric form of the generalized Hunt--Dyson formula, see \cite{Gioev2002}). Its normal fan is the chamber fan of the resonance (all-subset) arrangement $\mathcal{A}_d=\{\mathbf{e}_S^\perp:\varnothing\neq S\subset[d]\}$ \cite{GutekunstMeszarosPetersen2021,Kuhne2023}; see also \cite[Proposition~2.11]{early2025when}. Thus $\kappa(\alpha)<\infty$ exactly when $\alpha$ spans a ray of the chamber fan of $\mathcal{A}_d$ (\cref{cor:anysum}), and \cref{thm:facetnormalcharacterization} determines how many summands each such ray needs.
\end{remark}

\section{Size of facet normals}\label{sec:size}

As in \CP{Section~4.1}, for a nonnegative integer $j$ let $\delta_j$ be the greatest absolute value of the determinant of a $j\times j$ zero-one matrix, with $\delta_0:=1$; this is the zero-one version of Hadamard's maximal determinant problem, with values $1,1,1,2,3,5,9,32,56,144$ for $j=0,\ldots,9$ (see \cite{OEISA003432}). Bordering a matrix by a corner entry $1$ shows that $\delta_j$ is nondecreasing in $j$. We also use the classical Hadamard-type bound for zero-one matrices.

\begin{lemma}[{\cite[Lemma~24]{Ziegler01}; see also \CP{\CPdelta}}]\label{lem:deltahadamard}
    For every integer $j\geq0$, $\delta_j\leq (j+1)^{(j+1)/2}/2^j$.
\end{lemma}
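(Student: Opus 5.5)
The plan is the standard reduction to Hadamard's inequality on $\{\pm1\}$ matrices. Let $M$ be a $j\times j$ zero-one matrix. For $j=0$ the claim reads $\delta_0=1\le 1$, so assume $j\ge1$. We border $M$ to obtain the $(j+1)\times(j+1)$ matrix
\[
M'=\begin{pmatrix}1&0\\ \mathbf{e}&M\end{pmatrix},
\]
with first row $(1,0,\ldots,0)$ and first column all ones. Expanding along the first row gives $\det M'=\det M$.

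Next, replace $M'$ by $N:=2M'-J$, where $J$ is the all-ones matrix, so $N\in\{\pm1\}^{(j+1)\times(j+1)}$. Because the first column of $M'$ is all ones, each column $c$ of $M'$ with $c\ge2$ satisfies $2c-\mathbf{e}=2c-(\text{first column})$. Subtracting the first column of $N$ (which is $\mathbf{e}$) from the other columns therefore shows that $\det N=2^{j}\det M'$, up to the column operations. Concretely, first scale columns $2,\ldots,j+1$ of $M'$ by $2$, which multiplies the determinant by $2^j$. Then subtract the first column from each of them; this leaves the determinant unchanged and yields exactly $N$, whose first column is $\mathbf{e}=2\mathbf{e}-\mathbf{e}$. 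The first column itself needs no scaling. I will double-check this bookkeeping, since it is the only place an error could creep in: the first column of $N$ is all ones, matching the unmodified first column.

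Finally, Hadamard's inequality applied to $N$, whose columns have Euclidean norm $\sqrt{j+1}$, gives $|\det N|\le (j+1)^{(j+1)/2}$. Hence
\[
|\det M|=|\det N|/2^j\le (j+1)^{(j+1)/2}/2^j.
\]
Taking the maximum over $M$ gives the bound on $\delta_j$. No step is a real obstacle; the only care needed is the exponent of $2$ in the column scaling.
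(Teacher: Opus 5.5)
Your proof is correct. The paper gives no proof of its own and only cites \cite[Lemma~24]{Ziegler01}, which is this same standard argument: border $M$ so that $\det M'=\det M$, pass to the $\pm1$ matrix $N=2M'-J$ with $\det N=2^j\det M$ (your column bookkeeping is right), and apply Hadamard's inequality to $N$.
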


The following bound is also recorded in \CP{\CPremdelta}.

\begin{corollary}\label{cor:deltabound}
    Every primitive facet normal $\alpha$ of every Minkowski sum of staircase simplices in $\mathbb{R}^d$ satisfies $\lVert\alpha\rVert_\infty\leq\delta_{d-1}$\,.
\end{corollary}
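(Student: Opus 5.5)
The plan is to combine the direction (1)$\Rightarrow$(2) of \cref{thm:facetnormalcharacterization} with \cref{lem:crossproduct}. Let $\alpha$ be a primitive facet normal of $Q=\sum_{k\in N_0}\Delta_{\sigma^k}$ in $\mathbb{R}^d$. By \cref{lem:dimofFMP} the indicator vectors of the maximizing blocks span $\alpha^\perp$. Hence, exactly as in the proof of \cref{thm:facetnormalcharacterization}, we can select nonempty blocks $B_1,\ldots,B_{d-1}\subset[d]$ with zero $\alpha$-sum whose indicator vectors are linearly independent. No coloring condition is needed for this step.

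Next I apply \cref{lem:crossproduct} to these sets. Let $A$ be the $(d-1)\times d$ zero-one matrix with rows $\mathbf{e}_{B_j}^\top$. Since $\alpha$ is a primitive integer vector orthogonal to every row, the lemma gives $|\alpha_i|\le|\det\widehat A^i|$ for each $i\in[d]$. Each $\widehat A^i$ is a $(d-1)\times(d-1)$ zero-one matrix, so its determinant is at most $\delta_{d-1}$ in absolute value, by the definition of $\delta_{d-1}$ as the maximum absolute determinant of such matrices. Taking the maximum over $i$ yields $\|\alpha\|_\infty\le\delta_{d-1}$.

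There is essentially no obstacle here. The only points to check are that the chosen rows really are zero-one vectors (they are, being indicator vectors) and that primitivity is what lets us pass from $\alpha=t\gamma$ to the entrywise bound. Both are already packaged in \cref{lem:crossproduct}. The edge case $d=2$ is covered by the convention that the empty determinant is one, together with $\delta_1=1$.
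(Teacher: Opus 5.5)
Your proposal is correct and matches the paper's proof: the paper gets the $d-1$ nonempty zero-sum subsets with linearly independent indicators from \cref{cor:anysum}, which rests on the same direction of \cref{thm:facetnormalcharacterization} that you use, and then applies \cref{lem:crossproduct} to get $|\alpha_i|\leq|\det\widehat A^i|\leq\delta_{d-1}$. One small slip: for $d=2$ the matrices $\widehat A^i$ are $1\times1$, not empty, so the empty-determinant convention plays no role and the bound there is simply $\delta_1=1$.
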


\begin{proof}
    Take subsets as in \cref{cor:anysum}, and let $A$ be their $(d-1)\times d$ indicator matrix; by \cref{lem:crossproduct}, $|\alpha_i|\leq|\det\widehat A^i|\leq\delta_{d-1}$\,.
\end{proof}

For integers $m\geq1$ and $d\geq2$, let $\mathfrak{h}_m(d)$ be the maximum of $\lVert\alpha\rVert_\infty$ over all primitive facet normals $\alpha$ of Minkowski sums of $m$ staircase simplices, not necessarily distinct, in $\mathbb{R}^d$; in the notation of \cref{thm:inftynormofalpha}, $m=n+1$. By the comment after \cref{thm:facetnormalcharacterization}, $\mathfrak{h}_m(d)$ is nondecreasing in $m$, and $\alpha$ is a facet normal of a sum of $m$ staircase simplices for every $m\geq\kappa(\alpha)$. Moreover, by \cref{cor:anysum}, $\mathfrak{h}_m(d)$ is constant for $m\geq d-1$; let $\mathfrak{h}(d):=\max_{m\geq1}\mathfrak{h}_m(d)=\mathfrak{h}_{d-1}(d)$. \Cref{cor:deltabound} says $\mathfrak{h}(d)\leq\delta_{d-1}$\,. (The quantity \CPcoeffH{} of \CP{Section~4.2} is the analogous maximum over the posets $P$ on $[d]$ with $\ell(P)=\ell$, where $\ell(P)$ is the least number of elements whose deletion leaves a chain; it depends on the order relations of $P$ rather than on the number of summands, and we use a different letter to keep the two apart. See \cref{sec:orderpolytopes}.)

The quantity $\mathfrak{h}(d)$ is classical. For a full-dimensional $0/1$-polytope in $\mathbb{R}^d$, write each facet-describing inequality with integer coefficients whose greatest common divisor is $1$, and let $\operatorname{coeff}(d)$ be the largest absolute value of a coefficient of $x_1\,,\ldots,x_d$ in such an inequality, over all full-dimensional $0/1$-polytopes in $\mathbb{R}^d$ (see \cite[Section~5]{Ziegler01}).

\begin{proposition}\label{prop:coeff}
    For every $d\geq2$, $\mathfrak{h}(d)=\operatorname{coeff}(d)$.
\end{proposition}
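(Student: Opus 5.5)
We prove the two inequalities $\mathfrak{h}(d)\leq\operatorname{coeff}(d)$ and $\operatorname{coeff}(d)\leq\mathfrak{h}(d)$ separately. Both reduce, through \cref{cor:anysum}, to the same object: a primitive integer vector $\alpha\neq0$ whose orthogonal complement $\alpha^\perp$ is spanned by $0/1$ vectors.

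\emph{Step 1: $\mathfrak{h}(d)\leq\operatorname{coeff}(d)$.} Let $\alpha$ be a primitive facet normal of some sum of staircase simplices. By \cref{cor:anysum} there are nonempty $B_1,\ldots,B_{d-1}$ with zero $\alpha$-sum whose indicator vectors are linearly independent. Consider the $0/1$-polytope
\[
K:=\conv\{0,\mathbf{e}_{B_1},\ldots,\mathbf{e}_{B_{d-1}},v\},
\]
where $v$ is a $0/1$ vector with $\alpha^\top v<0$. For instance, take $v=\mathbf{e}_i$ with $\alpha_i<0$; such an $i$ exists, since some entry of $\alpha$ is negative (otherwise the only zero-sum sets would be contained in the zero set of $\alpha$, and they could not span $\alpha^\perp$). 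The points $0,\mathbf{e}_{B_1},\ldots,\mathbf{e}_{B_{d-1}}$ lie on the hyperplane $\alpha^\top x=0$ and are affinely independent. The point $v$ lies strictly on one side of it. Hence $K$ is a full-dimensional simplex, and $\alpha^\top x\leq0$ is valid for $K$ and describes a facet. Since $\alpha$ is primitive, this inequality has coprime coefficients, so $\|\alpha\|_\infty\leq\operatorname{coeff}(d)$.

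\emph{Step 2: $\operatorname{coeff}(d)\leq\mathfrak{h}(d)$.} Let $a^\top x\leq b$ be a primitive facet inequality of a full-dimensional $0/1$-polytope $K$. The facet contains $d$ affinely independent $0/1$ points $u^0,\ldots,u^{d-1}$. The differences $u^j-u^0$ lie in $a^\perp$ and span it, but they are $\{-1,0,1\}$ vectors rather than $0/1$ vectors, and $b$ may be nonzero. To remove these problems, we apply a symmetry of the cube, namely a reflection $x_i\mapsto1-x_i$ on the coordinates where $u^0_i=1$. This moves $u^0$ to $0$ and maps $0/1$ points to $0/1$ points. The facet normal changes only by sign flips $a_i\mapsto-a_i$, which preserve $\|a\|_\infty$ and primitivity. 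After the reflection, the facet passes through the origin, so $b=0$, and the remaining points $u^1,\ldots,u^{d-1}$ are nonzero $0/1$ vectors that lie in $a'^\perp$ and are linearly independent. Thus the sets $B_j:=\operatorname{supp}(u^j)$ satisfy conditions \ref{thm:fnc:a} and \ref{thm:fnc:b}. By \cref{cor:anysum}, $a'$ is a primitive facet normal of a sum of at most $d-1$ staircase simplices, and $\|a\|_\infty=\|a'\|_\infty\leq\mathfrak{h}_{d-1}(d)=\mathfrak{h}(d)$.

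The main point needing care is Step 2: the normalization by a cube reflection is what turns arbitrary facet vertices into $0/1$ vectors in $a^\perp$, and we must check that it preserves both primitivity and the $\infty$-norm. A secondary check is in Step 1: the inequality $\alpha^\top x\leq 0$ must describe an actual facet of a full-dimensional $0/1$-polytope, which requires choosing $v$ on the correct side of the hyperplane.
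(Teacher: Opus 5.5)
Your route is the paper's: for $\mathfrak{h}(d)\leq\operatorname{coeff}(d)$ you take the simplex spanned by the $0/1$ points $0,\mathbf{e}_{B_1},\ldots,\mathbf{e}_{B_{d-1}}$ of $\alpha^\perp$ and a unit vector. For the reverse inequality you move one facet vertex to the origin by the cube reflection $x_j\mapsto1-x_j$. However, Step 1 contains a false claim. You assert that every facet normal has a negative entry, but your justification fails when $\alpha$ has a single nonzero entry. For $\alpha=\mathbf{e}_i$ the zero set has $d-1$ elements, and the singletons $\{j\}$, $j\neq i$, do span $\alpha^\perp$. Indeed $\mathbf{e}_i$ is a primitive facet normal of $\Delta_\sigma$ when $\sigma_d=i$, and it has no negative entry, so your choice of $v$ does not exist in that case.

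The repair is what the paper does. Take any $i$ with $\alpha_i\neq0$ and put $v=\mathbf{e}_i$. Use $\alpha^\top x\leq0$ if $\alpha_i<0$ and $-\alpha^\top x\leq0$ if $\alpha_i>0$. Since $\operatorname{coeff}(d)$ is defined through absolute values of coefficients, the sign of the inequality does not matter, so the ``secondary check'' you worried about is not a real constraint. (Alternatively, $\alpha=\mathbf{e}_i$ is trivial: $\lVert\alpha\rVert_\infty=1$, and the cube has the facet $x_1\leq1$.)

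In Step 2 you say that sign flips preserve primitivity, but you never show that $a$ itself is primitive. What you are given is an inequality with coprime integer coefficients, $\gcd(a_1,\ldots,a_d,b)=1$ in the paper's normalization. To compare with $\mathfrak{h}(d)$ you need the vector $a'$ to be primitive. This takes one line: $b=a^\top u^0$ is an integer combination of $a_1,\ldots,a_d$, so $\gcd(a_1,\ldots,a_d)=\gcd(a_1,\ldots,a_d,b)=1$. The paper states this explicitly, and you should as well. With these two adjustments your proof is complete and coincides with the paper's.
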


\begin{proof}
    Let $\alpha$ be a primitive facet normal of a Minkowski sum of staircase simplices. By \cref{cor:anysum}, there are nonempty $B_1\,,\ldots,B_{d-1}\subset[d]$ with $\alpha^\top\mathbf{e}_{B_j}=0$ for $j\in[d-1]$ and $\mathbf{e}_{B_1}\,,\ldots,\mathbf{e}_{B_{d-1}}$ linearly independent, so $0,\mathbf{e}_{B_1}\,,\ldots,\mathbf{e}_{B_{d-1}}$ are $d$ affinely independent $0/1$ points in the hyperplane $\alpha^\perp$, and their affine hull is $\alpha^\perp$. Choose $i\in[d]$ with $\alpha_i\neq0$; then $\mathbf{e}_i\notin\alpha^\perp$, so the $0/1$-simplex $\conv\{0,\mathbf{e}_{B_1}\,,\ldots,\mathbf{e}_{B_{d-1}}\,,\mathbf{e}_i\}$ is full dimensional, and its facet $\conv\{0,\mathbf{e}_{B_1}\,,\ldots,\mathbf{e}_{B_{d-1}}\}\subset\alpha^\perp$ is described by $\alpha^\top x\leq0$ if $\alpha_i<0$ and by $-\alpha^\top x\leq0$ if $\alpha_i>0$. Because $\alpha$ is primitive, the coefficients of this inequality are coprime integers, so $\lVert\alpha\rVert_\infty\leq\operatorname{coeff}(d)$.

    Conversely, let $c^\top x\leq c_0$ be a facet-describing inequality of a full-dimensional $0/1$-polytope $\Po\subset\mathbb{R}^d$, with $c\in\mathbb{Z}^d$, $c_0\in\mathbb{Z}$, and $\gcd(c_1\,,\ldots,c_d\,,c_0)=1$. The facet $\Po\cap\{x:c^\top x=c_0\}$ contains $d$ affinely independent vertices $p^0,\ldots,p^{d-1}\in\{0,1\}^d$ of $\Po$. Let $\varphi:\mathbb{R}^d\to\mathbb{R}^d$ be the affine map that replaces $x_j$ by $1-x_j$ for every $j$ with $p^0_j=1$ and leaves the other coordinates unchanged, and let $c'\in\mathbb{Z}^d$ be given by $c'_j:=-c_j$ if $p^0_j=1$ and $c'_j:=c_j$ otherwise. Then $\varphi$ maps $\{0,1\}^d$ onto itself, $\varphi(p^0)=0$, and $(c')^\top\varphi(x)=c^\top x-c^\top p^0=c^\top x-c_0$ for every $x\in\mathbb{R}^d$. Hence $\varphi(p^1),\ldots,\varphi(p^{d-1})$ are $0/1$ vectors in $(c')^\perp$, linearly independent because $\varphi(p^0)=0,\varphi(p^1),\ldots,\varphi(p^{d-1})$ are affinely independent; so they span the $(d-1)$-dimensional space $(c')^\perp$. Because $c_0=c^\top p^0$ is an integer combination of $c_1\,,\ldots,c_d$\,, we have $\gcd(c_1\,,\ldots,c_d)=\gcd(c_1\,,\ldots,c_d\,,c_0)=1$, so $c'$ is primitive. By \cref{cor:anysum}, $c'$ is a facet normal of a Minkowski sum of staircase simplices, and $\lVert c'\rVert_\infty=\max_j|c_j|$. Hence $\mathfrak{h}(d)\geq\operatorname{coeff}(d)$.
\end{proof}

\begin{remark}\label{rem:coeffvalues}
    The values $\operatorname{coeff}(d)=1,2,3,5,9,18$ for $d=3,\ldots,8$ were obtained by complete enumeration, as reported in \cite[Section~5]{Ziegler01}, which also gives the bounds $\operatorname{coeff}(9)\geq42$ and $\operatorname{coeff}(10)\geq96$. Because $\delta_{d-1}=1,2,3,5,9,32$ for $d=3,\ldots,8$, the bound of \cref{cor:deltabound} is attained for $2\leq d\leq7$ (for $d=2$, $\mathfrak{h}(2)=\delta_1=1$) but not for $d=8$. Asymptotically, Alon and V\~u~\cite{AlonVu97} showed that
    \[
    \frac{(d-1)^{(d-1)/2}}{2^{2d+o(d)}}\leq\operatorname{coeff}(d)\leq\delta_{d-1}\leq\frac{d^{d/2}}{2^{d-1}}
    \]
    (see \cite[Corollary~26]{Ziegler01}; the last inequality is \cref{lem:deltahadamard}). By \cref{prop:coeff}, the same holds for $\mathfrak{h}(d)$, so \cref{cor:deltabound} has the correct order $d^{d/2}$ up to a factor $2^{O(d)}$. \Cref{thm:inftynormofalpha} improves on it for few summands.
\end{remark}

\begin{lemma}\label{lem:productbound}
    Let $M$ be a square zero-one matrix, and write $\operatorname{supp}(M_{\ell,\cdot})$ for the set of column indices of the entries $1$ in row $\ell$. Suppose that the row indices are partitioned into sets $\mathcal R$, $\mathcal T_1$\,, and $\mathcal T_2$ such that $\operatorname{supp}(M_{\ell,\cdot})\cap\operatorname{supp}(M_{\ell',\cdot})=\varnothing$ for all $\ell\neq\ell'$ in $\mathcal T_1$ and for all $\ell\neq\ell'$ in $\mathcal T_2$\,. Then $|\det M|\leq\nu$, where $\nu$ is the number of injective maps $\varphi$ from $\mathcal R$ to the column indices with $\varphi(\ell)\in\operatorname{supp}(M_{\ell,\cdot})$ for every $\ell\in\mathcal R$ (so $\nu=1$ if $\mathcal R=\varnothing$). Moreover, $\nu\leq\prod_{\ell\in\mathcal R}|\operatorname{supp}(M_{\ell,\cdot})|$, with equality if the supports of the rows in $\mathcal R$ are pairwise disjoint.
\end{lemma}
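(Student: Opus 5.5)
We expand $\det M$ by the Leibniz formula, $\det M=\sum_{\pi}\operatorname{sgn}(\pi)\prod_\ell M_{\ell,\pi(\ell)}$, where $\pi$ runs over bijections from row indices to column indices. A term is nonzero exactly when $\pi(\ell)\in\operatorname{supp}(M_{\ell,\cdot})$ for every row $\ell$. We group the nonzero terms according to the restriction $\varphi:=\pi|_{\mathcal R}$, which is an injective map of the type counted by $\nu$. The bound $|\det M|\le\nu$ then follows if, for each such $\varphi$, the signed sum over all extensions $\pi$ of $\varphi$ has absolute value at most $1$.

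\textbf{Rows in $\mathcal T_1\cup\mathcal T_2$ for fixed $\varphi$.} Let $C:=\text{columns}\setminus\varphi(\mathcal R)$. The extensions of $\varphi$ correspond to the perfect matchings between $\mathcal T_1\cup\mathcal T_2$ and $C$ in the bipartite graph of ones. Equivalently, the signed sum equals $\pm\det N$, where $N$ is the submatrix of $M$ with rows $\mathcal T_1\cup\mathcal T_2$ and columns $C$; the sign is that of the Laplace expansion along the rows of $\mathcal R$. The rows of $N$ indexed by $\mathcal T_1$ have pairwise disjoint supports, and so do those indexed by $\mathcal T_2$. Hence each column of $N$ contains at most one $1$ from a $\mathcal T_1$-row and at most one from a $\mathcal T_2$-row. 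So $N$, a square $0/1$ matrix, is the incidence matrix of a bipartite graph with parts $\mathcal T_1$ and $\mathcal T_2$: each column is an edge (two ones), a pendant column (one $1$), or a zero column.

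Such a matrix is totally unimodular. We transpose $N$ and apply the standard criterion (Ghouila-Houri, or the classical fact that bipartite incidence matrices are TU): every column of $N^\top$ has at most one $1$ in the $\mathcal T_1$ block and at most one in the $\mathcal T_2$ block. Subtracting the $\mathcal T_2$ rows from the $\mathcal T_1$ rows therefore gives each column of $N^\top$ a sum in $\{-1,0,1\}$. Alternatively, one can argue directly by induction: a zero column gives determinant $0$; a column with a single $1$ allows expansion along it; and if every column has two ones, the sum of the $\mathcal T_1$-rows equals the sum of the $\mathcal T_2$-rows, so the determinant is $0$, or the matrix is degenerate when one side is empty. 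In every case $|\det N|\le1$. This is the main obstacle, and the direct induction above is what I would write out.

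\textbf{Conclusion and the bound on $\nu$.} Summing over $\varphi$ gives $|\det M|\le\sum_\varphi|\det N_\varphi|\le\nu$, and $\nu=1$ when $\mathcal R=\varnothing$ (only the empty map). For the last claim, $\nu$ is at most the number of all maps $\varphi$ with $\varphi(\ell)\in\operatorname{supp}(M_{\ell,\cdot})$, which is $\prod_{\ell\in\mathcal R}|\operatorname{supp}(M_{\ell,\cdot})|$. When these supports are pairwise disjoint, every such map is automatically injective, so equality holds.
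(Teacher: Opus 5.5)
Your proof is correct and is essentially the paper's argument. The paper expands $\det M$ by the generalized Laplace expansion along the rows of $\mathcal R$ and bounds $|\det M[\mathcal R,S]|$ by the permanent; your grouping of Leibniz terms by $\varphi=\pi|_{\mathcal R}$ is that expansion with the permanent bound built in. The paper gets $|\det M[\mathcal T,\overline S]|\le1$ by negating the $\mathcal T_2$ rows, which gives a submatrix of a directed-graph incidence matrix; this is equivalent to your bipartite-incidence argument.

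Your Ghouila-Houri remark swaps rows and columns after transposing: the criterion applies directly to the rows of $N$, split into $\mathcal T_1$ and $\mathcal T_2$. The direct induction you say you would write out is sound, so this does not affect the proof.
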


\begin{proof}
    Put $\mathcal T:=\mathcal T_1\cup\mathcal T_2$ and negate the rows in $\mathcal T_2$\,, which changes $\det M$ only by a sign. Then every column of $M[\mathcal T,\cdot]$ has at most one entry $1$ and at most one entry $-1$, so $M[\mathcal T,\cdot]$ is totally unimodular, being a submatrix of the incidence matrix of a directed graph (see \cite{NWbook}). The generalized Laplace expansion along the rows in $\mathcal R$ gives
    \[
    \det M=\sum_{S}\pm\det M[\mathcal R,S]\,\det M[\mathcal T,\overline S],
    \]
    where $S$ runs over the sets of $|\mathcal R|$ column indices and $\overline S$ is the complement of $S$. Each factor $\det M[\mathcal T,\overline S]$ lies in $\{-1,0,1\}$, and $|\det M[\mathcal R,S]|$ is at most the permanent of $M[\mathcal R,S]$, which counts the bijections $\varphi:\mathcal R\to S$ with $\varphi(\ell)\in\operatorname{supp}(M_{\ell,\cdot})$ for all $\ell\in\mathcal R$. Each injective map counted by $\nu$ arises exactly once, for $S=\varphi(\mathcal R)$, so $|\det M|\leq\nu$. Dropping injectivity gives the product bound, and if the supports in $\mathcal R$ are pairwise disjoint, every such map is injective.
\end{proof}

For integers $s\geq0$ and $m\geq1$, let $\Pi(s,m)$ be the largest product of $m$ nonnegative integers with sum at most $s$, and put $\Pi(s,0):=1$. The maximum is attained when every part equals $\lfloor s/m\rfloor$ or $\lceil s/m\rceil$, because replacing two parts that differ by at least $2$ by two parts closer together does not decrease the product. By the inequality of arithmetic and geometric means, $\Pi(s,m)\leq(s/m)^m$; and because $x\mapsto x\log(s/x)$ is increasing on $(0,s/e]$ and $s/3<s/e$, we get $\Pi(s,m)\leq3^{s/3}$ whenever $m\leq s/3$. Relatedly, for an integer $r\geq0$, let
\[
\mathfrak{B}(r):=\max\Bigl\{\textstyle\prod_{j=1}^{\ell}a_j:\ \ell\geq0,\ a_j\in\mathbb{Z}_{\geq1}\,,\ \sum_{j=1}^{\ell}a_j\leq r\Bigr\},
\]
the empty product being $1$. Omitting parts equal to $1$, replacing a part $a\geq5$ by the two parts $3$ and $a-3$, replacing three parts equal to $2$ by two parts equal to $3$, and splitting a part $4$ into $2,2$ shows that
\begin{equation}\label{eq:Bformula}
\mathfrak{B}(0)=\mathfrak{B}(1)=1,\qquad
\mathfrak{B}(r)=\begin{cases}
3^{r/3}, & r\geq2,\ r\equiv0\pmod3,\\
4\cdot3^{(r-4)/3}, & r\geq2,\ r\equiv1\pmod3,\\
2\cdot3^{(r-2)/3}, & r\geq2,\ r\equiv2\pmod3.
\end{cases}
\end{equation}
In particular $\mathfrak{B}(r)\leq3^{r/3}$, and $\mathfrak{B}(r)\leq \mathfrak{B}(r-1)+\mathfrak{B}(r-3)$ for every $r\geq5$, with equality for $r\equiv0,2\pmod3$, while for $r=3k+1\geq7$ it reads $12\cdot3^{k-2}\leq13\cdot3^{k-2}$. For $r\geq2$, $\mathfrak{B}(r)$ is the function $B(r)$ of \CP{\CPexp}, where the value $\mathfrak{B}(d-2)$ is attained by a primitive facet normal of $M(P)$ for a chain with $\lfloor(d-1)/3\rfloor+1$ isolated elements adjoined; we write $\mathfrak{B}$ to avoid a clash with the subsets $B_j$\,. In \cref{thm:inftynormofalpha}\ref{thm:size:b}, the same number bounds the entries for three summands.

\begin{lemma}[An incomplete color class]\label{lem:incompleteclass}
    Let $M$ be a nonsingular $s\times s$ zero-one matrix, $s\geq1$, and suppose that its row indices $[s]$ are partitioned into three classes such that $\operatorname{supp}(M_{\ell,\cdot})\cap\operatorname{supp}(M_{\ell',\cdot})=\varnothing$ for all $\ell\neq\ell'$ in the same class. Then $|\det M|\leq \mathfrak{B}(s-1)$.
\end{lemma}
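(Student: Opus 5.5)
The plan is to reduce the bound to \cref{lem:productbound}: one color class plays the role of $\mathcal R$ and the other two play the roles of $\mathcal T_1$ and $\mathcal T_2$. Let the three classes be $\mathcal C_0,\mathcal C_1,\mathcal C_2$. Whatever class we choose as $\mathcal R$, the rows of $\mathcal R$ have pairwise disjoint supports. So \cref{lem:productbound} gives
\[
|\det M|\leq\nu=\prod_{\ell\in\mathcal R}|\operatorname{supp}(M_{\ell,\cdot})|.
\]
Since $M$ is nonsingular, no row is zero, so every factor is a positive integer. By the definition of $\mathfrak{B}$, it then suffices to choose $\mathcal R$ so that
\[
\sum_{\ell\in\mathcal R}|\operatorname{supp}(M_{\ell,\cdot})|\leq s-1.
\]
If $\mathcal R=\varnothing$ the product is $1\leq\mathfrak{B}(s-1)$, so an empty class is always an admissible choice.

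The key step is to show that some class does not cover all $s$ columns. For each class $\mathcal C_k$, disjointness gives $\sum_{\ell\in\mathcal C_k}|\operatorname{supp}(M_{\ell,\cdot})|=|\bigcup_{\ell\in\mathcal C_k}\operatorname{supp}(M_{\ell,\cdot})|\leq s$. Equality holds exactly when the supports in $\mathcal C_k$ partition $[s]$, that is, when the rows of $\mathcal C_k$ sum to the all-ones vector $\mathbf{e}^\top$. Suppose two distinct nonempty classes $\mathcal C_k,\mathcal C_{k'}$ both had this property. Then
\[
\sum_{\ell\in\mathcal C_k}M_{\ell,\cdot}-\sum_{\ell\in\mathcal C_{k'}}M_{\ell,\cdot}=0
\]
would be a nontrivial linear dependence among the rows of $M$, because the two index sets are disjoint and nonempty. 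This contradicts nonsingularity. Hence at most one class has support sum equal to $s$.

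Choose $\mathcal R$ as follows. If some class is empty, take it as $\mathcal R$. Otherwise all three classes are nonempty, and by the previous step at least two of them have support sum at most $s-1$; take one of those as $\mathcal R$. The remaining two classes serve as $\mathcal T_1$ and $\mathcal T_2$, and they satisfy the disjointness hypothesis of \cref{lem:productbound}. We then get $|\det M|\leq\prod_{\ell\in\mathcal R}a_\ell$, where $a_\ell:=|\operatorname{supp}(M_{\ell,\cdot})|\geq1$ and $\sum_{\ell\in\mathcal R}a_\ell\leq s-1$. This product is at most $\mathfrak{B}(s-1)$ by definition.

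The only genuine point is the linear-dependence argument that rules out two full covering classes; the rest is direct bookkeeping with \cref{lem:productbound}. I would double-check the degenerate cases ($s=1$, or classes with a single row), but the empty-class and nonempty-row observations cover them.
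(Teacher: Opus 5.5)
Your proposal is correct and follows the paper's proof: the same linear-dependence argument shows that two classes cannot both cover all $s$ columns. A class whose support sizes sum to at most $s-1$ (possibly an empty class) then serves as $\mathcal R$ in \cref{lem:productbound}, which bounds $|\det M|$ by a product of positive integers with sum at most $s-1$.
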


\begin{proof}
    For a class $J\subset[s]$ put $C(J):=\bigcup_{\ell\in J}\operatorname{supp}(M_{\ell,\cdot})\subset[s]$, the set of columns covered by the rows in $J$. If $C(J)=C(J')=[s]$ for two distinct classes $J$ and $J'$, then, the supports within each class being pairwise disjoint, $\sum_{\ell\in J}M_{\ell,\cdot}=\mathbf{e}^\top=\sum_{\ell\in J'}M_{\ell,\cdot}$\,, a nontrivial linear dependence among the rows of $M$ (as $J\cap J'=\varnothing$), contrary to nonsingularity. So some class $J$, possibly empty, has $|C(J)|\leq s-1$; its row supports are pairwise disjoint, so $\sum_{\ell\in J}|\operatorname{supp}(M_{\ell,\cdot})|=|C(J)|\leq s-1$.
    By \cref{lem:productbound} with $\mathcal R:=J$ and the two other classes as $\mathcal T_1$ and $\mathcal T_2$\,, $|\det M|\leq\prod_{\ell\in J}|\operatorname{supp}(M_{\ell,\cdot})|$, a product of $|J|$ positive integers with sum $|C(J)|\leq s-1$; by the definition of $\mathfrak{B}$, it is at most $\mathfrak{B}(s-1)$ (and it is $1$ if $J=\varnothing$).
\end{proof}

\begin{theorem}[Size of facet normals]\label{thm:inftynormofalpha}
    Let $d\geq2$ and $n\geq0$ be integers, let $Q\subset\mathbb{R}^d$ be the Minkowski sum of $n+1$ staircase simplices, not necessarily distinct, and let $\alpha\in\mathbb{Z}^d$ be a primitive facet normal of $Q$.
    \begin{enumerate}[label=(\alph*),ref=(\alph*)]
        \item\label{thm:size:a} For at most two summands ($n\leq1$), $\alpha\in\{-1,0,1\}^d$.
        \item\label{thm:size:b} For at most three summands ($n\leq2$), $\lVert\alpha\rVert_\infty\leq \mathfrak{B}(d-2)\leq 3^{(d-2)/3}$.
        \item\label{thm:size:c} For at most four summands ($n\leq3$), $\lVert\alpha\rVert_\infty\leq 12^{(d-1)/4}$.
        \item\label{thm:size:d} In general, $\lVert\alpha\rVert_\infty\leq\min\{(n+1)^{(d-1)/2},\ \delta_{d-1}\}$.
    \end{enumerate}
\end{theorem}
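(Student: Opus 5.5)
Throughout, the plan is to reduce every part to a determinant bound. By \cref{thm:facetnormalcharacterization}, a primitive facet normal $\alpha$ of $Q$ yields nonempty $B_1,\ldots,B_{d-1}$ satisfying \ref{thm:fnc:a}--\ref{thm:fnc:c}, with at most $n+1$ classes of pairwise disjoint sets. By \cref{lem:crossproduct}, $|\alpha_i|\le|\det\widehat A^i|$, where $\widehat A^i$ is the $(d-1)\times(d-1)$ indicator matrix with column $i$ deleted. Deleting a column keeps the row supports within a class pairwise disjoint, although some rows may become zero. If $\widehat A^i$ is singular, then $\alpha_i=0$ and there is nothing to prove. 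So in every part I only need to bound $|\det M|$ for a nonsingular square $0/1$ matrix $M$ of size $s=d-1$ whose rows are split into $n+1$ classes with disjoint supports inside each class.

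For \ref{thm:size:a}, with $n\le1$ there are at most two classes. \Cref{lem:productbound} with $\mathcal R=\varnothing$ and $\mathcal T_1,\mathcal T_2$ the two classes gives $|\det M|\le1$. For \ref{thm:size:b}, with three classes, \cref{lem:incompleteclass} applied with $s=d-1$ gives $|\det M|\le\mathfrak B(d-2)$. Then \eqref{eq:Bformula} gives $\mathfrak B(d-2)\le3^{(d-2)/3}$. Both parts are therefore immediate from earlier results.

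For \ref{thm:size:d}, the bound $\delta_{d-1}$ is \cref{cor:deltabound}. For the bound $(n+1)^{(d-1)/2}$ I would use Hadamard's inequality $|\det M|\le\prod_j\lVert M_{\cdot,j}\rVert_2$ over columns. Each column has at most one $1$ from each class, so it has at most $n+1$ ones. Hence each column norm is at most $\sqrt{n+1}$, and $|\det M|\le(n+1)^{(d-1)/2}$.

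For \ref{thm:size:c}, with four classes, Hadamard alone gives $4^{(d-1)/2}=16^{(d-1)/4}$, which is not enough. I would combine the two previous ideas. The argument of \cref{lem:incompleteclass} extends: if three of the four classes each covered all columns, their row sums would all equal $\mathbf e^\top$, giving a dependence. So at least two classes, say $J_1,J_2$, miss some column. I would pair the classes, putting two of them into $\mathcal R$ and the other two into $\mathcal T_1,\mathcal T_2$ in the generalized Laplace expansion of \cref{lem:productbound}. This reduces the problem to bounding $\sum_S|\det M[\mathcal R,S]|$, where the rows of $\mathcal R$ come from two classes. For that sum I would apply Hadamard to each minor, using columns with at most two ones, or alternatively bound $|\det M|$ via $\det(MM^\top)$ after Laplace. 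The resulting count should give something like $\prod(\text{support sizes})\cdot 2^{(\cdot)/2}$, to be optimized to $12^{s/4}=3^{s/4}\cdot4^{s/4}$. One heuristic is a factor $3^{1/3}$-type contribution per column from the product part and a factor $\sqrt2$ per column from the Hadamard part, balanced to give $12^{1/4}$ per column. I expect this optimization, and in particular finding an inequality that interpolates between the product bound and the Hadamard bound to yield exactly $12^{(d-1)/4}$, to be the main obstacle. A fallback is to write $M=\begin{pmatrix}X\\Y\end{pmatrix}$, with $X$ the rows of two classes and $Y$ the rows of the other two. I would then use $|\det M|^2\le\det(XX^\top)\det(YY^\top)$ (Fischer's inequality) and bound each Gram determinant by AM--GM on its trace. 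The trace of $XX^\top$ is the total number of ones in $X$, which is at most $2s$. I would then choose the split to make the product $\le12^{s/2}$.
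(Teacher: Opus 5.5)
Parts (a), (b), and (d) are correct and follow the paper's argument: \cref{lem:productbound} with $\mathcal R=\varnothing$, \cref{lem:incompleteclass} with $s=d-1$, and column-wise Hadamard together with \cref{cor:deltabound}. Part (c), however, is not proved.

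Your main route stops at ``the resulting count should give something like\dots''. Applying Hadamard to each minor $M[\mathcal R,S]$ bounds single terms of the Laplace expansion. It does not control how many sets $S$ contribute, and that is the whole difficulty.

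The fallback cannot reach $12^{s/4}$. By Hadamard's inequality for positive semidefinite matrices, $\det(XX^\top)$ is at most the product of the row support sizes of $X$, and AM--GM on the trace is weaker still. So Fischer's inequality only gives back the row-wise Hadamard bound. Concretely, if the four classes each have size $s/4$, every pairing gives $X$ and $Y$ with $s/2$ rows and at most $2s$ ones. Your bound is then $|\det M|\le\bigl((2s/(s/2))^{s/2}\cdot(2s/(s/2))^{s/2}\bigr)^{1/2}=2^s=16^{s/4}$, which is exactly the bound you were trying to improve. No choice of split helps.

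The missing idea is to use the full conclusion of \cref{lem:productbound}, not only its product bound. The minors on $\mathcal T_1\cup\mathcal T_2$ lie in $\{-1,0,1\}$, and $|\det M[\mathcal R,S]|\le\operatorname{per}M[\mathcal R,S]$. Hence $|\det M|$ is at most the number of \emph{injective} maps $\varphi$ from $\mathcal R$ to the columns with $\varphi(\ell)\in\operatorname{supp}(M_{\ell,\cdot})$.

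Take $\mathcal R=J_0\cup J_1$ to be the two \emph{smallest} classes, with $m_0\le m_1\le m_2\le m_3$, and count $\varphi$ class by class.
\begin{itemize}
\item On $J_0$ there are at most $\Pi(s,m_0)\le(s/m_0)^{m_0}$ choices.
\item Once $\varphi|_{J_0}$ is fixed, injectivity confines $\varphi(J_1)$ to the $s-m_0$ unused columns. There the supports of the rows of $J_1$ are still pairwise disjoint. Since $m_1\le(s-m_0)/3$, there are at most $\Pi(s-m_0,m_1)\le3^{(s-m_0)/3}$ further choices.
\end{itemize}
Hence $\log|\det M|\le f(m_0)$ with $f(x)=x\log(s/x)+\frac{s-x}{3}\log3$. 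On $(0,s/4]$ we have $f'(x)\ge\log4-1-\frac13\log3>0$, and $m_0\le s/4$, so $|\det M|\le12^{s/4}$.

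Injectivity is what makes this work. Bounding the two classes independently gives $4^{s/4}\cdot4^{s/4}=16^{s/4}$ again. The factor $12=4\cdot3$ comes from $s/4$ rows with four choices followed by $s/4$ rows with only three, not from a per-column factor $\sqrt2$.
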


\begin{proof}
    By \cref{thm:facetnormalcharacterization}, there exist nonempty subsets $B_1\,,\ldots,B_{d-1}\subset[d]$ satisfying \ref{thm:fnc:a}--\ref{thm:fnc:c}: $\alpha^\top\mathbf{e}_{B_j}=0$ for $j\in[d-1]$, the vectors $\mathbf{e}_{B_1}\,,\ldots,\mathbf{e}_{B_{d-1}}$ are linearly independent, and $[d-1]$ is partitioned into at most $n+1$ classes such that $B_j\cap B_{j'}=\varnothing$ for $j\neq j'$ in the same class. Adding empty classes, we may use $n'+1$ classes $J_0\,,\ldots,J_{n'}$ for any $n'\geq n$. Let $A$, $\widehat A^i$ and $\gamma$ be as in \cref{lem:crossproduct}. By that lemma, $\alpha=\pm\gamma/g$ with $g:=\gcd(\gamma_1\,,\ldots,\gamma_d)$, and $|\alpha_i|\leq|\det\widehat A^i|$ for $i\in[d]$.
    Fix $i\in[d]$.

    \smallskip
    \emph{\ref{thm:size:a}.} Use two classes $J_0\,,J_1$\,. The row $j$ of $\widehat A^i$ has support $B_j\setminus\{i\}$, so \cref{lem:productbound} with $\mathcal R=\varnothing$, $\mathcal T_1=J_0$\,, and $\mathcal T_2=J_1$ gives $\det\widehat A^i\in\{-1,0,1\}$ for every $i$, so $\gamma\in\{-1,0,1\}^d$. A nonzero vector with entries in $\{-1,0,1\}$ is primitive, so $g=1$ and $\alpha=\pm\gamma\in\{-1,0,1\}^d$.

    \smallskip
    \emph{\ref{thm:size:b}.} Use three classes $J_0\,,J_1\,,J_2$\,. The matrix $\widehat A^i$ is a $(d-1)\times(d-1)$ zero-one matrix whose row $j$ has support $B_j\setminus\{i\}$; for $j\neq j'$ in the same class these supports are disjoint, so $J_0\,,J_1\,,J_2$ partition the rows of $\widehat A^i$ as required in \cref{lem:incompleteclass}. If $\widehat A^i$ is singular, then $\det\widehat A^i=0$; otherwise \cref{lem:incompleteclass} with $s=d-1$ gives $|\det\widehat A^i|\leq \mathfrak{B}(d-2)$.

    \smallskip
    \emph{\ref{thm:size:c}.} If $\det\widehat A^{\,i}=0$, the desired bound is immediate; hence assume $\det\widehat A^{\,i}\ne0$. Use four classes $J_0\,,J_1\,,J_2\,,J_3$\,, labeled so that $m_0\leq m_1\leq m_2\leq m_3$\,, where $m_k:=|J_k|$, and put $s:=d-1$; note that $m_0+m_1+m_2+m_3=s$. By \cref{lem:productbound} with $\mathcal R=J_0\cup J_1$\,, $\mathcal T_1=J_2$\,, and $\mathcal T_2=J_3$\,, $|\det\widehat A^i|$ is at most the number of injective maps $\varphi:J_0\cup J_1\to[d]\setminus\{i\}$ with $\varphi(j)\in B_j$ for every $j\in J_0\cup J_1$\,. The sets $B_j\setminus\{i\}$, $j\in J_0$\,, are pairwise disjoint subsets of $[d]\setminus\{i\}$, so $\sum_{j\in J_0}|B_j\setminus\{i\}|\leq s$, and the restriction of $\varphi$ to $J_0$ can be chosen in at most $\prod_{j\in J_0}|B_j\setminus\{i\}|\leq\Pi(s,m_0)\leq(s/m_0)^{m_0}$ ways. Once the restriction of $\varphi$ to $J_0$ is fixed, each value $\varphi(j)$, $j\in J_1$\,, lies in $B_j\setminus(\{i\}\cup\varphi(J_0))$, and these sets are pairwise disjoint subsets of $[d]\setminus(\{i\}\cup\varphi(J_0))$, a set with $s-m_0$ elements; so the restriction of $\varphi$ to $J_1$ can be chosen in at most $\Pi(s-m_0,m_1)\leq((s-m_0)/m_1)^{m_1}$ ways. Because $m_1$ is the smallest of $m_1\,,m_2\,,m_3$\,, whose sum is $s-m_0$\,, we have $m_1\leq(s-m_0)/3$, so $((s-m_0)/m_1)^{m_1}\leq3^{(s-m_0)/3}$, because $x\mapsto x\log\bigl((s-m_0)/x\bigr)$ is increasing on $\bigl(0,(s-m_0)/e\bigr]$, as noted before \cref{lem:incompleteclass} (the convention $(s/0)^0=1$ covers empty classes). Hence $\log|\det\widehat A^i|\leq f(m_0)$, where
    \[
    f(x):=x\log(s/x)+\bigl((s-x)/3\bigr)\log3,\qquad 0\leq x\leq s/4 .
    \]
    For $0<x\leq s/4$ we have $f'(x)=\log(s/x)-1-(\log3)/3\geq\log4-1-(\log3)/3>0$, so $f(m_0)\leq f(s/4)=(s/4)\log12$, that is, $|\det\widehat A^i|\leq12^{(d-1)/4}$.

    \smallskip
    \emph{\ref{thm:size:d}.} Use $n+1$ classes $J_0\,,\ldots,J_n$\,. For each $i'\in[d]\setminus\{i\}$ and each class $J_k$\,, at most one of the pairwise disjoint subsets $B_j$\,, $j\in J_k$\,, contains $i'$; so the column of $\widehat A^i$ indexed by $i'$ has at most $n+1$ ones and Euclidean norm at most $\sqrt{n+1}$. Hadamard's inequality gives $|\det\widehat A^i|\leq (n+1)^{(d-1)/2}$. The bound $\delta_{d-1}$ is \cref{cor:deltabound}.
\end{proof}

In terms of $\mathfrak{h}_m(d)$, \cref{thm:inftynormofalpha} says $\mathfrak{h}_2(d)=1$, $\mathfrak{h}_3(d)\leq\min\{\mathfrak{B}(d-2),\delta_{d-1}\}$ (for example, $\mathfrak{B}(6),\mathfrak{B}(7),\mathfrak{B}(8)=9,12,18$, while $\delta_7,\delta_8,\delta_9=32,56,144$), $\mathfrak{h}_4(d)\leq12^{(d-1)/4}$, and $\mathfrak{h}_m(d)\leq\min\{m^{(d-1)/2},\delta_{d-1}\}$. The bounds in \ref{thm:size:b} and \ref{thm:size:c} should be combined with \ref{thm:size:d}. Because $12^{1/4}\approx1.861<2$, part \ref{thm:size:c} always improves the bound $4^{(d-1)/2}=2^{d-1}$ of \ref{thm:size:d} for four summands; the analogous counting does not improve \ref{thm:size:d} for five summands ($60^{1/5}\approx2.27>\sqrt5$).  Concerning sharpness: \ref{thm:size:a} is attained, for example by $\mathbf{e}_1-\mathbf{e}_2$\,; for many summands, the order $d^{d/2}$ of $\delta_{d-1}$ is correct up to a factor $2^{O(d)}$ (\cref{rem:coeffvalues}); for three and four summands, see \cref{cor:H3,cor:separation}. For three summands and $d\leq6$, \cref{prop:H3small} below determines $\mathfrak{h}_3(d)$ exactly.

\section{Three and four summands}\label{sec:three}

We begin with a small example: $\alpha=(1,1,-1,-2)$ needs three summands. The zero-sum subsets $\{1,3\}$, $\{2,3\}$, and $\{1,2,4\}$ have linearly independent indicator vectors, so $\kappa(\alpha)\leq3$ by \cref{thm:facetnormalcharacterization}, while $\alpha_4=-2$ rules out two summands by \cref{thm:inftynormofalpha}\ref{thm:size:a}. Thus $\kappa(\alpha)=3$, and $\lVert\alpha\rVert_\infty=2=\mathfrak{h}_3(4)$ (\cref{prop:H3small}).

Exponentially large entries already occur for three summands. Define the \emph{Padovan numbers} by $\mathrm{Pad}_0=\mathrm{Pad}_1=\mathrm{Pad}_2:=1$ and $\mathrm{Pad}_i:=\mathrm{Pad}_{i-2}+\mathrm{Pad}_{i-3}$ for $i\geq3$ (see \cite{OEISA000931}); they satisfy the identity
\begin{equation}\label{eq:padovanidentity}
\mathrm{Pad}_i=\mathrm{Pad}_{i-1}+\mathrm{Pad}_{i-5}\qquad(i\geq5),
\end{equation}
because $\mathrm{Pad}_i-\mathrm{Pad}_{i-1}=(\mathrm{Pad}_{i-2}+\mathrm{Pad}_{i-3})-(\mathrm{Pad}_{i-3}+\mathrm{Pad}_{i-4})=\mathrm{Pad}_{i-2}-\mathrm{Pad}_{i-4}=\mathrm{Pad}_{i-5}$\,. In particular, they are nondecreasing: $\mathrm{Pad}_0,\ldots,\mathrm{Pad}_4=1,1,1,2,2$, and $\mathrm{Pad}_i-\mathrm{Pad}_{i-1}=\mathrm{Pad}_{i-5}>0$ for $i\geq5$. Let $\theta=1.3247\ldots$ be the \emph{plastic number}, the real root of $t^3=t+1$; the other two roots of $t^3-t-1$ have modulus $\theta^{-1/2}<1$, so $\mathrm{Pad}_i=\Theta(\theta^{\,i})$. 

\begin{theorem}\label{thm:padovan}
    Let $d\geq4$, and label the coordinates of $\mathbb{R}^d$ as $0,1,\ldots,d-1$. Define $\alpha\in\mathbb{Z}^d$ by $\alpha_i:=(-1)^i\mathrm{Pad}_i$ for $0\leq i\leq d-1$. Then $\alpha$ is a primitive facet normal of a Minkowski sum of three staircase simplices, $\lVert\alpha\rVert_\infty=\mathrm{Pad}_{d-1}$\,, and $\kappa(\alpha)=3$.
\end{theorem}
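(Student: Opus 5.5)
We will verify condition \ref{thm:fnc:2} of \cref{thm:facetnormalcharacterization} with $n=2$ by writing down an explicit family of $d-1$ zero-sum subsets, one for each index $i\in\{1,\ldots,d-1\}$. The subset for $i$ will have largest element $i$. Independence will then follow from a triangular structure, and $3$-colorability from coloring each subset by its index modulo $3$. The other assertions of the theorem follow directly from monotonicity of the Padovan numbers and from \cref{thm:inftynormofalpha}\ref{thm:size:a}.

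The first entries of $\alpha$ are $1,-1,1,-2,2,-3,4,-5,7,\ldots$. The family consists of the four small sets
\[
S_1=\{0,1\},\quad S_2=\{1,2\},\quad S_3=\{0,2,3\},\quad S_4=\{3,4\},
\]
whose $\alpha$-sums are $1-1$, $-1+1$, $1+1-2$ and $-2+2$, all zero. For $5\le i\le d-1$ it contains in addition
\[
S_i=\{i-5,\,i-1,\,i\}.
\]
The entries $\alpha_{i-1}$ and $\alpha_{i-5}$ both have sign $-(-1)^i$, so the identity \eqref{eq:padovanidentity} gives $\alpha_i+\alpha_{i-1}+\alpha_{i-5}=(-1)^i(\mathrm{Pad}_i-\mathrm{Pad}_{i-1}-\mathrm{Pad}_{i-5})=0$. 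For $d=4$ only $S_1,S_2,S_3$ are used. In every case there are exactly $d-1$ sets, so condition \ref{thm:fnc:a} holds.

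For condition \ref{thm:fnc:b}, note that $\max S_i=i$ for each $i$. Order the indicator vectors by $i$ and look at the coordinates $1,\ldots,d-1$. Then $\mathbf{e}_{S_i}$ has entry $1$ in coordinate $i$ and entry $0$ in every larger coordinate, so the corresponding $(d-1)\times(d-1)$ submatrix is triangular with ones on the diagonal, and the vectors are independent.

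For condition \ref{thm:fnc:c}, we color $S_i$ by $i \bmod 3$. It remains to check that $S_i\cap S_{i'}\neq\varnothing$ forces $i-i'\not\equiv0\pmod 3$.
\begin{itemize}
\item For $i<i'$ with $i'\ge5$, the set $S_{i'}$ consists of $i'$, $i'-1$ and $i'-5$. An element shared with $S_i$ must be $i'-1$ or $i'-5$. Since every element of $S_i$ lies within distance $5$ below $i$, a short inspection shows that the only possible differences are $i'-i\in\{1,4,5\}$. For $S_3$ against $S_5$ the difference is $2$, through the shared element $0$.
\item Among $S_1,\ldots,S_4$, the intersecting pairs are $(S_1,S_2)$, $(S_1,S_3)$, $(S_2,S_3)$ and $(S_3,S_4)$, with differences $1,2,1,1$.
\end{itemize}
So every difference of intersecting indices lies in $\{1,2,4,5\}$, and the coloring by $i\bmod 3$ is proper. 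This bookkeeping is the only real obstacle, and I would settle it by listing the intersecting pairs explicitly. \Cref{thm:facetnormalcharacterization} then makes $\alpha$ a facet normal of a sum of three staircase simplices, so $\kappa(\alpha)\le3$.

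Finally, $\alpha$ is primitive because $\alpha_0=1$. The Padovan numbers are nondecreasing, so $\lVert\alpha\rVert_\infty=\mathrm{Pad}_{d-1}$. Since $d\ge4$, $\alpha_3=-2\notin\{-1,0,1\}$, so by \cref{thm:inftynormofalpha}\ref{thm:size:a} the vector $\alpha$ is not a facet normal of any sum of at most two staircase simplices. Together with the construction this gives $\kappa(\alpha)=3$.
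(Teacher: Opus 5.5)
Your proposal is correct and follows the paper's proof essentially verbatim: the same sets $B^i$, the same triangular independence argument, the coloring by $i \bmod 3$, and the appeal to \cref{thm:inftynormofalpha}\ref{thm:size:a} for $\kappa(\alpha)=3$. The one thing to tighten is the case $i\leq4<i'$ of the disjointness check, which the paper carries out explicitly: you first assert that the differences lie in $\{1,4,5\}$ and then patch in the exception $S_3\cap S_5=\{0\}$, whereas a direct enumeration gives $i'-i\in\{1,2,4,5\}$, with $2$ occurring only for that pair.
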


\begin{proof}
    Consider the subsets
    \begin{gather*}
    B^1:=\{0,1\},\qquad B^2:=\{1,2\},\qquad B^3:=\{0,2,3\},\qquad B^4:=\{3,4\},\\
    B^i:=\{i-5,i-1,i\}\qquad(5\leq i\leq d-1).
    \end{gather*}
    The largest element of $B^i$ is $i$, so exactly $B^1,\ldots,B^{d-1}$ lie in $\{0,\ldots,d-1\}$. We verify conditions \ref{thm:fnc:a}--\ref{thm:fnc:c} of \cref{thm:facetnormalcharacterization} with three classes.

    \emph{Zero sums.} For $B^1,\ldots,B^4$, the sums $\alpha^\top\mathbf{e}_{B^i}$ are $1-1$, $-1+1$, $1+1-2$, and $-2+2$. For $i\geq5$, the indices $i-5$ and $i-1$ have parity opposite to $i$, so $\alpha^\top\mathbf{e}_{B^i}=(-1)^i(\mathrm{Pad}_i-\mathrm{Pad}_{i-1}-\mathrm{Pad}_{i-5})=0$ by \eqref{eq:padovanidentity}.

    \emph{Linear independence.} Delete the coordinate $0$ and order the rows $\mathbf{e}_{B^1}^\top,\ldots,\mathbf{e}_{B^{d-1}}^\top$ by $i$. Since $i$ is the largest element of $B^i$, the resulting $(d-1)\times(d-1)$ matrix, with columns $1,\ldots,d-1$, is lower triangular with unit diagonal; hence the $d-1$ indicator vectors are linearly independent.

    \emph{Three classes.} Put $B^i$ into the class indexed by $i\bmod3$. We claim that two subsets $B^i$ and $B^j$ with $i<j$ and $i\equiv j\pmod3$ are disjoint; note that then $j-i\geq3$. If $i\geq5$, every element of $B^j=\{j-5,j-1,j\}$ that could lie in $B^i=\{i-5,i-1,i\}$ is $j-5$ or $j-1$ (as $j>i$), and the coincidences $j-5\in\{i-1,i\}$ and $j-1=i$ give $j-i\in\{4,5,1\}$, none of which is divisible by $3$. If $i\leq4$, then $j\geq5$, and $B^i\cap B^j\neq\varnothing$ would require $j-5\in B^i$ or $j-1\in B^i$ (as $j>4\geq\max B^i$), that is, $j\in\{5,6\}$ for $i=1$, $j\in\{6,7\}$ for $i=2$, $j\in\{5,7,8\}$ for $i=3$, or $j\in\{5,8,9\}$ for $i=4$; none of these values of $j$ is congruent to $i$ modulo $3$. Finally, $B^1\cap B^4=\varnothing$. Thus the subsets in each class are pairwise disjoint, and \ref{thm:fnc:c} holds with $n+1=3$.

    \emph{Conclusion.} By \cref{thm:facetnormalcharacterization}, $\alpha$ is a facet normal of a Minkowski sum of three staircase simplices. It is primitive because $\alpha_0=\mathrm{Pad}_0=1$, and $\lVert\alpha\rVert_\infty=\mathrm{Pad}_{d-1}$ because the $\mathrm{Pad}_i$ are nondecreasing. Since $\mathrm{Pad}_{d-1}\geq \mathrm{Pad}_3=2$ for $d\geq4$, \cref{thm:inftynormofalpha}\ref{thm:size:a} rules out two summands, so $\kappa(\alpha)=3$.
\end{proof}

\begin{corollary}\label{cor:H3}
    For $d\geq4$, $\mathrm{Pad}_{d-1}\leq \mathfrak{h}_3(d)\leq \mathfrak{B}(d-2)\leq3^{(d-2)/3}$. In particular,
    \[
    \theta\ \leq\ \liminf_{d\to\infty}\mathfrak{h}_3(d)^{1/d}\ \leq\ \limsup_{d\to\infty}\mathfrak{h}_3(d)^{1/d}\ \leq\ 3^{1/3},
    \]
    with $\theta=1.3247\ldots$ and $3^{1/3}=1.4422\ldots$.
\end{corollary}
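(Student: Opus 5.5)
The corollary assembles results already proved, so the plan is to combine the construction of \cref{thm:padovan} with the upper bound of \cref{thm:inftynormofalpha}\ref{thm:size:b}. Then take $d$-th roots and pass to the limit.

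\emph{Lower bound.} For $d\geq4$, \cref{thm:padovan} gives a primitive facet normal $\alpha$ of a sum of three staircase simplices with $\lVert\alpha\rVert_\infty=\mathrm{Pad}_{d-1}$. By the definition of $\mathfrak{h}_3(d)$ as a maximum over such normals, $\mathfrak{h}_3(d)\geq\mathrm{Pad}_{d-1}$.

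\emph{Upper bound.} Every primitive facet normal of a sum of three summands has $n=2$, so \cref{thm:inftynormofalpha}\ref{thm:size:b} gives $\lVert\alpha\rVert_\infty\leq\mathfrak{B}(d-2)\leq3^{(d-2)/3}$. Taking the maximum yields the chain of inequalities in the statement. The inequality $\mathfrak{B}(r)\leq3^{r/3}$ was recorded after \eqref{eq:Bformula}. If one wants to recheck it from the closed form, it amounts to $4\leq3^{4/3}$ and $2\leq3^{2/3}$, together with $\mathfrak{B}(0)=\mathfrak{B}(1)=1$.

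\emph{Limits.} From $\mathfrak{h}_3(d)\leq3^{(d-2)/3}$ we get $\mathfrak{h}_3(d)^{1/d}\leq3^{1/3}\cdot3^{-2/(3d)}\to3^{1/3}$, which bounds the $\limsup$. For the $\liminf$, we use $\mathrm{Pad}_i=\Theta(\theta^i)$, as noted before \cref{thm:padovan}. This holds because $\theta$ is the unique dominant root of $t^3-t-1$, so the Binet-type formula has a positive coefficient on $\theta^i$. It gives a constant $c>0$ with $\mathrm{Pad}_{d-1}\geq c\,\theta^{d-1}$, hence
\[
\mathfrak{h}_3(d)^{1/d}\geq c^{1/d}\theta^{(d-1)/d}\to\theta .
\]
The middle inequality between $\liminf$ and $\limsup$ is trivial.

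The only point needing any care is the positivity of the coefficient of $\theta^i$ in the closed form. Equivalently, one needs the lower growth $\mathrm{Pad}_i\geq c\,\theta^i$ and not just the upper one. This can be avoided by a direct induction: choose $c>0$ with $\mathrm{Pad}_i\geq c\,\theta^i$ for $i=0,1,2$. Then $\mathrm{Pad}_i=\mathrm{Pad}_{i-2}+\mathrm{Pad}_{i-3}\geq c(\theta^{i-2}+\theta^{i-3})=c\,\theta^{i-3}(\theta+1)=c\,\theta^i$, using $\theta^3=\theta+1$.
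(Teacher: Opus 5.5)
Your proposal is correct and follows the paper's proof: the lower bound comes from \cref{thm:padovan}, the upper bound from \cref{thm:inftynormofalpha}\ref{thm:size:b}, and the limit bounds follow by taking $d$-th roots and using $\mathrm{Pad}_i=\Theta(\theta^i)$. The paper only cites this growth estimate. Your induction $\mathrm{Pad}_i\geq c\,\theta^i$ via $\theta^3=\theta+1$ proves the needed lower half directly, in the same spirit as the bracketing argument in the proof of \cref{thm:narayana}.
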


\begin{proof}
    The lower bound follows from \cref{thm:padovan}, and the upper bound follows from \cref{thm:inftynormofalpha}\ref{thm:size:b}; then $\mathrm{Pad}_{d-1}=\Theta(\theta^{\,d})$ gives the limit bounds.
\end{proof}

A parallel construction separates four summands from three. The role of the Padovan numbers is played by Narayana's cows sequence (see \cite{OEISA000930}): define $\mathcal{N}_1=\mathcal{N}_2=\mathcal{N}_3:=1$ and $\mathcal{N}_i:=\mathcal{N}_{i-1}+\mathcal{N}_{i-3}$ for $i\geq4$, and let $\psi=1.4655\ldots$ be the \emph{supergolden ratio}, the real root of $t^3=t^2+1$; the other two roots of $t^3-t^2-1$ have modulus $\psi^{-1/2}<1$, so $\mathcal{N}_i=\Theta(\psi^{\,i})$. Note that $\psi>3^{1/3}$: the polynomial $t^3-t^2-1$ is increasing for $t\geq1$ and takes the value $2-3^{2/3}<0$ at $t=3^{1/3}$ (because $9>8$). The $\mathcal{N}_i$ are nondecreasing, because $\mathcal{N}_1=\mathcal{N}_2=\mathcal{N}_3$ and $\mathcal{N}_i-\mathcal{N}_{i-1}=\mathcal{N}_{i-3}>0$ for $i\geq4$.

\begin{theorem}\label{thm:narayana}
    Let $d\geq3$. Define $\alpha\in\mathbb{Z}^d$ by $\alpha_i:=(-1)^i\mathcal{N}_i$ for $i\in[d]$. Then $\alpha$ is a primitive facet normal of a Minkowski sum of four staircase simplices, so $\kappa(\alpha)\leq4$; moreover $\lVert\alpha\rVert_\infty=\mathcal{N}_d$ and $\psi^{\,d-3}\leq \mathcal{N}_d\leq\psi^{\,d-1}$.
\end{theorem}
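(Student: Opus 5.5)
The plan mirrors the proof of \cref{thm:padovan}: write down an explicit family of $d-1$ zero-sum subsets and verify conditions \ref{thm:fnc:a}--\ref{thm:fnc:c} of \cref{thm:facetnormalcharacterization} with four classes. The recurrence $\mathcal{N}_i=\mathcal{N}_{i-1}+\mathcal{N}_{i-3}$ suggests the subsets
\[
B^2:=\{1,2\},\qquad B^3:=\{2,3\},\qquad B^i:=\{i-3,i-1,i\}\quad(4\leq i\leq d).
\]
There are exactly $d-1$ of them, and the largest element of $B^i$ is $i$.

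\emph{Zero sums.} For $B^2$ and $B^3$ the sums are $-1+1$ and $1-1$. For $i\geq4$, the indices $i-1$ and $i-3$ have parity opposite to $i$. Hence
\[
\alpha^\top\mathbf{e}_{B^i}=(-1)^i(\mathcal{N}_i-\mathcal{N}_{i-1}-\mathcal{N}_{i-3})=0
\]
by the defining recurrence.

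\emph{Linear independence.} Delete coordinate $1$ and order the rows by $i$. The resulting $(d-1)\times(d-1)$ matrix with columns $2,\ldots,d$ is lower triangular with unit diagonal, so the indicator vectors are linearly independent.

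\emph{Four classes.} Put $B^i$ into the class indexed by $i\bmod 4$, and take $i<j$ with $i\equiv j\pmod 4$, so $j-i\geq4$.
\begin{itemize}
\item If $j\geq4$, the only elements of $B^j$ that can lie in $B^i\subset\{1,\ldots,i\}$ are $j-3$ and $j-1$, since $j>i$.
\item When $i\geq4$, the possible coincidences $j-3\in\{i-3,i-1,i\}$ and $j-1=i$ give $j-i\in\{0,2,3,1\}$, none a positive multiple of $4$.
\item When $i\in\{2,3\}$, the set $B^i$ is contained in the pattern $\{i-3,i-1,i\}$ (truncated to $[d]$), so the same computation applies.
\end{itemize}
So the subsets in each class are pairwise disjoint, \ref{thm:fnc:c} holds with $n+1=4$, and $\kappa(\alpha)\leq4$ follows from \cref{thm:facetnormalcharacterization}.

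\emph{Remaining claims.} Primitivity is immediate from $|\alpha_1|=\mathcal{N}_1=1$. The identity $\lVert\alpha\rVert_\infty=\mathcal{N}_d$ follows because the $\mathcal{N}_i$ are nondecreasing. For $\psi^{i-3}\leq\mathcal{N}_i\leq\psi^{i-1}$, I would use induction on $i$. The base cases $i=1,2,3$ read $\psi^{-2},\psi^{-1},1\leq1\leq1,\psi,\psi^2$. For the step, both bounds follow from $\psi^{k-1}+\psi^{k-3}=\psi^{k-3}(\psi^2+1)=\psi^k$, which is $\psi^3=\psi^2+1$.

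The only step needing care is the disjointness check within classes. There the main risk is the boundary sets $B^2,B^3$ colliding with some $B^j$, $j\geq4$. I would handle this by explicit listing, as in \cref{thm:padovan}: $B^2$ meets only $B^4,B^5$, and $B^3$ meets only $B^5,B^6$, and none of these is in the same class.
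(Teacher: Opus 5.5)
Your proof is correct and follows the paper's argument essentially verbatim: the same subsets, the same triangularity argument, the same mod-$4$ coloring, and the same induction for the bracketing. The paper's disjointness check is shorter, noting that $B^i\subset\{i-3,\ldots,i\}$, so indices differing by at least $4$ give disjoint intervals. One harmless slip is in your redundant explicit listing: $B^3=\{2,3\}$ also meets $B^4=\{1,3,4\}$, but $B^4$ lies in class $0\neq3$, so the conclusion stands.
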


\begin{proof}
    Consider the subsets
    \[
    B^2:=\{1,2\},\qquad B^3:=\{2,3\},\qquad B^i:=\{i-3,i-1,i\}\quad(4\leq i\leq d),
    \]
    of which there are $d-1$. We verify conditions \ref{thm:fnc:a}--\ref{thm:fnc:c} of \cref{thm:facetnormalcharacterization} with four classes.

    \emph{Zero sums.} For $B^2$ and $B^3$, the sums $\alpha^\top\mathbf{e}_{B^i}$ are $-1+1$ and $1-1$. For $i\geq4$, the indices $i-3$ and $i-1$ have parity opposite to $i$, so $\alpha^\top\mathbf{e}_{B^i}=(-1)^i(\mathcal{N}_i-\mathcal{N}_{i-1}-\mathcal{N}_{i-3})=0$ by the recurrence.

    \emph{Linear independence.} Delete the coordinate $1$ and order the rows $\mathbf{e}_{B^2}^\top,\ldots,\mathbf{e}_{B^d}^\top$ by $i$. Since $i$ is the largest element of $B^i$, the resulting $(d-1)\times(d-1)$ matrix, with columns $2,\ldots,d$, is lower triangular with unit diagonal; hence the $d-1$ indicator vectors are linearly independent.

    \emph{Four classes.} Put $B^i$ into the class indexed by $i\bmod4$. Every $B^i$ is contained in the interval $\{i-3,\ldots,i\}$, and if $i<i'$ lie in the same class, then $i'-i\geq4$, so $i<i'-3$ and the intervals $\{i-3,\ldots,i\}$ and $\{i'-3,\ldots,i'\}$ are disjoint. Thus the subsets in each class are pairwise disjoint, and \ref{thm:fnc:c} holds with $n+1=4$.

    \emph{Conclusion.} By \cref{thm:facetnormalcharacterization}, $\alpha$ is a facet normal of a Minkowski sum of four staircase simplices, so $\kappa(\alpha)\leq4$. It is primitive because $\alpha_1=-\mathcal{N}_1=-1$, and $\lVert\alpha\rVert_\infty=\mathcal{N}_d$ because the $\mathcal{N}_i$ are nondecreasing. For the bracketing, multiplying $\psi^3=\psi^2+1$ by $\psi^{i-6}$ gives $\psi^{i-3}=\psi^{i-4}+\psi^{i-6}$, so the sequences $\psi^{i-3}$ and $\psi^{i-1}$ satisfy the same recurrence as $\mathcal{N}_i$\,. They bracket the initial values, $\psi^{-2},\psi^{-1},\psi^{0}\leq1=\mathcal{N}_1,\mathcal{N}_2,\mathcal{N}_3\leq\psi^{0},\psi^{1},\psi^{2}$, and if $\psi^{i-4}\leq \mathcal{N}_{i-1}$ and $\psi^{i-6}\leq \mathcal{N}_{i-3}$, then $\psi^{i-3}=\psi^{i-4}+\psi^{i-6}\leq \mathcal{N}_{i-1}+\mathcal{N}_{i-3}=\mathcal{N}_i$\,; the upper bound follows in the same way. Hence $\psi^{\,d-3}\leq \mathcal{N}_d\leq\psi^{\,d-1}$ by induction on $d$.
\end{proof}

\begin{corollary}[Exponential separation of three and four summands]\label{cor:separation}
    Let $d\geq3$.
    \begin{enumerate}[label=(\alph*),ref=(\alph*)]
        \item\label{cor:sep:a} $\mathfrak{h}_4(d)\geq \mathcal{N}_d\geq\psi^{\,d-3}$.
        \item\label{cor:sep:b} $\mathfrak{h}_4(d)/\mathfrak{h}_3(d)\geq\bigl(3^{2/3}/\psi^3\bigr)\bigl(\psi/3^{1/3}\bigr)^d$, which tends to infinity; consequently
        \[
        \limsup_{d\to\infty}\mathfrak{h}_3(d)^{1/d}\ \leq\ 3^{1/3}=1.4422\ldots\ <\ 1.4655\ldots=\psi\ \leq\ \liminf_{d\to\infty}\mathfrak{h}_4(d)^{1/d}.
        \]
        \item\label{cor:sep:c} The vector $\alpha$ of \cref{thm:narayana} satisfies $\kappa(\alpha)=4$, not merely $\kappa(\alpha)\leq4$, for every $d\geq9$.
    \end{enumerate}
\end{corollary}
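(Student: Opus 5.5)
Part (a) is immediate from \cref{thm:narayana}: the vector $\alpha$ there is a primitive facet normal of a sum of four staircase simplices with $\lVert\alpha\rVert_\infty=\mathcal{N}_d\geq\psi^{\,d-3}$. Hence $\mathfrak{h}_4(d)\geq\mathcal{N}_d$ by the definition of $\mathfrak{h}_4$.

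For (b), I combine (a) with the upper bound $\mathfrak{h}_3(d)\leq3^{(d-2)/3}$. For $d\geq4$ this is \cref{cor:H3}. For $d=3$ it follows from \cref{thm:inftynormofalpha}\ref{thm:size:b}, since $\mathfrak{B}(1)=1\leq3^{1/3}$. Dividing gives
\[
\frac{\mathfrak{h}_4(d)}{\mathfrak{h}_3(d)}\geq\frac{\psi^{d-3}}{3^{(d-2)/3}}=\frac{3^{2/3}}{\psi^3}\Bigl(\frac{\psi}{3^{1/3}}\Bigr)^{d}.
\]
This tends to infinity because $\psi>3^{1/3}$, which was checked before \cref{thm:narayana}. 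The two limit inequalities follow by taking $d$-th roots: the upper one from \cref{cor:H3}, and the lower one from $\mathcal{N}_d^{1/d}\geq\psi^{(d-3)/d}\to\psi$.

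For (c), \cref{thm:narayana} already gives $\kappa(\alpha)\leq4$, so it remains to exclude three summands. By monotonicity, if $\kappa(\alpha)\leq3$ then $\alpha$ is a facet normal of a sum of three staircase simplices. Then \cref{thm:inftynormofalpha}\ref{thm:size:b} gives $\mathcal{N}_d=\lVert\alpha\rVert_\infty\leq\mathfrak{B}(d-2)$, so it suffices to prove $\mathcal{N}_d>\mathfrak{B}(d-2)$ for all $d\geq9$. I would proceed in two steps.

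\begin{enumerate}[label=(\roman*)]
\item Base cases. Compute $\mathcal{N}_d$ for $d=9,10,11$, which are $19,28,41$ from the sequence $1,1,1,2,3,4,6,9,13,19,28,41$. Compare with $\mathfrak{B}(7),\mathfrak{B}(8),\mathfrak{B}(9)=12,18,27$ from \eqref{eq:Bformula}. In each case the strict inequality holds.
\item Induction. Use $\mathcal{N}_d=\mathcal{N}_{d-1}+\mathcal{N}_{d-3}$ together with $\mathfrak{B}(r)\leq\mathfrak{B}(r-1)+\mathfrak{B}(r-3)$ for $r\geq5$, recorded after \eqref{eq:Bformula}. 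With $r=d-2\geq10$ the strict inequality propagates.
\end{enumerate}

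The only real point is this comparison, since $\psi>3^{1/3}$ guarantees it only asymptotically. The recurrence inequality for $\mathfrak{B}$ is what makes the induction clean, so the main work is checking the small cases with the correct initial values.
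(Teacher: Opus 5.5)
Your proposal follows the paper's proof. Part (a) comes from \cref{thm:narayana}. Part (b) divides by the bound $\mathfrak{B}(d-2)\le 3^{(d-2)/3}$ from \cref{thm:inftynormofalpha}\ref{thm:size:b}, which already covers $d=3$, so the separate case split is harmless. Part (c) proves $\mathcal{N}_d>\mathfrak{B}(d-2)$ from three base cases and the inequality $\mathfrak{B}(r)\le\mathfrak{B}(r-1)+\mathfrak{B}(r-3)$, used for $r=d-2\ge10$ with strong induction from $d\ge12$.

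There is one concrete error, in the step you yourself flagged as delicate: your base-case values are shifted by one index. With $\mathcal{N}_1=\mathcal{N}_2=\mathcal{N}_3=1$, your list $1,1,1,2,3,4,6,9,13,19,28,41$ is $\mathcal{N}_1,\ldots,\mathcal{N}_{12}$. Hence $(\mathcal{N}_9,\mathcal{N}_{10},\mathcal{N}_{11})=(13,19,28)$, not $(19,28,41)$; your numbers are $\mathcal{N}_{10},\mathcal{N}_{11},\mathcal{N}_{12}$.

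The strict comparisons with $(\mathfrak{B}(7),\mathfrak{B}(8),\mathfrak{B}(9))=(12,18,27)$ still hold, so the conclusion survives, but only with margin $1$ in each case. The threshold is also sharp: $\mathcal{N}_d=\mathfrak{B}(d-2)$ for every $3\le d\le 8$, for example $\mathcal{N}_8=9=\mathfrak{B}(6)$. With your shifted values the comparison would wrongly appear strict in lower dimensions as well. This is why the paper needs the separate atom arguments of \cref{prop:fourinfive} for $d=5,\ldots,8$.

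With the corrected values, your proof is complete.
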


\begin{proof}
    \ref{cor:sep:a} is \cref{thm:narayana}. For \ref{cor:sep:b}, \cref{thm:inftynormofalpha}\ref{thm:size:b} gives $\mathfrak{h}_3(d)\leq \mathfrak{B}(d-2)\leq3^{(d-2)/3}$, so
    \[
    \frac{\mathfrak{h}_4(d)}{\mathfrak{h}_3(d)}\ \geq\ \frac{\psi^{\,d-3}}{3^{(d-2)/3}}\ =\ \frac{3^{2/3}}{\psi^3}\Bigl(\frac{\psi}{3^{1/3}}\Bigr)^{d},
    \]
    which tends to infinity because $\psi>3^{1/3}$; taking $d$-th roots gives the limit inequalities.

    For \ref{cor:sep:c}, we claim that $\mathcal{N}_d>\mathfrak{B}(d-2)$ for every $d\geq9$. Then $\mathcal{N}_d>\mathfrak{h}_3(d)$ by \cref{thm:inftynormofalpha}\ref{thm:size:b}, so $\alpha$ is not a facet normal of any Minkowski sum of at most three staircase simplices, and $\kappa(\alpha)=4$ because $\kappa(\alpha)\leq4$ by \cref{thm:narayana}. The three base cases are
    \[
    (\mathcal{N}_9\,,\mathcal{N}_{10}\,,\mathcal{N}_{11})=(13,19,28)\ >\ (12,18,27)=\bigl(\mathfrak{B}(7),\mathfrak{B}(8),\mathfrak{B}(9)\bigr),
    \]
    and for $d\geq12$, using the induction hypotheses $\mathcal{N}_{d-1}>\mathfrak{B}(d-3)$ and $\mathcal{N}_{d-3}>\mathfrak{B}(d-5)$ and the inequality $\mathfrak{B}(r)\leq \mathfrak{B}(r-1)+\mathfrak{B}(r-3)$ for $r=d-2\geq5$, noted after \eqref{eq:Bformula},
    \[
    \mathcal{N}_d=\mathcal{N}_{d-1}+\mathcal{N}_{d-3}\ >\ \mathfrak{B}(d-3)+\mathfrak{B}(d-5)\ \geq\ \mathfrak{B}(d-2),
    \]
    which completes the induction.
\end{proof}

Because $\mathfrak{h}_m(d)$ is nondecreasing in $m$ (\cref{sec:size}), \cref{cor:separation} also gives, for every $m\geq4$,
\[
\liminf_{d\to\infty}\mathfrak{h}_m(d)^{1/d}\ \geq\ \psi:
\]
every summand count from four upwards is exponentially separated from three.

We close this section with exact values in small dimensions, obtained by combining the upper bounds of \cref{sec:size} with the constructions above. For small orders, the bound of \cref{lem:incompleteclass} can be sharpened.

\begin{lemma}\label{lem:smallorders}
    Let $s\in\{4,5\}$, let $M$ be a nonsingular $s\times s$ zero-one matrix, and suppose that $[s]$ is partitioned into three classes such that $\operatorname{supp}(M_{\ell,\cdot})\cap\operatorname{supp}(M_{\ell',\cdot})=\varnothing$ for all $\ell\neq\ell'$ in the same class. Then $|\det M|\leq2$ if $s=4$, and $|\det M|\leq3$ if $s=5$.
\end{lemma}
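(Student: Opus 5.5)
The plan is to sharpen the argument of \cref{lem:incompleteclass} by a short case analysis on the class sizes. Throughout, write the three classes as $J_0,J_1,J_2$ and apply \cref{lem:productbound} with $\mathcal R$ equal to one class and the other two classes as $\mathcal T_1,\mathcal T_2$. Two reductions come first.

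\begin{enumerate}[label=(\roman*)]
\item If some class is empty, \cref{lem:productbound} with $\mathcal R=\varnothing$ gives $|\det M|\leq1$. So all three classes may be assumed nonempty.
\item For any class $J$, $|\det M|\leq\prod_{\ell\in J}|\operatorname{supp}(M_{\ell,\cdot})|$. The row supports within $J$ are disjoint, so their sizes sum to $|C(J)|$. As in \cref{lem:incompleteclass}, nonsingularity forces $|C(J)|\leq s-1$ for all classes but at most one.
\end{enumerate}

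\emph{The case $s=4$.} The class sizes are $(2,1,1)$ up to order. A class $J$ with $|C(J)|\leq3$ gives a bound of $2$ unless $J$ is a single row whose support has exactly $3$ columns. So the only obstruction is that every class $J$ with $|C(J)|\leq3$ is a singleton $\{\ell\}$ with $|\operatorname{supp}(M_{\ell,\cdot})|=3$. Since at most one class covers all four columns, this leaves the following residual cases.
\begin{itemize}
\item Both singletons have supports of size $3$, and the pair class, with supports $a,b$, has $a+b=4$. If $(a,b)=(1,3)$, taking the pair as $\mathcal R$ already gives the bound $3$, so the real residue is $a=b=2$ and, when $(a,b)=(1,3)$, the last unit has to be won elsewhere.
\item One singleton row is $\mathbf{e}^\top$. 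Then the other two classes cannot both cover all columns.
\end{itemize}
In these few configurations I would argue directly with the cofactor expansion along a singleton row $\ell$:
\[
\det M=\sum_{c}M_{\ell,c}\,\gamma_c ,
\]
where $\gamma$ is the generalized cross product (\cref{lem:crossproduct}) of the remaining three rows. Those three rows form a matrix that is totally unimodular after negating one class (the argument in the proof of \cref{lem:productbound}), so $\gamma\in\{-1,0,1\}^4$ and $\gamma$ spans their kernel. The goal is to show that $\gamma$ cannot equal $\pm1$ with a common sign on the three columns of $\operatorname{supp}(M_{\ell,\cdot})$.
\begin{itemize}
\item A kernel vector of a row with two ones has opposite signs on those two columns.
\item Since the other singleton has support of size $3$ and the pair class has disjoint supports, some row among the other three has two ones inside $\operatorname{supp}(M_{\ell,\cdot})$ whenever $\gamma$ is nonzero on that support.
\end{itemize}
This forces a sign change, giving $|\det M|\leq2$.

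\emph{The case $s=5$.} The same method applies, with more cases. A class $J$ with $|C(J)|\leq4$ gives product at most $\mathfrak{B}(4)=4$, and the value $4$ arises only from one row of support $4$ or two rows of supports $2,2$. Only these extremal configurations need to be excluded, together with the configuration in which the product $3$ is not the binding constraint. Again there are two tools.
\begin{itemize}
\item Choose a different class as $\mathcal R$.
\item Expand along $\mathcal R$ and use that the complementary minors come from a totally unimodular matrix whose kernel or cofactor vectors must alternate in sign on any row of support $2$.
\end{itemize}
I would also use the injective-map count $\nu$ of \cref{lem:productbound} rather than the product: two rows of supports $2,2$ in $\mathcal R$ that meet other classes' rows reduce $\nu$ once the complementary minors are forced to vanish.

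The main obstacle is the $s=5$ bookkeeping. The residual configurations, with class sizes $(3,1,1)$ or $(2,2,1)$ and supports realizing $4$, need a sign-cancellation argument rather than a counting bound, and it is easy to miss a case. If the hand analysis becomes unwieldy, my fallback is the following exhaustive check, stated as the proof.
\begin{itemize}
\item Up to row and column permutations, enumerate all $s\times s$ zero-one matrices admitting such a $3$-class partition. There are at most $2^{25}$ matrices for $s=5$.
\item Discard those with a class covering every column twice, since these are singular.
\item Compute the determinants.
\end{itemize}
As a sanity check, the bounds should be attained: $2$ by the matrix from $\alpha=(1,1,-1,-2)$ and $3$ by the $d=6$ Padovan example of \cref{thm:padovan}.
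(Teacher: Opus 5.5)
Your case $s=4$ goes through, with one correction to the sign step. The only real residue is this: the pair class covers all four columns, and both singleton rows have supports of size $3$. Your second bullet is vacuous, because if a singleton row is $\mathbf{e}^\top$, the pair class misses a column and already gives product at most $2$.

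In that residue, expand along a singleton row $\ell$. The other singleton's support cannot equal $\operatorname{supp}(M_{\ell,\cdot})$, or $M$ would be singular. So it consists of exactly two columns of $\operatorname{supp}(M_{\ell,\cdot})$ plus the fourth column. Orthogonality to $\gamma\in\{-1,0,1\}^4$ then forbids $\gamma$ from taking the same value $\pm1$ on those two columns, and $|\det M|\le2$ follows. The correct inference here is ``not both equal to the same $\pm1$'', not ``opposite signs'', because this row has three ones.

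The genuine gap is $s=5$, which is the substantive half of the lemma. You correctly locate the residual configurations: every class missing a column has product exactly $4$. But you then only say that a sign-cancellation argument is needed, without giving one.

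Concretely, suppose a class has rows with supports $\{1,2\}$ and $\{3,4\}$. The Laplace expansion along it has four terms, each $\pm$ a $3\times3$ minor of a totally unimodular matrix. The whole content of the bound is that these four terms cannot all be nonzero with one sign, and nothing in your proposal shows this.

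Your refinement via $\nu$ cannot supply it. $\nu$ depends only on the supports of the rows in $\mathcal R$. For two disjoint supports of size $2$, every admissible map is injective, so $\nu=4$ regardless of the other rows. The exhaustive enumeration would be a legitimate proof if carried out, but as written it is only announced.

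The paper closes the gap with two reductions.

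(A) A row with a single $1$ can be expanded away, leaving a zero-one matrix of order $s-1$ whose rows inherit the classes. For $s=5$, this plus the case $s=4$ lets you assume every support has size at least $2$. Then no class has three rows, since that would need six columns, so the class sizes are $(2,2,1)$ and your $(3,1,1)$ residue disappears. One of the two pair classes misses a column, so after relabeling its supports are $\{1,2\}$ and $\{3,4\}$.

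(B) Subtract column $1$ from column $2$ and column $3$ from column $4$. This turns those two rows into $\mathbf{e}_1^\top$ and $\mathbf{e}_3^\top$, so $|\det M|=|\det N|$, where $N$ is the $3\times3$ matrix formed by the remaining rows in columns $2,4,5$. This packages your four Laplace terms into a single determinant. Disjointness within the other pair class does two things. It makes the $2\times2$ block of $N$ on those rows in columns $2,4$ totally unimodular. It also restricts their entries in column $5$ to $(0,0)$, $(1,0)$, or $(0,1)$. Expanding along that column then gives $|\det N|\le3$.

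The same two reductions also give $s=4$ at once. After (A), either $|\det M|\le\delta_3=2$, or some class has two rows with disjoint supports of size at least $2$ in four columns, hence supports $\{1,2\}$ and $\{3,4\}$. Then (B) leaves a $2\times2$ matrix with entries in $\{-1,0,1\}$, whose determinant is at most $2$ in absolute value.
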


\begin{proof}
    No row of $M$ is zero, as $M$ is nonsingular. We use two reductions. (A) If some row has exactly one entry $1$, expanding along it gives $|\det M|=|\det M'|$ for an $(s-1)\times(s-1)$ zero-one submatrix $M'$ whose rows inherit the three classes with disjoint supports. (B) If two rows in the same class have supports $\{1,2\}$ and $\{3,4\}$ (after relabeling columns), then subtracting column $1$ from column $2$ and column $3$ from column $4$, which preserves $\det M$, turns these rows into $\mathbf{e}_1^\top$ and $\mathbf{e}_3^\top$; expanding along them gives $|\det M|=|\det N|$, where $N$ consists of the other $s-2$ rows in columns $2$, $4$, and $5,\ldots,s$, with entries in $\{-1,0,1\}$ in columns $2$ and $4$ (differences of zero-one entries) and entries of $M$ elsewhere.

    \emph{The case $s=4$.} By (A), we may assume that every row has at least two entries $1$ (otherwise $|\det M|\leq\delta_3=2$). Some class then contains two rows, whose supports are disjoint subsets of $[4]$ of size at least $2$, hence of size $2$; by (B), $|\det M|=|\det N|$ for a $2\times2$ matrix $N$ with entries in $\{-1,0,1\}$, so $|\det M|\leq2$.

    \emph{The case $s=5$.} By (A) and the case $s=4$, we may assume that every row has at least two entries $1$ (otherwise $|\det M|\leq2$, or $\det M=0$ if $M'$ is singular). Then a class contains at most two rows, so the classes have sizes $2,2,1$. The two classes of size $2$ cannot both cover all five columns, because then both would sum to $\mathbf{e}^\top$ (as in the proof of \cref{lem:incompleteclass}); let $J$ be one that does not. Its two supports are disjoint, of size at least $2$, and contained in at most four columns, so both have size $2$, and by (B), $|\det M|=|\det N|$ for the $3\times3$ matrix
    \[
    N=\begin{pmatrix}a&b&c\\a'&b'&c'\\x&y&z\end{pmatrix}
    \]
    formed by the two rows $\ell_1\,,\ell_2$ of the other class of size $2$ and the row $\ell_3$ of the class of size $1$, in columns $2$, $4$, and $5$. Because the supports of $\ell_1$ and $\ell_2$ are disjoint, each of $(M_{\ell_11},M_{\ell_21})$ and $(M_{\ell_12},M_{\ell_22})$ lies in $\{0,\mathbf{e}_1,\mathbf{e}_2\}$, so $(a,a')\in\{0,\pm\mathbf{e}_1,\pm\mathbf{e}_2,\pm(\mathbf{e}_1-\mathbf{e}_2)\}$; likewise $(b,b')$, while $(c,c')\in\{(0,0),(1,0),(0,1)\}$. A $2\times2$ matrix with columns in $\{0,\pm\mathbf{e}_1,\pm\mathbf{e}_2,\pm(\mathbf{e}_1-\mathbf{e}_2)\}$ is totally unimodular (a submatrix of the incidence matrix of a directed graph, see \cite{NWbook}), so $|ab'-a'b|\leq1$; moreover $x,y\in\{-1,0,1\}$ and $z\in\{0,1\}$. Expanding along the third column,
    \[
    \det N=c\,(a'y-b'x)-c'\,(ay-bx)+z\,(ab'-a'b).
    \]
    If $(c,c')=(0,0)$, then $|\det N|\leq|ab'-a'b|\leq1$; if $(c,c')=(1,0)$, then $|\det N|\leq|a'y|+|b'x|+|ab'-a'b|\leq3$; and $(c,c')=(0,1)$ is symmetric.
\end{proof}

\begin{proposition}\label{prop:H3small}
    $\mathfrak{h}_3(2)=\mathfrak{h}_3(3)=1$, $\mathfrak{h}_3(4)=\mathfrak{h}_3(5)=2$, and $\mathfrak{h}_3(6)=3$.
\end{proposition}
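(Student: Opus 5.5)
Each value will come from matching an upper bound already proved with an explicit facet normal. The general tools are \cref{thm:inftynormofalpha}\ref{thm:size:b} and \cref{cor:deltabound}, together with \cref{lem:smallorders} in dimensions $5$ and $6$. The constructions come from \cref{thm:padovan}, \cref{ex:facetnormalconstruction}, and the example $(1,1,-1,-2)$ at the start of this section.

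\emph{Upper bounds.} For $d=2,3$, \cref{cor:deltabound} gives $\mathfrak{h}_3(d)\leq\delta_{d-1}=1$. For $d=4$, either $\delta_3=2$ or $\mathfrak{B}(2)=2$ from \cref{thm:inftynormofalpha}\ref{thm:size:b} gives $\mathfrak{h}_3(4)\leq2$. For $d=5,6$, I repeat the proof of \cref{thm:inftynormofalpha}\ref{thm:size:b} but replace \cref{lem:incompleteclass} by \cref{lem:smallorders}:
\begin{enumerate}[label=(\roman*)]
\item by \cref{thm:facetnormalcharacterization}, take blocks $B_1,\ldots,B_{d-1}$ with three classes;
\item by \cref{lem:crossproduct}, we have $|\alpha_i|\leq|\det\widehat A^i|$;
\item each $\widehat A^i$ is an $s\times s$ zero-one matrix with $s=d-1\in\{4,5\}$, and its rows, whose supports are the sets $B_j\setminus\{i\}$, inherit the three classes with pairwise disjoint supports.
\end{enumerate}
If $\widehat A^i$ is singular, then $\alpha_i=0$. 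Otherwise \cref{lem:smallorders} gives $|\det\widehat A^i|\leq2$ for $d=5$ and $\leq3$ for $d=6$. This is the step where the sharpened lemma is essential, since $\mathfrak{B}(3)=3$ and $\mathfrak{B}(4)=4$ are too weak. The remaining bookkeeping is routine.

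\emph{Lower bounds.}
\begin{itemize}
\item $d=2$: the vector $\mathbf{e}_1-\mathbf{e}_2$ is a facet normal of a single staircase simplex, and by \cref{lem:persistenceaddition} it remains one after adding summands.
\item $d=3$: $(1,-1,0)$ from \cref{ex:facetnormalconstruction} works in the same way.
\item $d=4$: the vector $(1,1,-1,-2)$ discussed above has $\kappa=3$ and $\lVert\alpha\rVert_\infty=2$.
\item $d=5$: \cref{thm:padovan} gives the entries $\mathrm{Pad}_0,\ldots,\mathrm{Pad}_4$, so $\lVert\alpha\rVert_\infty=\mathrm{Pad}_4=2$. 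Zero padding the $d=4$ example also works.
\item $d=6$: \cref{thm:padovan} gives only $\mathrm{Pad}_5=3$, and this matches the upper bound.
\end{itemize}
In every case $\kappa\leq3$, so the vector is a facet normal of a sum of exactly three summands, because repeated summands are allowed.

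\emph{Main obstacle.} With \cref{lem:smallorders} available, there is essentially none. The one thing to check is that \cref{lem:smallorders} applies to $\widehat A^i$: only nonsingular matrices need to be bounded, and the three-class disjointness survives the deletion of column $i$. Both points hold, so the proposition follows.
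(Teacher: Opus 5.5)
Your proof is correct and follows the paper's argument. The upper bounds come from \cref{cor:deltabound} for $d\le4$ and from \cref{lem:smallorders} applied to the nonsingular $\widehat A^i$ for $d=5,6$, and the lower bounds come from explicit three-summand normals, with \cref{thm:padovan} supplying the decisive $d=6$ case. The only differences are cosmetic choices of witnesses for $d\le4$: the paper uses $\mathbf{e}_i$ for $d=2,3$ and the Padovan vector for $d=4$.
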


\begin{proof}
    \emph{Upper bounds.} For $d=2,3,4$, \cref{cor:deltabound} gives $\mathfrak{h}_3(d)\leq\delta_{d-1}=1,1,2$. For $d=5,6$, let $\alpha$ be a primitive facet normal of a Minkowski sum of three staircase simplices, and fix $i\in[d]$ with $\alpha_i\neq0$. As in the proof of \cref{thm:inftynormofalpha}\ref{thm:size:b}, $|\alpha_i|\leq|\det\widehat A^i|$, where $\widehat A^i$ is a zero-one matrix of order $d-1$ whose rows are partitioned into three classes with pairwise disjoint supports within each class. If $\widehat A^i$ is singular, then $\det\widehat A^i=0$; otherwise \cref{lem:smallorders} gives $|\det\widehat A^i|\leq2$ for $d=5$ and $|\det\widehat A^i|\leq3$ for $d=6$.

    \emph{Lower bounds.} For $d=2,3$, every $\mathbf{e}_i$ is a facet normal of a single staircase simplex (the facet $x_{\sigma_d}\leq1$ of $\Delta_\sigma$ with $\sigma_d=i$, see \cref{subsec:order}), hence of a sum of three (\cref{lem:persistenceaddition}). For $d=4,5,6$, \cref{thm:padovan} gives primitive facet normals of Minkowski sums of three staircase simplices with $\lVert\alpha\rVert_\infty=\mathrm{Pad}_{d-1}=\mathrm{Pad}_3\,,\mathrm{Pad}_4\,,\mathrm{Pad}_5=2,2,3$.
\end{proof}

For comparison, \cref{thm:inftynormofalpha} gives $\mathfrak{h}_3(5)\leq\min\{\mathfrak{B}(3),\delta_4\}=3$ and $\mathfrak{h}_3(6)\leq\min\{\mathfrak{B}(4),\delta_5\}=4$. For $d=7$, the bounds of \cref{thm:inftynormofalpha}\ref{thm:size:b} and \cref{thm:padovan} give $4=\mathrm{Pad}_6\leq \mathfrak{h}_3(7)\leq \mathfrak{B}(5)=6$.

\begin{proposition}[Four summands in dimension five]\label{prop:fourinfive}
    \begin{enumerate}[label=(\alph*),ref=(\alph*)]
        \item\label{prop:ff:a} The least dimension in which some $\alpha$ has $\kappa(\alpha)=4$ is $d=5$.
        \item\label{prop:ff:b} $\mathfrak{h}_4(4)=\mathfrak{h}_3(4)=2$, whereas $\mathfrak{h}_4(5)=3>2=\mathfrak{h}_3(5)$.
        \item\label{prop:ff:c} The vector $\alpha$ of \cref{thm:narayana} satisfies $\kappa(\alpha)=4$ for every $d\geq5$.
    \end{enumerate}
\end{proposition}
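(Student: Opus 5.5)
For \ref{prop:ff:a} I would start from the bound $\kappa(\alpha)\le d-1$ of \cref{cor:anysum}. It shows that no $\alpha$ in dimension $d\le4$ can have $\kappa(\alpha)=4$. For $d=5$, take the vector of \cref{thm:narayana}, namely $\alpha=(-1,1,-1,2,-3)$, since $\mathcal N_1,\ldots,\mathcal N_5=1,1,1,2,3$. It has $\kappa(\alpha)\le4$ and $\lVert\alpha\rVert_\infty=3>2=\mathfrak h_3(5)$ by \cref{prop:H3small}. Hence it is not a facet normal of any sum of three summands, so $\kappa(\alpha)=4$.

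For \ref{prop:ff:b}, since $\mathfrak h_m(d)$ is constant for $m\ge d-1$, I get $\mathfrak h_4(4)=\mathfrak h_3(4)=2$ from \cref{prop:H3small}. For $d=5$, the vector above gives $\mathfrak h_4(5)\ge3$, and \cref{cor:deltabound} gives $\mathfrak h_4(5)\le\delta_4=3$.

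For \ref{prop:ff:c}, the plan is to show $\mathcal N_d>\mathfrak h_3(d)$, which yields $\kappa(\alpha)=4$ as in \cref{cor:separation}\ref{cor:sep:c}. That corollary already covers $d\ge9$. The case $d=5$ is done above, and $d=6$ follows from $\mathcal N_6=4>3=\mathfrak h_3(6)$ (\cref{prop:H3small}).

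The main obstacle is $d=7,8$, where the general bound is not strict: $\mathcal N_7=6=\mathfrak B(5)$ and $\mathcal N_8=9=\mathfrak B(6)$. Here I would use more than the size of one entry. Suppose $\alpha$ were realized with three classes, and let $A$ be the resulting indicator matrix. By \cref{lem:crossproduct}, $\det\widehat A^i=\pm g\alpha_i$ for every $i$. Then $|\det\widehat A^d|=g\,\mathcal N_d\ge\mathfrak B(d-2)$, so equality must hold throughout the proof of \cref{lem:incompleteclass}, with $g=1$.

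I would then analyze these equality cases. The incomplete class $J$ must have disjoint supports whose sizes realize $\mathfrak B(d-2)$: sizes $\{2,3\}$ covering all of $[d]\setminus\{d\}$ except one column when $d=7$, and $\{3,3\}$ or similar when $d=8$. The Laplace expansion in \cref{lem:productbound} must also have every term contributing with the same sign and with totally unimodular cofactors equal to $\pm1$.

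These conditions strongly constrain the zero-sum blocks. I would then check them against the short explicit list of zero-sum subsets of $(-1,1,-1,2,-3,4,-6)$ and of $(-1,1,-1,2,-3,4,-6,9)$. The goal is to show that no basis of $\alpha^\perp$ formed by such subsets is $3$-colorable in the required way. If the equality analysis becomes unwieldy, the fallback is a finite check: enumerate the zero-sum subsets (at most $2^d-1$), list the bases of $\alpha^\perp$ among them, and verify that each intersection graph has chromatic number $4$, which by \cref{cor:anysum} is exactly the statement $\kappa(\alpha)=4$.
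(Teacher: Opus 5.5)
Your arguments for (a), (b), and for (c) with $d\in\{5,6\}$ or $d\geq9$ are correct. In (a), using the Narayana vector $(-1,1,-1,2,-3)$ together with $\mathfrak{h}_3(5)=2$ is a valid substitute for the paper's $(1,-1,-1,-1,-1)$, and in (b) the constancy of $\mathfrak{h}_m(4)$ for $m\geq3$ is a valid substitute for the monotonicity-plus-$\delta_3$ argument.

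\textbf{The gap is $d=7,8$ in (c)}, which is exactly where an idea is needed. You correctly observe that $\mathcal{N}_7=\mathfrak{B}(5)$ and $\mathcal{N}_8=\mathfrak{B}(6)$ force $g=1$ and equality in \cref{lem:incompleteclass}. After that, you only announce that ``these conditions strongly constrain the zero-sum blocks'' and offer an enumeration of bases and colorings that you do not carry out. Neither is an argument, so $\kappa(\alpha)=4$ is not established for $d=7,8$. Also, the enumeration would have to show that every basis has chromatic number \emph{at least} $4$. Requiring exactly $4$ is false: for $d=7$, the basis $\{1,3,4\},\{2,4,5\},\{1,5,6\},\{3,5,6\},\{4,6,7\},\{1,2,3,5,6\}$ has a complete intersection graph on six vertices.

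\textbf{How the paper closes the gap.} It refines each $B_j$ into atoms (minimal nonempty zero-sum subsets), keeping each atom in the class of its $B_j$. This preserves disjointness within classes and keeps the span equal to $\alpha^\perp$. So $\kappa(\alpha)\leq3$ would give three families of pairwise disjoint atoms of total rank $d-1$.
\begin{itemize}
\item For $d=7$ the atoms are $\{1,2\},\{2,3\},\{1,3,4\},\{2,4,5\},\{1,5,6\},\{3,5,6\},\{4,6,7\}$; for $d=8$ one adds $\{5,7,8\}$.
\item The only two-element atoms intersect, so three disjoint atoms need at least $8$ coordinates. For $d=7$ this is impossible. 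For $d=8$ they would cover $[8]$, contradicting $\sum_i\alpha_i=5\neq0$.
\item Hence each family has at most two atoms. For $d=8$ the rank is at most $6<7$. For $d=7$ it is at most $5<6$, because $\{1,3,4\}$ and $\{2,4,5\}$ meet every other atom.
\end{itemize}

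\textbf{How your own route could be completed.} It needs one observation you did not make. Since $\widehat A^d$ is nonsingular, at most one class covers all its columns (proof of \cref{lem:incompleteclass}). So at least two classes are incomplete, and each must attain $\mathfrak{B}(d-2)$ by itself.
\begin{itemize}
\item For $d=8$ this needs two disjoint $3$-element supports among the sets $B_j\setminus\{8\}$. The only candidates, $\{1,3,4\},\{2,4,5\},\{1,5,6\},\{3,5,6\},\{4,6,7\}$, pairwise intersect.
\item For $d=7$ it needs disjoint supports of sizes $2$ and $3$. These are realized only by the pair $\{1,2\},\{3,5,6\}$ and the pair $\{2,3\},\{1,5,6\}$. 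The two pairs have the same union, so two such classes give a linear dependence among the rows of $A$.
\end{itemize}
Without this observation, or the atom argument, the Laplace-sign analysis you propose would be considerably messier.
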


\begin{proof}
    \ref{prop:ff:a} By \cref{cor:anysum}, $\kappa(\alpha)\leq d-1\leq3$ whenever $d\leq4$ and $\kappa(\alpha)<\infty$, while $\kappa(1,-1,-1,-1,-1)=4$ by \cref{cor:anysum}.

    \ref{prop:ff:b} For $d=4$, monotonicity in the number of summands (\cref{sec:size}), \cref{cor:deltabound}, and \cref{prop:H3small} give $2=\mathfrak{h}_3(4)\leq \mathfrak{h}_4(4)\leq\delta_3=2$. For $d=5$, \cref{thm:inftynormofalpha}\ref{thm:size:c} and \cref{cor:deltabound} give $\mathfrak{h}_4(5)\leq\min\{12^{(5-1)/4},\delta_4\}=3$. For the lower bound, let $\alpha$ be the vector of \cref{thm:narayana} for $d=5$, that is, $\alpha=(-1,1,-1,2,-3)$, since $\mathcal{N}_1\,,\ldots,\mathcal{N}_5=1,1,1,2,3$. Its positive entries are $\alpha_2=1$ and $\alpha_4=2$, so a nonempty $B\subset[5]$ with $\alpha^\top\mathbf{e}_B=0$ has $B\cap\{2,4\}$ of sum $1$, $2$, or $3$, matched by a single $-1$, by $-1-1$, or by $-3$; hence the subsets with $\alpha^\top\mathbf{e}_B=0$ are exactly
    \[
    B_1=\{1,2\},\qquad B_2=\{2,3\},\qquad B_3=\{1,3,4\},\qquad B_4=\{2,4,5\}.
    \]
    A vector $x$ orthogonal to $\mathbf{e}_{B_1}\,,\ldots,\mathbf{e}_{B_4}$ satisfies $x_2=-x_1$, $x_3=x_1$, $x_4=-2x_1$, $x_5=3x_1$\,, so $x\in\Span\{\alpha\}$; hence the four indicator vectors are linearly independent, and with the four classes $\{1\},\{2\},\{3\},\{4\}$, \cref{thm:facetnormalcharacterization} gives $\kappa(\alpha)\leq4$. As $\alpha$ is primitive with $\lVert\alpha\rVert_\infty=3$, we get $\mathfrak{h}_4(5)\geq3$. (The four subsets pairwise intersect, so $\Gamma(B_1,\ldots,B_4)$ is complete and $\kappa(\alpha)=4$ by \cref{cor:anysum}; this also follows from \ref{prop:ff:c}.)

        \ref{prop:ff:c} By \cref{thm:narayana}, $\kappa(\alpha)\leq4$ and $\lVert\alpha\rVert_\infty=\mathcal{N}_d$\,, and by \cref{cor:separation}\ref{cor:sep:c}, $\kappa(\alpha)=4$ for $d\geq9$. For $d=5$ and $d=6$, \cref{prop:H3small} gives $\mathcal{N}_5=3>2=\mathfrak{h}_3(5)$ and $\mathcal{N}_6=4>3=\mathfrak{h}_3(6)$, so $\alpha$ is not a facet normal of any Minkowski sum of at most three staircase simplices, and $\kappa(\alpha)=4$.

    It remains to exclude three summands for $d=7$ and $d=8$. Call a nonempty subset $B\subset[d]$ with $\alpha^\top\mathbf{e}_B=0$ an \emph{atom} if it has no nonempty proper subset with zero $\alpha$-sum. Every nonempty subset with zero $\alpha$-sum is a disjoint union of atoms: remove an atom contained in it and iterate on the remainder, which again has zero $\alpha$-sum. Hence, if $\kappa(\alpha)\leq3$, then refining the subsets $B_1\,,\ldots,B_{d-1}$ of \cref{thm:facetnormalcharacterization} into atoms (compare the proof of \cref{lem:matchingreduction}) yields three families of pairwise disjoint atoms whose indicator vectors span $\alpha^\perp$ and therefore have rank $d-1$. We show that this is impossible.

    For $d=7$, $\alpha=(-1,1,-1,2,-3,4,-6)$, and the atoms are
    \begin{gather*}
    A_1=\{1,2\},\qquad A_2=\{2,3\},\qquad A_3=\{1,3,4\},\qquad A_4=\{2,4,5\},\\
    A_5=\{1,5,6\},\qquad A_6=\{3,5,6\},\qquad A_7=\{4,6,7\}.
    \end{gather*}
    Indeed, a zero-sum subset containing $7$ must contain $4$ and $6$ to offset $-6$, and $A_7$ is then zero-sum; one containing $6$ but not $7$ must offset $4$ by $-3$ together with one of the entries $-1$, giving $A_5$ or $A_6$\,; and on $[5]$ the atoms are $A_1\,,\ldots,A_4$ by part \ref{prop:ff:b}. The only two-element atoms, $A_1$ and $A_2$\,, intersect, so three pairwise disjoint atoms would use at least $2+3+3=8>7$ coordinates; thus every family of pairwise disjoint atoms has at most two members. Each of $A_3$ and $A_4$ meets every other atom. If one of the three families contains $A_3$ or $A_4$\,, that family is a singleton, and the three families contain at most $1+2+2=5$ distinct atoms; otherwise only the five atoms $A_1,A_2,A_5,A_6,A_7$ are available. In either case the rank is at most $5<6=d-1$.

    For $d=8$, $\alpha=(-1,1,-1,2,-3,4,-6,9)$. Again $A_1$ and $A_2$ are the only two-element atoms, because $\pm1$ are the only entries that occur with both signs, and they intersect. Three pairwise disjoint atoms would therefore have sizes $2,3,3$ and cover all eight coordinates, which is impossible because the entries of $\alpha$ sum to $5\neq0$. Hence every family of pairwise disjoint atoms has at most two members, three families contain at most six atoms, and the rank is at most $6<7=d-1$.
\end{proof}

Part \ref{prop:ff:c} does not determine $\mathfrak{h}_3(7)$ and $\mathfrak{h}_3(8)$: it shows that one particular vector of height $\mathcal{N}_7=6$, respectively $\mathcal{N}_8=9$, is not a three-summand facet normal, while the bounds $4\leq \mathfrak{h}_3(7)\leq6$ and $\mathfrak{h}_3(8)\leq9$ stand.

\section{Normals with entries in \texorpdfstring{$\{-1,0,1\}$}{\{-1,0,1\}}}\label{sec:kappa}
Sums of at most two staircase simplices have only primitive facet normals in $\{-1,0,1\}^d$ (\cref{thm:inftynormofalpha}\ref{thm:size:a}). In this section we determine $\kappa(\alpha)$ for every such vector; as a consequence, we characterize the facet normals of sums of two staircase simplices (\cref{cor:two}).

\begin{theorem}\label{thm:kappa}
Let $d\geq2$, let $\alpha\in\{-1,0,1\}^d\setminus\{0\}$, and let $p_+:=|\{i\in[d]:\alpha_i=1\}|$ and $p_-:=|\{i\in[d]:\alpha_i=-1\}|$.
\begin{enumerate}[label=(\alph*),ref=(\alph*)]
    \item\label{thm:kappa:a} If $p_+\,,p_-\geq1$, then, with $p_{\min}:=\min\{p_+\,,p_-\}$ and $p_{\max}:=\max\{p_+\,,p_-\}$,
    \[
    \kappa(\alpha)=\left\lceil\frac{p_++p_--1}{p_{\min}}\right\rceil=1+\left\lceil\frac{p_{\max}-1}{p_{\min}}\right\rceil.
    \]
    \item\label{thm:kappa:b} If $p_++p_-=1$, that is, $\alpha=\pm\mathbf{e}_i$ for some $i\in[d]$, then $\kappa(\alpha)=1$.
    \item\label{thm:kappa:c} Otherwise, that is, if $\alpha$ has at least two nonzero entries, all of the same sign, then $\kappa(\alpha)=\infty$.
\end{enumerate}
\end{theorem}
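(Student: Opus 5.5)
By the zero-padding lemma (\cref{lem:zeropadding}), we first reduce to the case where $\alpha$ has no zero entries, so $d=p_++p_-$. Part \ref{thm:kappa:b} then needs no further work: $\alpha=\pm\mathbf{e}_i$ is a facet normal of a single simplex, either $x_{\sigma_d}\le1$ or $-x_{\sigma_1}\le0$. If $d=1$ after reduction, the zero-padding lemma does not apply because it requires $|S|\ge2$. In that case we argue directly, taking the singletons of the zero coordinates as a single class. For part \ref{thm:kappa:c}, suppose all nonzero entries have the same sign and there are at least two of them. Then every nonempty zero-sum set consists of zero coordinates only. The span of such indicators is therefore contained in $\{x: x_i=0 \text{ for } \alpha_i\ne0\}$, which has dimension $d-p<d-1$. \Cref{cor:anysum} then gives $\kappa=\infty$.

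For part \ref{thm:kappa:a}, let $P$ and $N$ be the sets of coordinates with $\alpha_i=1$ and $\alpha_i=-1$, with $|P|=p_+$ and $|N|=p_-$. A zero-sum set is a set containing equally many elements of $P$ and of $N$. Every zero-sum set is a disjoint union of edges $\{i,j\}$ with $i\in P$ and $j\in N$. For the lower bound, take a family $B_1,\dots,B_{d-1}$ as in \cref{cor:anysum}, properly colored with $m$ colors. Each class consists of disjoint zero-sum sets, so it uses at most $p_{\min}$ of the sets. Hence $d-1\le m\,p_{\min}$, which gives $m\ge\lceil (p_++p_--1)/p_{\min}\rceil$.

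For the upper bound, we must construct $d-1$ sets with linearly independent indicators, split into $m=\lceil(d-1)/p_{\min}\rceil$ classes of disjoint sets. We use only edges of the complete bipartite graph $K_{P,N}$. The indicator vectors of the edges of a spanning tree of $K_{P,N}$ are linearly independent, since a forest in a bipartite graph has independent incidence vectors, and a spanning tree has exactly $d-1$ edges. So it suffices to find a spanning tree of $K_{p_+,p_-}$ whose edge set decomposes into $m$ matchings, each of size at most $p_{\min}$.

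The construction is where the real work lies, and it is the step I expect to be the main obstacle. Write $P=\{u_1,\dots,u_a\}$ with $a=p_{\min}$, and let $N=\{w_1,\dots,w_b\}$ be the larger side, so $b=p_{\max}$. I first try a ``caterpillar'' tree. Each $w_j$ is joined to $u_{\lceil j a/b\rceil}$, which distributes the $w$'s among the $u$'s in consecutive blocks whose sizes differ by at most one. Consecutive blocks are then linked by the extra edges $u_{k+1}w_{\text{last of block }k}$, for $k=1,\dots,a-1$. This gives $b+(a-1)=d-1$ edges, forming a tree. The maximum degree is at most $\lceil b/a\rceil+1$ on the $u$ side, and this should equal $1+\lceil (b-1)/a\rceil$ except in the divisible case. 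By K\"onig's edge-coloring theorem, a bipartite graph of maximum degree $\Delta$ is $\Delta$-edge-colorable, so the degree bound would give the required number of matchings.

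If the degrees are off by one, for instance when $a\mid b$, I would rebalance so that block sizes sum to $b$ while accounting for the linking edges. The condition is that each $u_k$ has total degree at most $m=1+\lceil(b-1)/a\rceil$ and each $w$ has degree at most $2\le m$. Since $u_k$ gets its block plus at most one linking edge, I choose block sizes $s_k$ with $\sum s_k=b$, $s_1\le m$, and $s_k\le m-1$ for $k\ge2$. This is possible because $m+(a-1)(m-1)\ge b$, which is equivalent to $a(m-1)\ge b-1$, which holds by the definition of $m$. Edge-coloring the tree with $m$ colors by K\"onig's theorem then yields the classes, and \cref{thm:facetnormalcharacterization} finishes the proof. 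The second formula in \ref{thm:kappa:a} is arithmetic: $\lceil (p_{\min}+p_{\max}-1)/p_{\min}\rceil=1+\lceil(p_{\max}-1)/p_{\min}\rceil$.
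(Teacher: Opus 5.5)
Your proof is correct, and for part (a) it takes a genuinely different route from the paper. The paper does not prove (a) directly. It specializes \cref{thm:twomagnitude} to $a=b=1$ via zero padding (\cref{rem:specialcases}(i)), so its upper bound rests on the matching-rank formula of \cref{thm:matchingrank}. That formula is proved non-constructively, with exterior algebra and the irreducibility of the hook representations of the symmetric groups. Your lower bound is exactly the paper's counting bound (\cref{rem:specialcases}(ii)). Your upper bound, however, replaces all of that machinery with an explicit spanning caterpillar of $K_{p_{\min},p_{\max}}$ of maximum degree $m$, edge-colored by K\"onig's theorem. This works because for $a=b=1$ the atoms are the edges of a bipartite graph, and the edges of a forest have independent incidence vectors.

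What your route buys is a short, self-contained, constructive certificate; the paper exhibits one only in \cref{ex:generic}. What the paper's route buys is generality. For coprime $(a,b)\neq(1,1)$ the atoms have $a+b\geq3$ elements, so there is no tree or edge-coloring structure to exploit, and when a side is saturated the counting bound need not be tight.

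Two small repairs are needed.
\begin{enumerate}[label=(\roman*)]
\item Your first caterpillar fails outside the divisible case as well. For $p_{\min}=2$ and $p_{\max}=3$, the rule $w_j\mapsto u_{\lceil jp_{\min}/p_{\max}\rceil}$ gives $u_2$ degree $3>m=2$. Present only the rebalanced version, which handles every case.
\item In the rebalanced version, require $s_k\geq1$ for $k<p_{\min}$, so that the linking edges exist. This is compatible with your other constraints because $p_{\max}\geq p_{\min}$.
\end{enumerate}
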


In particular, in case \ref{thm:kappa:a} we have $\kappa(\alpha)=1$ if and only if $p_{\max}=1$, that is, $\alpha=\mathbf{e}_i-\mathbf{e}_j$\,, and $\kappa(\alpha)=2$ if and only if $2\leq p_{\max}\leq p_{\min}+1$.

\begin{proof}
    The two expressions in \ref{thm:kappa:a} agree because $(p_{\min}+p_{\max}-1)/p_{\min}=1+(p_{\max}-1)/p_{\min}$\,. Part \ref{thm:kappa:a} is the case $a=b=1$ of \cref{thm:twomagnitude} in \cref{sec:twomagnitude}, combined with the zero-padding lemma (\cref{lem:zeropadding}); we carry out this specialization in \cref{rem:specialcases}(i). The proof of \cref{thm:twomagnitude} does not use \cref{thm:kappa} or \cref{cor:two}.

    \smallskip
    \emph{\ref{thm:kappa:b}.} If $\alpha=\mathbf{e}_i$\,, choose a permutation $\sigma$ with $\sigma_d=i$; then $x_i=x_{\sigma_d}\leq1$ describes a facet of $\Delta_\sigma$ (\cref{subsec:order}), with normal $\mathbf{e}_i$\,. If $\alpha=-\mathbf{e}_i$\,, choose $\sigma$ with $\sigma_1=i$; then $-x_i=-x_{\sigma_1}\leq0$ describes a facet of $\Delta_\sigma$\,, with normal $-\mathbf{e}_i$\,. Hence $\kappa(\alpha)=1$.

    \smallskip
    \emph{\ref{thm:kappa:c}.} Let $S$ be the support of $\alpha$ and $Z:=[d]\setminus S$. If $\alpha$ has at least two nonzero entries, all of the same sign, then every subset $B$ with $\alpha^\top\mathbf{e}_B=0$ satisfies $B\subset Z$, because $\alpha^\top\mathbf{e}_B$ is a sum of $|B\cap S|$ nonzero terms of one sign; so $\mathbf{e}_B\in\Span\{\mathbf{e}_i:i\in Z\}$, a space of dimension $|Z|\leq d-2$. Hence no $d-1$ such vectors are linearly independent, conditions \ref{thm:fnc:a} and \ref{thm:fnc:b} cannot hold together, and by \cref{thm:facetnormalcharacterization} no Minkowski sum of staircase simplices has facet normal $\alpha$: $\kappa(\alpha)=\infty$.
\end{proof}

\begin{corollary}\label{cor:two}
Let $d\geq2$. A primitive vector $\alpha\in\mathbb{Z}^d$ is a facet normal of the sum of some two staircase simplices in $\mathbb{R}^d$ if and only if $\alpha\in\{-1,0,1\}^d\setminus\{0\}$ and $\bigl|\sum_{i\in[d]}\alpha_i\bigr|\leq 1$.
\end{corollary}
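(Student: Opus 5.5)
The proof splits into two directions and leans entirely on \cref{thm:inftynormofalpha}\ref{thm:size:a} and \cref{thm:kappa}.

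\emph{Necessity.} Suppose $\alpha$ is a primitive facet normal of a sum of two staircase simplices. Then \cref{thm:inftynormofalpha}\ref{thm:size:a} gives $\alpha\in\{-1,0,1\}^d\setminus\{0\}$, and $\kappa(\alpha)\leq2$. We now read off the constraint from \cref{thm:kappa}. Case \ref{thm:kappa:c} is excluded, since there $\kappa(\alpha)=\infty$. In case \ref{thm:kappa:b} we have $\alpha=\pm\mathbf{e}_i$, so $|\sum_i\alpha_i|=1$. In case \ref{thm:kappa:a}, $\kappa(\alpha)\leq2$ means $\lceil(p_{\max}-1)/p_{\min}\rceil\leq1$, that is, $p_{\max}\leq p_{\min}+1$. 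This gives $|p_+-p_-|\leq1$, and since $\sum_i\alpha_i=p_+-p_-$, we get $|\sum_i\alpha_i|\leq1$.

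\emph{Sufficiency.} Suppose $\alpha\in\{-1,0,1\}^d\setminus\{0\}$ with $|p_+-p_-|\leq1$. Such an $\alpha$ is primitive. If only one sign occurs, then $|p_+-p_-|\leq1$ forces exactly one nonzero entry, so $\alpha=\pm\mathbf{e}_i$ and $\kappa(\alpha)=1$ by \cref{thm:kappa}\ref{thm:kappa:b}. Otherwise $p_+,p_-\geq1$ and $p_{\max}\leq p_{\min}+1$, so $\kappa(\alpha)\leq 1+\lceil p_{\min}/p_{\min}\rceil=2$ by \cref{thm:kappa}\ref{thm:kappa:a}.

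Once $\kappa(\alpha)\leq2$, the remark after \cref{thm:facetnormalcharacterization} applies: classes may be empty, and a repeated summand is allowed. Hence $\alpha$ is a facet normal of a sum of exactly two staircase simplices. In the case $\kappa(\alpha)=1$, take the two summands to be the same simplex, or add any simplex and use \cref{lem:persistenceaddition}.

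No step here is really an obstacle; the substantive input is \cref{thm:kappa}\ref{thm:kappa:a}, which in turn rests on \cref{thm:twomagnitude} and is taken as given. The only point needing care is the bookkeeping that identifies $\sum_i\alpha_i$ with $p_+-p_-$ and translates $\kappa(\alpha)\leq2$ exactly into $p_{\max}\leq p_{\min}+1$, including the degenerate single-sign cases.
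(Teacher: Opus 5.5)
Your proposal is correct and follows essentially the same route as the paper: you reduce via \cref{thm:inftynormofalpha}\ref{thm:size:a} and monotonicity in the number of summands to the condition $\alpha\in\{-1,0,1\}^d\setminus\{0\}$ with $\kappa(\alpha)\leq2$. You then read this off from the cases of \cref{thm:kappa} using $\sum_i\alpha_i=p_+-p_-$.
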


\begin{proof}
    By \cref{thm:inftynormofalpha}\ref{thm:size:a}, and by monotonicity in the number of summands, the primitive facet normals of sums of two staircase simplices are exactly the vectors $\alpha\in\{-1,0,1\}^d\setminus\{0\}$ with $\kappa(\alpha)\leq2$. Let $p_+$ and $p_-$ be as in \cref{thm:kappa}, so that $\sum_{i\in[d]}\alpha_i=p_+-p_-$\,. If $p_+\,,p_-\geq1$, \cref{thm:kappa}\ref{thm:kappa:a} gives $\kappa(\alpha)=1+\lceil(p_{\max}-1)/p_{\min}\rceil$, which is at most $2$ if and only if $p_{\max}-1\leq p_{\min}$\,, that is, $|p_+-p_-|\leq1$. If all nonzero entries of $\alpha$ have the same sign, then $\bigl|\sum_{i\in[d]}\alpha_i\bigr|=p_++p_-\geq1$, and \cref{thm:kappa}\ref{thm:kappa:b},\ref{thm:kappa:c} give $\kappa(\alpha)\leq2$ if and only if $p_++p_-=1$. In both cases, $\kappa(\alpha)\leq2$ if and only if $\bigl|\sum_{i\in[d]}\alpha_i\bigr|\leq1$.
\end{proof}

With $\hat\alpha:=\bigl(-\sum_{i}\alpha_i\,,\alpha_1\,,\ldots,\alpha_d\bigr)$, so that $\alpha^\top x=\hat\alpha^\top w$ under $x_i=w_i-w_0$\,, the condition of \cref{cor:two} says exactly that $\hat\alpha\in\{-1,0,1\}^{d+1}$. In this type-$A$ form, \cref{thm:inftynormofalpha}\ref{thm:size:a} can also be derived from the alternating-cycle description of intersections of root cones \cite[Proposition~3.6]{early2025when}; our determinant argument proves it directly, and \cref{thm:kappa} also determines $\kappa$.

\section{Two-magnitude normals}\label{sec:twomagnitude}

In this section we treat the vectors with entries $a$ and $-b$ for coprime positive integers $a$ and $b$, and determine $\kappa$ exactly; the case $a=b=1$ gives \cref{thm:kappa}\ref{thm:kappa:a}. Throughout, $a$ and $b$ are coprime positive integers, $p,q\geq1$, $d:=p+q$, and $\alpha\in\mathbb{Z}^d$ has $p$ entries equal to $a$, indexed by $S_+$\,, and $q$ entries equal to $-b$, indexed by $S_-$ (the value of $\kappa$ does not depend on how the entries are arranged, because permuting coordinates maps staircase simplices to staircase simplices). A subset $B\subset[d]$ has $\alpha^\top\mathbf{e}_B=a|B\cap S_+|-b|B\cap S_-|$; if this is zero, then $b$ divides $a|B\cap S_+|$, hence $|B\cap S_+|$, because $\gcd(a,b)=1$. So the nonempty subsets $B$ with $\alpha^\top\mathbf{e}_B=0$ are exactly those with
\[
|B\cap S_+|=bk\quad\text{and}\quad|B\cap S_-|=ak\qquad\text{for some integer }k\geq1;
\]
we call $B$ an \emph{atom} if $k=1$. Put
\[
\mu:=\min\{\lfloor p/b\rfloor,\lfloor q/a\rfloor\},\qquad
U:=1+(p-1)\,\mathbf{1}_{\{p>b\mu\}}+(q-1)\,\mathbf{1}_{\{q>a\mu\}}\,.
\]
Every atom has $b$ elements in $S_+$ and $a$ elements in $S_-$\,, so a set of pairwise disjoint atoms has at most $\min\{\lfloor p/b\rfloor,\lfloor q/a\rfloor\}=\mu$ members. We call the positive side \emph{saturated} if $p=b\mu$, and the negative side \emph{saturated} if $q=a\mu$; thus
\[
U=\begin{cases}
1, & \text{both sides saturated},\\
q, & \text{only the positive side saturated},\\
p, & \text{only the negative side saturated},\\
d-1, & \text{neither side saturated}.
\end{cases}
\]

A \emph{full matching} is a set of $\mu$ pairwise disjoint atoms; full matchings exist whenever $\mu\geq1$. For an integer $t\geq1$, let $\rho_t$ be the largest rank of the set of indicator vectors of the atoms in a union of $t$ full matchings, repetitions among the matchings allowed. The following theorem determines $\kappa$ exactly for every two-magnitude vector.

\begin{theorem}[Exact summand count]\label{thm:twomagnitude}
    Let $\alpha$ be as above. If $\mu=0$, then $\kappa(\alpha)=\infty$. If $\mu\geq1$, then
    \begin{equation}\label{eq:exactkappa}
    \kappa(\alpha)=\min\bigl\{t\geq1:\ t(\mu-1)+\min\{t,U\}\geq d-1\bigr\},
    \end{equation}
    where the minimum of the empty set is $\infty$. In particular, $\kappa(\alpha)<\infty$ if and only if
    \[
    p\geq b,\qquad q\geq a,\qquad(p>b\ \text{or}\ b=1),\qquad\text{and}\qquad(q>a\ \text{or}\ a=1),
    \]
    and in every finite case with $\mu=1$, $\kappa(\alpha)=d-1$. For $\mu\geq2$,
    \[
    \kappa(\alpha)=
    \begin{cases}
    \left\lceil\dfrac{p+q-2}{\mu-1}\right\rceil, & p=b\mu,\ q=a\mu,\\[2.2ex]
    \max\left\{\left\lceil\dfrac{p+q-1}{\mu}\right\rceil,\ \left\lceil\dfrac{p-1}{\mu-1}\right\rceil\right\}, & p=b\mu,\ q>a\mu,\\[2.2ex]
    \max\left\{\left\lceil\dfrac{p+q-1}{\mu}\right\rceil,\ \left\lceil\dfrac{q-1}{\mu-1}\right\rceil\right\}, & p>b\mu,\ q=a\mu,\\[2.2ex]
    \left\lceil\dfrac{p+q-1}{\mu}\right\rceil, & p>b\mu,\ q>a\mu.
    \end{cases}
    \]
\end{theorem}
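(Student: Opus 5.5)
The plan is to reduce to a rank question about full matchings, which the paper's outline attributes to \cref{thm:matchingrank}, and then read off the formula from \cref{thm:facetnormalcharacterization}.

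\emph{Reduction to matchings.} Suppose $\kappa(\alpha)\le t$, with subsets $B_1,\ldots,B_{d-1}$ in $t$ classes as in \cref{thm:facetnormalcharacterization}. Each zero-sum subset is a disjoint union of atoms, since removing an atom leaves a zero-sum remainder. Refining every $B_j$ into atoms therefore keeps each class a family of pairwise disjoint atoms. Each such family extends to a full matching: pairwise disjoint atoms leave at least $b$ positive and $a$ negative coordinates free until there are $\mu$ of them. The indicator span only grows under these operations, so $\alpha^\perp$ is spanned by the atoms in a union of $t$ full matchings, that is, $\rho_t\ge d-1$.

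Conversely, if $\rho_t\ge d-1$, I choose $d-1$ independent atoms from such a union and color each by its matching; \cref{thm:facetnormalcharacterization} then gives $\kappa(\alpha)\le t$. Hence $\kappa(\alpha)=\min\{t:\rho_t\ge d-1\}$. If $\mu=0$, no atom exists at all, the zero-sum vectors span $\{0\}$, and $\kappa=\infty$ by \cref{cor:anysum}, since $d-1\ge1$.

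\emph{The rank formula.} The core is
\[
\rho_t=\min\{t(\mu-1)+\min\{t,U\},\,d-1\}.
\]
\textbf{Upper bound.} Each full matching contributes $\mu$ atoms. Any two matchings both covering all of $S_+$ (the positive side saturated) have the same sum of indicators, giving a linear relation. More generally, the sum of the indicators of a matching equals $\mathbf{e}_{S_+\cup S_-}$ minus the indicator of its uncovered set $W$, where $|W\cap S_+|=p-b\mu$ and $|W\cap S_-|=q-a\mu$. So the span of the $t$ matching-sum vectors has dimension at most $1$ plus the dimension of the span of the uncovered indicators. That span lies in the indicator space of subsets of fixed sizes; its dimension is at most $p-1$ on $S_+$ when $p>b\mu$ and $0$ when $p=b\mu$, and likewise on $S_-$. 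This gives the cap $U$, and the bound $t(\mu-1)+\min\{t,U\}$ follows. To make it rigorous, I would quotient each matching's atoms by their sum, leaving $\mu-1$ vectors per matching, and bound the span of the sums by $\min\{t,U\}$ separately.

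\textbf{Lower bound.} I need an explicit choice of matchings achieving this. The paper proves it via exterior algebra and symmetric group representations: one shows that the relevant wedge product is nonzero by decomposing the span of atom indicators into Specht-module components ($S^{(p)},S^{(p-1,1)}$ on each side). I would try a direct construction first. Fix a base matching, then take successive matchings that cyclically shift one element between atoms. Each new matching should add $\mu-1$ fresh directions inside the atom-difference space, plus one new uncovered-set direction until $U$ is exhausted. This step, proving the lower bound in general with $t$ arbitrary relative to $\mu$, $p$, $q$, is the main obstacle. If the naive construction fails, I would fall back on a generic-position argument: compare dimensions of the $\mathfrak S_p\times\mathfrak S_q$-isotypic components and show that random matchings attain the maximum.

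\emph{Corollaries.} With $\rho_t$ known, \eqref{eq:exactkappa} is immediate. Finiteness requires $t(\mu-1)+\min\{t,U\}\ge d-1$ for some $t$. When $\mu\ge2$ this holds for large $t$. When $\mu=1$ the left side is $\min\{t,U\}$, so finiteness requires $U=d-1$ or $d-1\le U$. Checking the cases yields exactly the stated conditions: for example, $p=b$ saturates the positive side, forcing $U\le q$ and hence $q\ge d-1$, which means $p=1=b$. When finite, $\kappa=d-1$. The four displayed cases for $\mu\ge2$ follow by solving $t(\mu-1)+\min\{t,U\}\ge d-1$ with $U\in\{1,q,p,d-1\}$: when $t\le U$ the condition reads $t\mu\ge d-1$, and when $t\ge U$ it reads $t(\mu-1)\ge d-1-U$. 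Taking the maximum of the two ceilings, and checking consistency at $t=U$, gives the stated expressions.
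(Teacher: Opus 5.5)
\textbf{Gap: the lower bound on $\rho_t$ is not proved.} Your reduction to full matchings, your upper bound on $\rho_t$, and the closing arithmetic follow the paper's route (\cref{lem:matchingreduction}, the first half of the proof of \cref{thm:matchingrank}, and the case analysis). What is missing is a proof of $\rho_t\geq\min\{d-1,\ t\mu,\ t(\mu-1)+U\}$, and that is the substance of the theorem. Without it you only obtain $\kappa(\alpha)\geq$ the right-hand side of \eqref{eq:exactkappa} and the ``only if'' half of the finiteness criterion. You get neither equality nor finiteness when the four conditions hold.

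The cyclic-shift construction is left unverified, and the difficulty lies exactly there. Each difference $u_\ell-u_\mu=x_\ell+y_\ell$ couples the two sides. The sum-zero spaces $V_+$ and $V_-$ have only $p-1$ and $q-1$ dimensions, so one side can be exhausted long before $d-1$ is reached. For instance, when $b=1$ and $p=\mu$, one matching already fills $V_+$, and later matchings must gain independence entirely through the other side. Moreover, when a side is saturated, the new ``sum'' direction of a matching is confined to the other side. A correct construction has to budget these degrees across matchings. This is what the paper's allocation of available triples $(i_\ell,j_\ell,h_\ell)$ does:
\begin{itemize}
\item the first slot carries $z$ and at most $\mu-1$ further degrees;
\item every later slot carries at most $\mu-1$ degrees, or $\mu$ if one of them is a free degree from an unsaturated side;
\item overall, $\sum_\ell i_\ell\leq p-1$ and $\sum_\ell j_\ell\leq q-1$.
\end{itemize}
Your generic-position fallback is not an argument as stated. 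The full matchings form a finite $G$-orbit of subspaces, so there is no generic choice, and comparing dimensions of isotypic components of $E$ gives no lower bound on the rank of $t$ translates.

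\textbf{The missing idea: work in exterior powers, not in $E$.} In degree one, $G$-invariance only controls the span of all translates $gW$ together, which is the unrestricted question of \cref{cor:anysum}. It says nothing about how many translates suffice. The paper uses that $\Lambda^kE=\bigoplus\Lambda^iV_+\otimes\Lambda^jV_-\otimes\Lambda^hT$ is multiplicity-free, with all hook representations $\Lambda^iV_\pm$ occurring, not only $S^{(p)}$ and $S^{(p-1,1)}$. Hence the $G$-invariant span $\mathcal{L}_k$ of the $\Lambda^k(gW)$ contains every component onto which $\Lambda^kW$ projects nontrivially. The paper verifies these projections using the basis \eqref{eq:Wbasis} (cases (i)--(iii)). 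It then picks $\eta_\ell\in\mathcal{L}_{k_\ell}$ with $\eta_1\wedge\cdots\wedge\eta_t\neq0$ and expands multilinearly to extract $g_1,\ldots,g_t$ with $\dim(g_1W+\cdots+g_tW)\geq k$.

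\textbf{A correction to your upper-bound sketch.} The indicators of the uncovered sets, restricted to an unsaturated $S_+$, can span all $p$ dimensions. What lies in the $(p-1)$-dimensional sum-zero space is their differences from one fixed matching, and that is what yields the cap $U$. Similarly, two full matchings have equal indicator sums only when both sides are saturated, not merely the positive one.
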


We prove \cref{thm:twomagnitude} in three steps: a reduction to full matchings (\cref{lem:matchingreduction}), an exact formula for $\rho_t$ (\cref{thm:matchingrank}), and a case analysis. We then illustrate the formula (\cref{rem:specialcases,ex:generic}), determine the maximum dimension of the exposed face of $\alpha$ (\cref{thm:exposedface}), and extend the result to vectors with zero entries (\cref{cor:twovalues}).

\begin{lemma}[Reduction to full matchings]\label{lem:matchingreduction}
    Let $\mu\geq1$ and $t\geq1$. Then $\kappa(\alpha)\leq t$ if and only if $\rho_t=d-1$.
\end{lemma}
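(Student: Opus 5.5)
The plan is to prove both directions directly from \cref{thm:facetnormalcharacterization}, using two facts about atoms. First, every zero-sum subset splits into disjoint atoms. Second, every family of pairwise disjoint atoms extends to a full matching. Throughout, every atom indicator lies in $\alpha^\perp$, so any rank of such indicators is at most $d-1$.

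\emph{If $\rho_t=d-1$, then $\kappa(\alpha)\leq t$.} Take full matchings $M_1,\ldots,M_t$ whose atoms have indicator vectors of rank $d-1$. Choose $d-1$ linearly independent ones among them, $\mathbf{e}_{B_1},\ldots,\mathbf{e}_{B_{d-1}}$. Put $j$ into class $J_k$ for one $k$ with $B_j\in M_k$. The atoms in one full matching are pairwise disjoint, so each class consists of pairwise disjoint subsets, and there are at most $t$ classes. Each atom has zero $\alpha$-sum. So conditions \ref{thm:fnc:a}--\ref{thm:fnc:c} hold with $n+1=t$, and \cref{thm:facetnormalcharacterization} gives $\kappa(\alpha)\leq t$.

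\emph{If $\kappa(\alpha)\leq t$, then $\rho_t=d-1$.} By \cref{thm:facetnormalcharacterization} (padding with empty classes), there are zero-sum subsets $B_1,\ldots,B_{d-1}$ with linearly independent indicators, split into classes $J_1,\ldots,J_t$ with pairwise disjoint members within each class.
\begin{enumerate}[label=(\roman*)]
\item \emph{Refine into atoms.} A zero-sum $B_j$ has $|B_j\cap S_+|=bk$ and $|B_j\cap S_-|=ak$ for some $k\geq1$. Split $B_j\cap S_+$ into $k$ parts of size $b$ and $B_j\cap S_-$ into $k$ parts of size $a$, and pair them up. This writes $B_j$ as a disjoint union of $k$ atoms, and $\mathbf{e}_{B_j}$ is the sum of their indicators. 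Doing this for every $j\in J_k$ gives a family $\mathcal F_k$ of pairwise disjoint atoms whose span contains every $\mathbf{e}_{B_j}$ with $j\in J_k$.
\item \emph{Extend to full matchings.} Suppose $\mathcal F_k$ has $m<\mu$ atoms. The uncovered coordinates then include at least $p-bm\geq b(\mu-m)\geq b$ elements of $S_+$ and at least $q-am\geq a$ elements of $S_-$. So one more disjoint atom can be added; iterating yields a full matching $M_k\supset\mathcal F_k$. If $J_k$ is empty, take $M_k$ to be any full matching, which exists because $\mu\geq1$.
\item \emph{Compare ranks.} The atoms of $M_1\cup\cdots\cup M_t$ span every $\mathbf{e}_{B_j}$, so their rank is at least $d-1$. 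It is also at most $d-1$, since all these vectors lie in $\alpha^\perp$. Hence $\rho_t=d-1$.
\end{enumerate}

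The only steps needing care are the counting in (ii) that guarantees the extension, and checking that (i) enlarges rather than loses the span. Both are routine, so I expect no real obstacle.
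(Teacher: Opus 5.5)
Your proposal is correct and follows the paper's proof essentially step for step. For $\kappa(\alpha)\leq t\Rightarrow\rho_t=d-1$, you refine each zero-sum subset into atoms within its class and extend each class to a full matching by the same counting ($p\geq b\mu$, $q\geq a\mu$). For the converse, you select $d-1$ independent atom indicators and use the $t$ matchings as the classes in \cref{thm:facetnormalcharacterization}.
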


\begin{proof}
    Suppose $\kappa(\alpha)\leq t$, and take subsets $B_1\,,\ldots,B_{d-1}$ as in statement \ref{thm:fnc:2} of \cref{thm:facetnormalcharacterization}, with classes $J_1\,,\ldots,J_t$ in \ref{thm:fnc:c}, some possibly empty. By \ref{thm:fnc:a} and \ref{thm:fnc:b}, the indicator vectors $\mathbf{e}_{B_j}$ span $\alpha^\perp$. We modify the family; every modification keeps all indicator vectors in $\alpha^\perp$ (atoms have $\alpha$-sum zero) and does not shrink their span, so the span remains $\alpha^\perp$. \emph{Refine:} a subset $B_j$ with $|B_j\cap S_+|=bk$ and $|B_j\cap S_-|=ak$ partitions into $k$ atoms (split $B_j\cap S_+$ into $k$ sets of size $b$ and $B_j\cap S_-$ into $k$ sets of size $a$, and pair them up); replace each $B_j$ by the atoms of such a partition, all placed in the class of $B_j$\,. Within each class the atoms remain pairwise disjoint, because atoms of one $B_j$ are disjoint by construction and atoms of distinct $B_j\,,B_{j'}$ in the same class lie in the disjoint sets $B_j\,,B_{j'}$\,; and the span can only grow, because $\mathbf{e}_{B_j}$ is the sum of the indicator vectors of its atoms. \emph{Extend:} each class is now a set of $m\leq \mu$ pairwise disjoint atoms; if $m<\mu$, then its atoms use at most $(\mu-1)b\leq p-b$ elements of $S_+$ and at most $(\mu-1)a\leq q-a$ elements of $S_-$ (because $b\mu\leq p$ and $a\mu\leq q$), so a further atom disjoint from them can be added; repeat until every class is a full matching. The result is a union of $t$ full matchings whose atom indicators span $\alpha^\perp$, so $\rho_t=d-1$.

    Conversely, if the atom indicators of a union of $t$ full matchings have rank $d-1$, select $d-1$ linearly independent ones and assign each selected atom to one of the $t$ matchings containing it, the matchings serving as the classes. Atoms have $\alpha$-sum zero, which is \ref{thm:fnc:a}; the selected indicators are linearly independent, which is \ref{thm:fnc:b}; and the atoms in one matching are pairwise disjoint, which is \ref{thm:fnc:c} with $n+1=t$. Hence $\kappa(\alpha)\leq t$.
\end{proof}

\begin{theorem}[Exact matching rank]\label{thm:matchingrank}
    Suppose $\mu\geq1$. Then, for every integer $t\geq1$,
    \begin{equation}\label{eq:matchingrank}
    \rho_t=\min\bigl\{\,d-1,\ t\mu,\ t(\mu-1)+U\,\bigr\}.
    \end{equation}
\end{theorem}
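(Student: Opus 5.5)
The formula \eqref{eq:matchingrank} asks for an upper bound on the rank of every union of $t$ full matchings, and a construction attaining it. I would prove the two inequalities separately. The upper bound $\rho_t\le\min\{d-1,t\mu,\dots\}$ is elementary. The lower bound is the real content.

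\emph{Upper bound.} Let $\mathcal M_1,\ldots,\mathcal M_t$ be full matchings, and let $V_i$ be the span of the indicators of the atoms in $\mathcal M_i$. All atom indicators lie in $\alpha^\perp$, which gives $\rho_t\le d-1$. Each $\mathcal M_i$ has $\mu$ atoms, which gives $\rho_t\le t\mu$. For the third term, the key observation is that $\sum_{A\in\mathcal M_i}\mathbf{e}_A=\mathbf{e}_{C_i}$, where $C_i$ is the set covered by $\mathcal M_i$, with $|C_i\cap S_+|=b\mu$ and $|C_i\cap S_-|=a\mu$. Let $W$ be the span of all vectors $\mathbf{e}_C$ with $C$ covered by some full matching. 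I would check that $\dim W\le U$ case by case:
\begin{itemize}
\item If both sides are saturated, then $C=[d]$ always, and $W=\Span\{\mathbf{e}\}$.
\item If only the positive side is saturated, then $W$ lies in $\{x\in\alpha^\perp: x \text{ is constant on } S_+\}$, which has dimension $(1+q)-1=q$.
\item The case where only the negative side is saturated is symmetric, giving dimension $p$.
\item If neither side is saturated, the trivial bound $\dim W\le d-1$ suffices.
\end{itemize}
Now $\mathbf{e}_{C_i}\in V_i\cap W$ is nonzero, so $\dim\bigl(V_i/(V_i\cap W)\bigr)\le\mu-1$. Let $W'=\Span\{\mathbf{e}_{C_1},\ldots,\mathbf{e}_{C_t}\}$, so that $\dim W'\le\min\{t,U\}$. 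Then
\[
\dim(V_1+\cdots+V_t)\le\dim W'+\sum_i\dim\bigl(V_i/(V_i\cap W')\bigr)\le\min\{t,U\}+t(\mu-1).
\]
This gives the bound $t(\mu-1)+U$, and in fact the slightly sharper form $t(\mu-1)+\min\{t,U\}$ that appears in \eqref{eq:exactkappa}. Together with $t\mu$ this is consistent with \eqref{eq:matchingrank}, since $\min\{t\mu,t(\mu-1)+U\}=t(\mu-1)+\min\{t,U\}$.

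\emph{Lower bound.} I would take the matchings to be translates $\mathcal M_i=g_i\mathcal M$ of one fixed full matching $\mathcal M$, with $g_i\in G:=\mathfrak S_{S_+}\times\mathfrak S_{S_-}$ chosen generically. The plan has two parts. First, show that the translates of $\mathbf{e}_{C}$ span a space of dimension exactly $\min\{t,U\}$. For this I would use that $W$ is a $G$-module: it is $\Span\{\mathbf{e}\}$ plus the standard representation of the unsaturated side(s), and generic translates of a single vector fill each summand. Second, show that the quotient parts $V_i/(V_i\cap W)$ are in general position modulo $W$, so that their dimensions $\mu-1$ add until $\alpha^\perp$ is filled.

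I would encode the second part via exterior algebra. The rank of the union is at least $r$ iff some wedge product of $r$ chosen atom indicators is nonzero in $\Lambda^r(\alpha^\perp/W)$. I would then argue by contradiction. If every such wedge vanished for all choices of $g_1,\ldots,g_t$, then averaging over $G$ would produce a proper $G$-invariant subspace annihilating all of them. Decomposing $\alpha^\perp/W$ into irreducible $\mathfrak S_p\times\mathfrak S_q$-modules (Specht modules for two-row shapes), I would show that no such subspace exists, because the atom indicators have nonzero components in every isotypic piece. For small or degenerate parameters, for instance $\mu=1$ or $a=b=1$, I would instead give explicit matchings, such as cyclic shifts of $S_+$ and $S_-$ used to build triangular families as in \cref{thm:padovan}, and check the rank directly.

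\emph{Main obstacle.} The difficult step is this general-position claim. It is not true for arbitrary irreducible actions that generic translates of a subspace span $\min\{tk,\dim\}$ dimensions, so the argument must use the specific structure of atom indicators. What I would try first is to compute the relevant $G$-equivariant map $\Lambda^{\mu-1}(V_{\mathcal M}/W)^{\otimes t}\to\Lambda^r(\alpha^\perp/W)$ on a highest-weight or polytabloid basis, and show that it is nonzero. That would reduce the claim to a nonvanishing computation for a single explicit choice of matchings.
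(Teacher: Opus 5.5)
Your upper bound is correct and is essentially the paper's argument: each matching contributes one sum $\mathbf{e}_{C_i}$, lying in a space of dimension at most $U$, plus at most $\mu-1$ further dimensions. The gap is in the lower bound, and it is structural rather than only the unproved general-position claim. Your $W$, the span of all covered-set indicators $\mathbf{e}_C$, has dimension exactly $U$. So your split into ``translates of $\mathbf{e}_C$ inside $W$'' plus ``$V_i/(V_i\cap W)$ in general position in $\alpha^\perp/W$'' can certify at most $\min\{t,U\}+\min\{t(\mu-1),\,d-1-U\}$. This is strictly smaller than \eqref{eq:matchingrank} whenever $t<U$ and $t(\mu-1)>d-1-U$.

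If neither side is saturated, then $W=\alpha^\perp$: exchanging one coordinate of a covered set on a side with $b\mu<p$ (resp.\ $a\mu<q$) puts every $\mathbf{e}_i-\mathbf{e}_j$ with $i,j$ on that side into $W$. Then $\alpha^\perp/W=0$ and every $V_i\subset W$, so your second step is vacuous. Your first step gives only $\min\{t,d-1\}$, whereas the theorem asserts $\min\{d-1,t\mu\}$; in \cref{ex:generic} that is $4$ instead of $7$. The same loss occurs with one side saturated. For $a=b=1$, $p=2$, $q=5$, $t=2$, the quotient $\alpha^\perp/W$ is one-dimensional, so your accounting stops at $2+1=3$. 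Yet the matchings $\{\{1,3\},\{2,4\}\}$ and $\{\{1,5\},\{2,6\}\}$ already have rank $4=\rho_2$. The reason is that a difference of two atom indicators of one matching has components on both sides. Its component on an unsaturated side lies in $W$, generally outside $\Span\{\mathbf{e}_{C_1},\ldots,\mathbf{e}_{C_t}\}$, and it still contributes to the rank.

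The missing idea is to keep the two sides apart instead of quotienting by $W$. The paper writes $\alpha^\perp=V_+\oplus V_-\oplus T$. Here $V_\pm$ are the coordinate-sum-zero vectors supported on $S_\pm$, and $T$ is the $G$-fixed line spanned by $z=\frac bp\mathbf{e}_{S_+}+\frac aq\mathbf{e}_{S_-}$. It is not $\mathbf{e}$, which lies in $\alpha^\perp$ only when both sides are saturated. Then $\Lambda^k\alpha^\perp$ is a multiplicity-free sum of the irreducibles $\Lambda^iV_+\otimes\Lambda^jV_-\otimes\Lambda^hT$; these are hook representations, not two-row shapes. Hence the span $\mathcal L_k$ of the $k$-th exterior powers of all translates of one matching's span contains every such component onto which that exterior power projects nontrivially.

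Expanding in the basis \eqref{eq:Wbasis} shows this happens for $i+j\le\mu-1$ with $h\in\{0,1\}$. Crucially, it also happens for $i+j=\mu$, $h=0$ when a side is unsaturated. Then the average of the matching's atoms has a component $x_0\notin\Span\{x_1,\ldots,x_{\mu-1}\}$ (or $y_0$ on the negative side), which supplies an extra degree on that side instead of a degree in $T$. This is how every one of the $t$ matchings can contribute $\mu$ dimensions while $T$ is used only once.

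The paper then distributes the $k-1$ degrees outside $T$ over the $t$ slots, subject to $\sum_\ell i_\ell\le p-1$ and $\sum_\ell j_\ell\le q-1$, with the single $z$ in the first slot. Each $\eta_\ell\in\mathcal L_{k_\ell}$ is chosen as a monomial in basis vectors disjoint from those of the other slots, so $\eta_1\wedge\cdots\wedge\eta_t\neq0$. A multilinear expansion of this product then extracts $g_1,\ldots,g_t$. Your averaging step would have to become this per-slot use of the spans $\mathcal L_{k_\ell}$. The nonvanishing you need involves $t$ different translates, so it cannot be phrased through a single $G$-invariant subspace.
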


\begin{proof}
    Write $r:=\mu-1$.

    \smallskip
    \emph{Upper bound.} The atom indicators lie in $\alpha^\perp$, giving rank at most $d-1$; and a union of $t$ full matchings contains at most $t\mu$ distinct atoms, because each full matching consists of exactly $\mu$ atoms (fewer than $t\mu$ occur when two of the matchings share an atom, in particular when a matching is repeated). If $u_1\,,\ldots,u_\mu$ are the atom indicators of one full matching, then they are spanned by the $\mu-1$ differences $u_\ell-u_\mu$ ($1\leq\ell\leq \mu-1$) together with their sum $u_1+\cdots+u_\mu=\mathbf{e}_X+\mathbf{e}_Y$\,, where $X\subset S_+$\,, $|X|=b\mu$, and $Y\subset S_-$\,, $|Y|=a\mu$, are the unions of the positive and the negative parts of the atoms. The span of all vectors $\mathbf{e}_X+\mathbf{e}_Y$ of this form has dimension at most $U$. Indeed, fix one full matching, with sum $\mathbf{e}_{X_0}+\mathbf{e}_{Y_0}$\,. For any full matching, $(\mathbf{e}_X+\mathbf{e}_Y)-(\mathbf{e}_{X_0}+\mathbf{e}_{Y_0})=(\mathbf{e}_X-\mathbf{e}_{X_0})+(\mathbf{e}_Y-\mathbf{e}_{Y_0})$, where $\mathbf{e}_X-\mathbf{e}_{X_0}$ has support in $S_+$ and coordinate sum $|X|-|X_0|=0$, and is zero if the positive side is saturated, because then $X=X_0=S_+$; likewise for $\mathbf{e}_Y-\mathbf{e}_{Y_0}$\,. Hence every such sum lies in the span of $\mathbf{e}_{X_0}+\mathbf{e}_{Y_0}$\,, the coordinate-sum-zero vectors supported on $S_+$ if the positive side is unsaturated, and the coordinate-sum-zero vectors supported on $S_-$ if the negative side is unsaturated, a space of dimension $1+(p-1)\,\mathbf{1}_{\{p>b\mu\}}+(q-1)\,\mathbf{1}_{\{q>a\mu\}}=U$. In a union of $t$ full matchings, the $t$ sums $\mathbf{e}_X+\mathbf{e}_Y$ therefore span at most $\min\{t,U\}$ dimensions, and the $t(\mu-1)$ differences span at most $t(\mu-1)$. Hence the rank is at most $t(\mu-1)+\min\{t,U\}=\min\{t\mu,\ t(\mu-1)+U\}$, and together with the first bound, $\rho_t\leq\min\{d-1,\ t\mu,\ t(\mu-1)+U\}$.

    \smallskip
    \emph{One matching and its symmetries.} For the lower bound it suffices to find $t$ coordinate permutations $g_1\,,\ldots,g_t$ preserving the two sides such that $\dim(g_1W+\cdots+g_tW)\geq\min\{d-1,\ t\mu,\ t(\mu-1)+U\}$, where $W$ is the span of the atom indicators of one fixed full matching. We find them by exhibiting a nonzero product $\eta_1\wedge\cdots\wedge\eta_t$ in which each $\eta_\ell$ lies in the span of the exterior powers of the translates $gW$; representation theory identifies these spans. We work over $\mathbb{C}$, which does not change the rank of a real matrix. Put $E:=\{v\in\mathbb{C}^d:\alpha^\top v=0\}$. Let $V_+$ and $V_-$ be the coordinate-sum-zero subspaces supported on $S_+$ and on $S_-$\,, of dimensions $p-1$ and $q-1$, and put
    \[
    z:=\frac{b}{p}\,\mathbf{e}_{S_+}+\frac{a}{q}\,\mathbf{e}_{S_-}\,,\qquad T:=\mathbb{C}z.
    \]
    Then $\alpha^\top z=ab-ba=0$. The spaces $V_+$ and $V_-$ have disjoint supports, and every vector in their sum has coordinate sum zero on each side, whereas $z$ has coordinate sums $b$ and $a$ on the two sides; so the sum of $V_+$, $V_-$, and $T$ is direct, and comparing dimensions gives $E=V_+\oplus V_-\oplus T$.
    The group $G:=\mathfrak{S}_{S_+}\times\mathfrak{S}_{S_-}$ of coordinate permutations preserving the two sides acts on $E$ and fixes $z$.

    Fix a full matching with atom indicators $u_1\,,\ldots,u_\mu$\,, and write $u_\ell=\mathbf{e}_{A_\ell}+\mathbf{e}_{C_\ell}$ with $A_\ell\subset S_+$\,, $|A_\ell|=b$, and $C_\ell\subset S_-$\,, $|C_\ell|=a$, where the sets $A_\ell$ are pairwise disjoint, as are the sets $C_\ell$\,. Put $X:=\bigcup_{\ell=1}^\mu A_\ell$\,, $Y:=\bigcup_{\ell=1}^\mu C_\ell$\,, and $W:=\Span\{u_1\,,\ldots,u_\mu\}$. The atom indicators have pairwise disjoint nonempty supports, so $\dim W=\mu$. Moreover, every full matching is a $G$-translate of this one: after pairing its atoms with $u_1\,,\ldots,u_\mu$\,, choose permutations of $S_+$ and of $S_-$ carrying the corresponding positive parts, negative parts, and unused coordinates onto one another.

    For $1\leq\ell\leq r$, define
    \[
    x_\ell:=\mathbf{e}_{A_\ell}-\mathbf{e}_{A_\mu}\in V_+\,,\qquad y_\ell:=\mathbf{e}_{C_\ell}-\mathbf{e}_{C_\mu}\in V_-\,,
    \]
    and define
    \[
    x_0:=\frac1\mu\,\mathbf{e}_X-\frac bp\,\mathbf{e}_{S_+}\in V_+\,,\qquad y_0:=\frac1\mu\,\mathbf{e}_Y-\frac aq\,\mathbf{e}_{S_-}\in V_-\,.
    \]
    Then $u_\ell-u_\mu=x_\ell+y_\ell$ for $1\leq\ell\leq r$, and $\frac1\mu\sum_{\ell=1}^\mu u_\ell=\frac1\mu(\mathbf{e}_X+\mathbf{e}_Y)=z+x_0+y_0$\,. The differences $u_\ell-u_\mu$ together with the average of the $u_\ell$ span $W$, hence
    \begin{equation}\label{eq:Wbasis}
    W=\Span\{\,x_1+y_1\,,\ \ldots,\ x_r+y_r\,,\ z+x_0+y_0\,\}.
    \end{equation}
    The vectors $x_1\,,\ldots,x_r$ are linearly independent: in a vanishing linear combination, a coordinate in $A_\ell$\,, $1\leq\ell\leq r$, occurs only in $x_\ell$ and forces its coefficient to vanish. The same argument shows that $y_1\,,\ldots,y_r$ are linearly independent. If the positive side is saturated, then $X=S_+$ and $p=b\mu$, so $x_0=0$. Otherwise $S_+\setminus X\neq\varnothing$, and $x_0$ takes the nonzero value $-b/p$ there, whereas every $x_\ell$\,, $1\leq\ell\leq r$, vanishes there; thus $x_0\notin\Span\{x_1\,,\ldots,x_r\}$ when the positive side is unsaturated. Likewise, $y_0=0$ when the negative side is saturated, and $y_0\notin\Span\{y_1\,,\ldots,y_r\}$ when it is unsaturated.

    \smallskip
    \emph{Exterior components.} The exterior powers $\Lambda^iV_+$\,, $0\leq i\leq p-1$, of the standard representation of $\mathfrak{S}_{S_+}$ are irreducible and pairwise inequivalent (they are the hook representations; see \cite[Exercise~4.6]{FultonHarris}), and likewise for $V_-$\,. As $V_+$\,, $V_-$\,, and $T$ are $G$-invariant and $\dim T=1$, for each $k$
    \begin{equation}\label{eq:lambdadecomp}
    \Lambda^kE=\bigoplus_{\substack{i+j+h=k\\ 0\leq i\leq p-1,\ 0\leq j\leq q-1\\ h\in\{0,1\}}}\Lambda^iV_+\otimes\Lambda^jV_-\otimes\Lambda^hT
    \end{equation}
    is a decomposition into irreducible $G$-representations, namely external tensor products of irreducible representations of $\mathfrak{S}_{S_+}$ and $\mathfrak{S}_{S_-}$\,, tensored with the trivial representation $\Lambda^hT$; they are pairwise inequivalent, because the isomorphism type determines $(i,j)$ and hence $h=k-i-j$. So the $G$-invariant subspace $\mathcal{L}_k:=\Span_{g\in G}\Lambda^k(gW)$ of $\Lambda^kE$ is a sum of some of these components, and it contains every component onto which $\Lambda^kW\subset\mathcal{L}_k$ has a nonzero projection.

    We next identify components with nonzero projection, using the spanning set \eqref{eq:Wbasis} of $W$. Extend $x_1\,,\ldots,x_r$ to a basis of $V_+$\,, including $x_0$ when the positive side is unsaturated, and extend $y_1\,,\ldots,y_r$ to a basis of $V_-$\,, including $y_0$ when the negative side is unsaturated. The triples $(i,j,h)$ in (i)--(iii) lie in the index range of \eqref{eq:lambdadecomp}, because $\mu\leq\min\{p,q\}$, with $\mu\leq p-1$ (resp.\ $\mu\leq q-1$) when the positive (resp.\ negative) side is unsaturated.
    \begin{enumerate}[label=(\roman*)]
        \item Suppose $h=0$ and $i+j\leq r$. Choose $K\subset[r]$ with $|K|=i+j$, and consider $\bigwedge_{\ell\in K}(x_\ell+y_\ell)\in\Lambda^{i+j}W$. Since every $x_\ell+y_\ell$ lies in $V_+\oplus V_-$\,, expanding the product shows that its projection onto $\Lambda^iV_+\otimes\Lambda^jV_-$ equals
        \[
        \sum_{I\subset K,\ |I|=i}\pm\Bigl(\bigwedge_{\ell\in I}x_\ell\Bigr)\otimes\Bigl(\bigwedge_{\ell\in K\setminus I}y_\ell\Bigr).
        \]
        The summands are distinct basis monomials, each with coefficient $1$ or $-1$, so the projection is nonzero.
        \item Suppose $h=1$ and $i+j\leq r$. Wedge the exterior product in (i) with $z+x_0+y_0$\,; the result lies in $\Lambda^{i+j+1}W$ because $z+x_0+y_0\in W$ by \eqref{eq:Wbasis}. Since $x_0\in V_+$ and $y_0\in V_-$\,, only the term with $z$ contributes to components with $h=1$, so the projection onto $\Lambda^iV_+\otimes\Lambda^jV_-\otimes T$ is the projection from (i) wedged with $z$. It is nonzero, because $z\notin V_+\oplus V_-$\,, so wedging with $z$ is injective on $\Lambda(V_+\oplus V_-)$.
        \item Suppose $h=0$ and $i+j=\mu=r+1$. If the positive side is unsaturated and $i\geq1$, expand
        \[
        \Bigl(\bigwedge_{\ell=1}^r(x_\ell+y_\ell)\Bigr)\wedge(z+x_0+y_0)\in\Lambda^\mu W.
        \]
        Choose $I\subset[r]$ with $|I|=i-1$ and put $J:=[r]\setminus I$, so that $|J|=j$. In the expansion of the product, the basis monomial $\bigl(x_0\wedge\bigwedge_{\ell\in I}x_\ell\bigr)\otimes\bigl(\bigwedge_{\ell\in J}y_\ell\bigr)$ of $\Lambda^iV_+\otimes\Lambda^jV_-$ arises from exactly one term, so its coefficient in the projection onto $\Lambda^iV_+\otimes\Lambda^jV_-$ is $1$ or $-1$. No other term produces it: another choice of $I$ gives a different monomial, terms using $y_0$ contain no $x_0$\,, and terms using $z$ have $h=1$. The projection is therefore nonzero. If instead the negative side is unsaturated and $j\geq1$, the same argument uses $y_0$\,.
    \end{enumerate}

    \smallskip
    \emph{Allocation of exterior degrees.} Call a triple $(i,j,h)$ \emph{available} if $h=0$ and $i+j\leq r$, as in (i); or $h=1$ and $i+j\leq r$, as in (ii); or $h=0$, $i+j=\mu$, and either $i\geq1$ with the positive side unsaturated or $j\geq1$ with the negative side unsaturated, as in (iii). By (i)--(iii), $\Lambda^{i+j+h}W$ has a nonzero projection onto the component $\Lambda^iV_+\otimes\Lambda^jV_-\otimes\Lambda^hT$ of every available triple. Set $k:=\min\{d-1,\ t\mu,\ tr+U\}$, the target rank. We construct available triples $(i_\ell\,,j_\ell\,,h_\ell)$, $1\leq\ell\leq t$, such that
    \begin{gather*}
    h_1=1,\qquad h_\ell=0\quad(\ell\geq2),\qquad \sum_{\ell=1}^t(i_\ell+j_\ell)=k-1,\\
    \sum_{\ell=1}^ti_\ell\leq p-1,\qquad \sum_{\ell=1}^tj_\ell\leq q-1.
    \end{gather*}
    Call a degree on the positive (resp.\ negative) side \emph{free} if that side is unsaturated, and put $e:=\max\{0,\ k-1-tr\}$; since $k\leq t\mu$, we have $e\leq t-1$. Choose totals $I\leq p-1$ and $J\leq q-1$ with $I+J=k-1$, which is possible because $k-1\leq d-2$, so that the number $c:=I\,\mathbf{1}_{\{p>b\mu\}}+J\,\mathbf{1}_{\{q>a\mu\}}$ of free degrees is as large as possible. Then $c\geq e$. Indeed, if both sides are saturated, then $U=1$, so $k\leq tr+1$ and $e=0$; if both are unsaturated, then $c=k-1\geq e$; and if only the negative side is unsaturated, then $c=\min\{q-1,\ k-1\}$, and $q-1\geq e$ because $k\leq tr+U=tr+q$; the case in which only the positive side is unsaturated is symmetric.

    Now distribute these $k-1$ degrees over $t$ slots: each of the slots $2,\ldots,e+1$ first receives one free degree, and the remaining $k-1-e$ degrees are then placed arbitrarily so that every slot receives at most $r$ additional degrees, which is possible because $tr\geq k-1-e$. Let $i_\ell$ and $j_\ell$ be the numbers of positive and negative degrees in slot $\ell$. Then $i_1+j_1\leq r$, so $(i_1\,,j_1\,,1)$ is available by (ii). For $\ell\geq2$, either $i_\ell+j_\ell\leq r$, and $(i_\ell\,,j_\ell\,,0)$ is available by (i), or $i_\ell+j_\ell=\mu$ and slot $\ell$ contains a free degree, and $(i_\ell\,,j_\ell\,,0)$ is available by (iii). Finally, $\sum_\ell i_\ell=I\leq p-1$ and $\sum_\ell j_\ell=J\leq q-1$. This argument includes the case $\mu=1$, in which $r=0$.

    \smallskip
    \emph{From orbit spans to matchings.} Put $k_\ell:=i_\ell+j_\ell+h_\ell$\,. Fix arbitrary bases of $V_+$ and of $V_-$\,. For each $\ell$, choose $i_\ell$ positive-side basis vectors and $j_\ell$ negative-side basis vectors, using disjoint subsets of each basis for different $\ell$; this is possible by the two overall degree bounds. Let $\eta_\ell$ be their exterior product, also including $z$ when $\ell=1$. Then $\eta_\ell$ is a nonzero element of the component indexed by $(i_\ell\,,j_\ell\,,h_\ell)$. Since this triple is available, that component is contained in $\mathcal{L}_{k_\ell}$\,, so $\eta_\ell\in\mathcal{L}_{k_\ell}$\,. Moreover, $\eta_1\wedge\cdots\wedge\eta_t\neq0$, because all chosen basis vectors are distinct and $z$ occurs exactly once, so the product is, up to sign, a basis monomial of $\Lambda^kE$.

    By the definition of $\mathcal{L}_{k_\ell}$\,, each $\eta_\ell$ is a finite linear combination of exterior products of $k_\ell$ vectors from spaces $gW$, $g\in G$. Expanding $\eta_1\wedge\cdots\wedge\eta_t$ multilinearly, at least one term is nonzero. That term supplies $g_1\,,\ldots,g_t\in G$ and $k_\ell$ vectors from each $g_\ell W$ whose combined exterior product is nonzero. Hence these $k=\sum_\ell k_\ell$ vectors are linearly independent, and $\dim(g_1W+\cdots+g_tW)\geq k$. When $k_\ell=0$, the corresponding exterior product is $1$, and $g_\ell$ may be chosen arbitrarily. Each $g_\ell W$ is the span of the atom indicators of a full matching, so $\rho_t\geq k$. Together with the upper bound, this proves \eqref{eq:matchingrank}.
\end{proof}

\begin{proof}[Proof of \cref{thm:twomagnitude}]
    Because $\gcd(a,b)=1$, every nonempty subset $B$ with $\alpha^\top\mathbf{e}_B=0$ contains $bk$ elements of $S_+$ and $ak$ elements of $S_-$ for some $k\geq1$; so if such a subset exists, then $b\leq p$ and $a\leq q$, that is, $\mu\geq1$. Hence, if $\mu=0$, there are no such subsets, the vectors in \cref{cor:anysum} span $\{0\}\neq\alpha^\perp$, and $\kappa(\alpha)=\infty$ by \cref{cor:anysum}. Let $\mu\geq1$. By \cref{lem:matchingreduction,thm:matchingrank}, $\kappa(\alpha)\leq t$ exactly when $\min\{d-1,\,t\mu,\,t(\mu-1)+U\}=d-1$, that is, when $t(\mu-1)+\min\{t,U\}=\min\{t\mu,\,t(\mu-1)+U\}\geq d-1$; this proves \eqref{eq:exactkappa}.

    Let $\mu=1$. Then \eqref{eq:exactkappa} reads $\kappa(\alpha)=\min\{t\geq1:\ \min\{t,U\}\geq d-1\}$, and since $U\leq1+(p-1)+(q-1)=d-1$, we get $\kappa(\alpha)=d-1$ if $U=d-1$ and $\kappa(\alpha)=\infty$ otherwise. Here $U=1+(p-1)\,\mathbf{1}_{\{p>b\}}+(q-1)\,\mathbf{1}_{\{q>a\}}$\,, so $U=d-1$ if and only if ($p>b$ or $p=1$) and ($q>a$ or $q=1$); as $p\geq b\geq1$ and $q\geq a\geq1$, this is ($p>b$ or $b=1$) and ($q>a$ or $a=1$).

    Let $\mu\geq2$. Then $p\geq2b>b$ and $q\geq2a>a$, so the conditions of the finiteness criterion hold. Because $t(\mu-1)+\min\{t,U\}=\min\{t\mu,\ t(\mu-1)+U\}$, the condition in \eqref{eq:exactkappa} means $t\geq(d-1)/\mu$ and $t\geq(d-1-U)/(\mu-1)$, so
    \begin{equation}\label{eq:kappatwoterms}
    \kappa(\alpha)=\max\Bigl\{\Bigl\lceil\frac{d-1}{\mu}\Bigr\rceil,\ \Bigl\lceil\frac{d-1-U}{\mu-1}\Bigr\rceil\Bigr\}<\infty;
    \end{equation}
    the requirement $t\geq1$ is automatic because $d\geq2$. Substituting $U=1$, $q$, $p$, and $d-1$ in the four cases gives the four lines of the displayed formula. In the first case, $d-1-U=p+q-2$, and the second term dominates, because $(d-2)/(\mu-1)\geq(d-1)/\mu$ is equivalent to $d\geq \mu+1$, which holds as $d=(a+b)\mu\geq2\mu$; in the last case the second term is $0$.

    Finally, the first two conditions of the finiteness criterion, $p\geq b$ and $q\geq a$, say exactly that $\mu\geq1$, and the three preceding paragraphs show that, given $\mu\geq1$, $\kappa(\alpha)<\infty$ if and only if the last two conditions hold.
\end{proof}

\begin{remark}\label{rem:specialcases}
    We note two consequences of \cref{thm:twomagnitude}.
    \begin{enumerate}[label=(\roman*)]
        \item \emph{Entries $\pm1$; proof of \cref{thm:kappa}\ref{thm:kappa:a}.} Let $\alpha\in\{-1,0,1\}^d$ with $p_+\,,p_-\geq1$. By \cref{lem:zeropadding}, applied with $S$ the support of $\alpha$, and by permuting coordinates, $\kappa(\alpha)$ equals $\kappa$ of the vector with $p:=p_+$ entries $1$ and $q:=p_-$ entries $-1$, to which \cref{thm:twomagnitude} applies with $a=b=1$. An atom is then a pair $\{i,j\}$ with $\alpha_i=1$ and $\alpha_j=-1$, and $\mu=\min\{p,q\}=p_{\min}$\,, so we must show $\kappa(\alpha)=\lceil(p+q-1)/\mu\rceil$. By symmetry let $p\leq q$; then $p=\mu$, so the positive side is saturated, and the negative side is saturated exactly when $q=p$; in particular, the case of \cref{thm:twomagnitude} in which neither side is saturated does not occur. \Cref{thm:twomagnitude} therefore gives
        \[
        \kappa(\alpha)=
        \begin{cases}
        p+q-1, & \mu=1,\\
        \lceil(2\mu-2)/(\mu-1)\rceil=2, & \mu\geq2,\ q=p,\\
        \max\bigl\{\lceil(p+q-1)/\mu\rceil,\ \lceil(p-1)/(\mu-1)\rceil\bigr\}, & \mu\geq2,\ q>p,
        \end{cases}
        \]
        and in each case this equals $\lceil(p+q-1)/\mu\rceil$: in the second case $\lceil(2\mu-1)/\mu\rceil=2$, and in the third $\lceil(p-1)/(\mu-1)\rceil=1$ because $p=\mu$. This proves \cref{thm:kappa}\ref{thm:kappa:a}. For example, $\kappa(1,-1)=1$ and $\kappa(1,1,-1,-1,-1)=2$.
        \item \emph{A counting bound.} Let $\mu\geq1$. A staircase simplex has at most $\mu$ maximizing blocks with respect to $\alpha$, because these blocks are pairwise disjoint sets with zero $\alpha$-sum and each contains an atom; hence $\kappa(\alpha)\geq\lceil(d-1)/\mu\rceil$ by \cref{lem:dimofFMP}. If neither side is saturated, that is, $p>b\mu$ and $q>a\mu$, then \cref{thm:twomagnitude} shows that equality holds; see \cref{ex:generic} below.
    \end{enumerate}
\end{remark}

The proof of \cref{thm:matchingrank} is not constructive: it produces the matchings through a nonzero exterior form. We record one explicit optimal certificate.

\begin{example}\label{ex:generic}
    Let $\alpha:=(2,2,2,-1,-1,-1,-1,-1)\in\mathbb{Z}^8$, so that $a=2$, $b=1$, $p=3$, $q=5$, $S_+=\{1,2,3\}$, $S_-=\{4,\ldots,8\}$, and $\mu=\min\{3,2\}=2$. Neither side is saturated, so \cref{thm:twomagnitude} gives $\kappa(\alpha)=\lceil(d-1)/\mu\rceil=4$. An atom consists of one coordinate in $S_+$ and two in $S_-$\,. Consider the four full matchings
    \begin{gather*}
    \mathcal{M}_1=\bigl\{\{1,4,5\},\{2,6,7\}\bigr\},\qquad
    \mathcal{M}_2=\bigl\{\{2,4,6\},\{3,5,8\}\bigr\},\\
    \mathcal{M}_3=\bigl\{\{3,4,7\},\{1,5,6\}\bigr\},\qquad
    \mathcal{M}_4=\bigl\{\{1,7,8\},\{2,5,6\}\bigr\}.
    \end{gather*}
    If $w\in\mathbb{R}^8$ is orthogonal to the indicator vectors of all eight atoms, then comparing atoms that share a coordinate ($\{1,4,5\}$ with $\{1,5,6\}$; $\{2,4,6\}$ with $\{2,5,6\}$ and $\{2,6,7\}$; $\{3,4,7\}$ with $\{3,5,8\}$) gives in turn $w_4=w_5=w_6=w_7=w_8$\,, and then $w_1=w_2=w_3=-2w_4$\,. So $w$ is a multiple of $\alpha$, the eight indicator vectors span $\alpha^\perp$, and $\rho_4=d-1$; since the atom $\{1,7,8\}$ was not used, the other seven indicator vectors already span $\alpha^\perp$.

    These seven atoms satisfy conditions \ref{thm:fnc:a}--\ref{thm:fnc:c} of \cref{thm:facetnormalcharacterization}, with the atoms of $\mathcal{M}_k$ other than $\{1,7,8\}$ as the $k$-th class, so $\alpha$ is a facet normal of a sum of four staircase simplices and $\kappa(\alpha)\leq4$; conversely, $\kappa(\alpha)\geq\lceil7/2\rceil=4$ by \cref{rem:specialcases}(ii). Thus the four matchings form an explicit optimal certificate.
\end{example}

The matching-rank formula governs more than facets: it gives the exact maximum dimension of the exposed face of $\alpha$ over all sums of $t$ staircase simplices.

\begin{corollary}[Maximum exposed-face dimension]\label{thm:exposedface}
    Let $\alpha$, $\mu$, and $U$ be as above, and let $t\geq1$. Over all permutations $\sigma^1,\ldots,\sigma^t$ of $[d]$,
    \[
    \max_{\sigma^1,\ldots,\sigma^t}\ \dim F_{\Delta_{\sigma^1}+\cdots+\Delta_{\sigma^t}}(\alpha)
    =\begin{cases}
    0, & \mu=0,\\
    \min\bigl\{\,d-1,\ t\mu,\ t(\mu-1)+U\,\bigr\}=\rho_t\,, & \mu\geq1.
    \end{cases}
    \]
\end{corollary}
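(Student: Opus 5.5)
We prove the upper bound on the exposed-face dimension and its attainment separately, using \cref{lem:dimofFMP}(c) to express $\dim F_Q(\alpha)$ as the rank of the indicator vectors of the maximizing blocks of $\sigma^1,\ldots,\sigma^t$.

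\emph{The case $\mu=0$.} Here no nonempty subset has zero $\alpha$-sum, because such a subset would contain $bk\geq b$ elements of $S_+$ and $ak\geq a$ elements of $S_-$. Every maximizing block has zero $\alpha$-sum by \cref{lem:dimofFMP}(b), so no permutation has any maximizing block. Then each $|R_k(\alpha)|=1$, and the rank, hence the dimension, is $0$.

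\emph{Upper bound for $\mu\geq1$.} Fix $\sigma^1,\ldots,\sigma^t$. The maximizing blocks of each $\sigma^k$ are pairwise disjoint subsets with zero $\alpha$-sum, so they form one class of pairwise disjoint zero-sum sets. We apply the refine-and-extend step from the proof of \cref{lem:matchingreduction} to each class. Refining replaces each block by a disjoint union of atoms; this stays within the class and can only enlarge the span. Extending adds atoms until the class is a full matching, and the extension argument there uses only $b\mu\leq p$ and $a\mu\leq q$. The resulting union of $t$ full matchings has atom indicators whose span contains every block indicator. Hence $\dim F_Q(\alpha)\leq\rho_t$. Here the refine-and-extend step is applied to an arbitrary family of classes rather than to one that spans $\alpha^\perp$; its monotonicity does not depend on spanning.

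\emph{Attainment.} By \cref{thm:matchingrank} there are full matchings $\mathcal M_1,\ldots,\mathcal M_t$ whose atom indicators have rank $\rho_t$. For each $k$, we build $\sigma^k$ as in the proof of \cref{thm:facetnormalcharacterization}(2)$\Rightarrow$(1), with $J_k$ the atoms of $\mathcal M_k$. That construction and its ``upper bound'' and ``attaining the bound'' steps use only the zero-sum property of the sets in $J_k$ and their pairwise disjointness, not conditions (b) or the count $d-1$. So they show $\mathbf{e}_A\in\dir F_{\Delta_{\sigma^k}}(\alpha)$ for every atom $A\in\mathcal M_k$. By \eqref{eq:minkowskiadd} and the additivity of direction spaces, $\dir F_Q(\alpha)$ contains all these indicators, so $\dim F_Q(\alpha)\geq\rho_t$. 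Combined with the upper bound and the formula \eqref{eq:matchingrank} for $\rho_t$, this gives the stated maximum.

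The main obstacle is the bookkeeping in reusing the two earlier proofs outside their stated hypotheses: the construction in \cref{thm:facetnormalcharacterization} and the reduction in \cref{lem:matchingreduction} are both phrased for spanning families of $d-1$ sets. We therefore need to check that each step of those arguments goes through verbatim for arbitrary disjoint zero-sum families. The representation-theoretic content is entirely contained in \cref{thm:matchingrank}.
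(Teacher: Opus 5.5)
Your proposal is correct and follows essentially the same route as the paper: \cref{lem:dimofFMP} reduces the dimension to the rank of the block indicators, refine-and-extend from \cref{lem:matchingreduction} bounds it by $\rho_t$, and the construction in the proof of \ref{thm:fnc:2}~$\Rightarrow$~\ref{thm:fnc:1} of \cref{thm:facetnormalcharacterization} attains $\rho_t$, with \cref{thm:matchingrank} supplying the value. One small point: matchings attaining $\rho_t$ exist by the definition of $\rho_t$ as a maximum over finitely many unions; \cref{thm:matchingrank} is needed only for the closed-form value.
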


\begin{proof}
    By \cref{lem:dimofFMP}, the dimension equals the rank of the indicator vectors of the maximizing blocks of $\sigma^1,\ldots,\sigma^t$\,. If $\mu=0$, there are no nonempty zero-sum subsets (proof of \cref{thm:twomagnitude}), so the face is a point. If $\mu\geq1$, refining the blocks of each $\sigma^k$ into atoms and extending them to a full matching, as in the proof of \cref{lem:matchingreduction}, shows that the dimension is at most $\rho_t$\,. Conversely, for $t$ full matchings attaining $\rho_t$\,, the construction in the proof of \ref{thm:fnc:2} $\Rightarrow$ \ref{thm:fnc:1} of \cref{thm:facetnormalcharacterization}, which uses only \ref{thm:fnc:a} and the disjointness within each class, gives permutations $\sigma^1,\ldots,\sigma^t$ with the indicator vector of every atom of the $k$-th matching in $\dir F_{\Delta_{\sigma^k}}(\alpha)$; so the dimension $\rho_t$ is attained, and \cref{thm:matchingrank} gives its value. In particular, for $\mu\geq1$ the least codimension of the exposed face is $\max\{1,\ d-t\mu,\ d-t(\mu-1)-U\}$.
\end{proof}

Combined with the zero-padding lemma, \cref{thm:twomagnitude} determines $\kappa$ for every vector whose nonzero entries take at most two distinct values.

\begin{corollary}[At most two distinct nonzero values]\label{cor:twovalues}
    Let $d\geq2$, and let $\alpha\in\mathbb{R}^d\setminus\{0\}$ have at most two distinct nonzero entry values.
    \begin{enumerate}[label=(\alph*),ref=(\alph*)]
        \item\label{cor:twovalues:a} If the two nonzero values have opposite signs and rational ratio, write them as $ca$ and $-cb$ with $c>0$ and $a,b$ coprime positive integers, and let $p$ and $q$ be their multiplicities. Then $\kappa(\alpha)$ equals the value given by \cref{thm:twomagnitude} for these $a$, $b$, $p$, and $q$; in particular, it may be $\infty$.
        \item\label{cor:twovalues:b} If $\alpha$ has exactly one nonzero entry, then $\kappa(\alpha)=1$.
        \item\label{cor:twovalues:c} If $\alpha$ has at least two nonzero entries, all of the same sign, or if its two nonzero values have opposite signs and irrational ratio, then $\kappa(\alpha)=\infty$.
    \end{enumerate}
    These three cases are exhaustive. In particular, for $\alpha\in\{-1,0,1\}^d$ the corollary recovers \cref{thm:kappa}; see \cref{rem:specialcases}(i).
\end{corollary}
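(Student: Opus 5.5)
The plan is to sort $\alpha$ into the three cases according to its support and sign pattern. In each case we either reduce to \cref{thm:twomagnitude} through \cref{lem:zeropadding} and a coordinate permutation and a rescaling, or we show directly that the spanning criterion of \cref{cor:anysum} fails. First, exhaustiveness. Let $S$ be the support of $\alpha$. Suppose $|S|\ge2$ and $\alpha$ has entries of both signs. Then $\alpha$ has at most two distinct nonzero values, so it has exactly two, one positive and one negative. Their ratio is either rational, which is case~(a), or irrational, which is part of case~(c). The remaining possibilities are $|S|=1$, which is case~(b), and all nonzero entries of one sign, which is case~(c).

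For case~(a) we first note that $\kappa$ is invariant under multiplication by $c^{-1}>0$. This is the first consequence recorded after \cref{thm:facetnormalcharacterization}: conditions (a)–(c) are scale invariant. It is also invariant under permuting coordinates, since a permutation maps staircase simplices to staircase simplices. So we may assume the nonzero entries are $a$ (with multiplicity $p$) and $-b$ (with multiplicity $q$). Here $|S|=p+q\ge2$, so \cref{lem:zeropadding} gives $\kappa(\alpha)=\kappa(\alpha_S)$, and $\alpha_S$ is exactly a vector of the kind treated in \cref{sec:twomagnitude}, with $d$ replaced by $p+q$. Then \cref{thm:twomagnitude} gives the value, which may be $\infty$ (for instance when $\mu=0$). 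Case~(b) is the argument of \cref{thm:kappa}(b): after scaling, $\alpha=\pm\mathbf{e}_i$, which is a facet normal of $\Delta_\sigma$ with $\sigma_d=i$ or $\sigma_1=i$ respectively.

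For case~(c) we use \cref{cor:anysum}: it suffices to show that the zero-sum indicator vectors lie in a subspace of $\alpha^\perp$ of dimension less than $d-1$. If all nonzero entries have one sign and $|S|\ge2$, the argument of \cref{thm:kappa}(c) applies verbatim: any zero-sum $B$ lies in $Z=[d]\setminus S$, and $\dim\Span\{\mathbf{e}_i:i\in Z\}\le d-2$. If the two values $u>0>-v$ have irrational ratio, then a zero-sum $B$ satisfies $u|B\cap S_+|=v|B\cap S_-|$. Irrationality forces both counts to be zero, so again $B\subset Z$ and the same dimension bound gives $\kappa(\alpha)=\infty$.

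Finally, for $\alpha\in\{-1,0,1\}^d$ the corollary specializes to $a=b=c=1$, and \cref{rem:specialcases}(i) shows that \cref{thm:twomagnitude} then reproduces \cref{thm:kappa}(a); parts (b) and (c) coincide. No step is a real obstacle. The only care needed is in case~(a): we must check that scaling by $c^{-1}$ and permuting coordinates preserve $\kappa$, and that \cref{lem:zeropadding} applies because $|S|\ge2$.
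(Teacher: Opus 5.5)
Your proposal is correct and follows the paper's proof: scaling, \cref{lem:zeropadding}, and a coordinate permutation reduce (a) to \cref{thm:twomagnitude}; (b) is \cref{thm:kappa}\ref{thm:kappa:b}; and the same-sign part of (c) uses that every zero-sum set avoids the support. The one difference is in the irrational-ratio subcase of (c), where the paper invokes \cref{lem:crossproduct} (any $\alpha$ admitting subsets with \ref{thm:fnc:a} and \ref{thm:fnc:b} is a multiple of an integer vector), whereas you observe directly that $u|B\cap S_+|=v|B\cap S_-|$ forces $B\subset Z$; this is more elementary and handles both subcases of (c) with the same dimension count.
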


\begin{proof}
    For \ref{cor:twovalues:a}, scale $\alpha$ by $1/c$, which does not change $\kappa$ (comment after \cref{thm:facetnormalcharacterization}); by \cref{lem:zeropadding}, applied with $S$ the support of $\alpha$, and a permutation of coordinates, \cref{thm:twomagnitude} applies. Part \ref{cor:twovalues:b} is \cref{thm:kappa}\ref{thm:kappa:b} after scaling. For \ref{cor:twovalues:c}, if some subsets satisfy \ref{thm:fnc:a} and \ref{thm:fnc:b} of \cref{thm:facetnormalcharacterization} for $\alpha$, then $\alpha$ is a multiple of an integer vector by \cref{lem:crossproduct}, so its nonzero entries have rational ratios; and if the nonzero entries of $\alpha$ all have the same sign, then every zero-sum subset avoids the support of $\alpha$, so the zero-sum indicator vectors span a space of dimension at most $d-2$. In both cases \ref{thm:fnc:2} fails for every $n$. The three cases are exhaustive.
\end{proof}

\section{Order polytopes}\label{sec:orderpolytopes}
We return to the setting of the first paper. Let $P$ be a poset on $[d]$ with strict order relation $\prec_P$\,, and let $\mathcal L(P)=\{\sigma^0,\ldots,\sigma^n\}$ be its set of linear extensions. The staircase simplices $\Delta_\sigma$\,, $\sigma\in\mathcal L(P)$, form Stanley's triangulation of the order polytope $\OP=\{x\in[0,1]^d: x_i\leq x_j \text{ for } i\prec_P j\}$ (see \cite{Stanley1986}), and $M(P):=\sum_{k\in N_0}\Delta_{\sigma^k}$ is the Minkowski sum studied in \cite{QL_orderpolytope}. By \CP{\CPbijection}, the facet-normal rays of $M(P)$ are in bijection with the vertical facets of the Cayley embedding $\D$ of the simplices of Stanley's triangulation, and the facet of $\D$ corresponding to $\alpha$ is described by the full optimal big-M lifting of the supporting inequality with normal $\alpha$ of any one simplex \CP{\CPcayley}; so everything below about facet normals of $M(P)$ is a statement about the facets of $\D$.

\label{rem:facetcharMP}%
\emph{The exact test.} \Cref{thm:facetnormalcharacterization} applies to $M(P)$ with $n+1=|\mathcal L(P)|$. The implication \ref{thm:fnc:1} $\Rightarrow$ \ref{thm:fnc:2} gives necessary conditions for $\alpha$ to be a facet normal of $M(P)$, while \ref{thm:fnc:2} $\Rightarrow$ \ref{thm:fnc:1} gives a sufficient condition only when its construction can be carried out inside $\mathcal L(P)$. The exact test remains \cref{lem:dimofFMP}, that is, \CP{\CPdimofFMP}, applied with $\{\sigma^0,\ldots,\sigma^n\}=\mathcal L(P)$: $\alpha$ is a facet normal of $M(P)$ if and only if the maximizing blocks of all $\sigma\in\mathcal L(P)$ span $\alpha^\perp$.

    \emph{Necessary conditions.} By \ref{thm:fnc:1} $\Rightarrow$ \ref{thm:fnc:2}, every facet normal of $M(P)$ satisfies \ref{thm:fnc:a}--\ref{thm:fnc:c} with at most $|\mathcal L(P)|$ classes. Condition \ref{thm:fnc:c} by itself, however, is vacuous once $|\mathcal L(P)|\geq d-1$, because then each subset $B_j$ can form its own class $\{j\}$; the content lies in \ref{thm:fnc:a}, \ref{thm:fnc:b}, and in the structure of the subsets, which in the proof of \ref{thm:fnc:1} $\Rightarrow$ \ref{thm:fnc:2} are maximizing blocks $B_{k,j}(\alpha)$ of linear extensions $\sigma^k$ of $P$. Each such block is the set of entries of $\sigma^k$ in the consecutive positions $r^k_{j-1}+1,\ldots,r^k_j$ (\cref{subsec:support}), so:
    \begin{enumerate}[label=(\roman*)]
        \item each $B_j$ is \emph{order-convex} in $P$: if $a\prec_P b\prec_P c$ and $a,c\in B_j$\,, then $b\in B_j$\,;
        \item the subsets in one class occur as pairwise disjoint sets of consecutive entries of a single linear extension of $P$.
    \end{enumerate}
    This structure is what the first paper exploits. With $\ell(P)$ the least number of elements whose deletion from $P$ leaves a chain, $M(P)$ is alcoved if and only if $\ell(P)\leq1$ \CP{\CPvertical}, and in this case every primitive facet normal of $M(P)$ is one of $\pm\mathbf{e}_i$ or $\mathbf{e}_i-\mathbf{e}_j$\,, and hence lies in $\{-1,0,1\}^d$ (the converse fails: for the antichain on $[3]$, every primitive facet normal of $M(P)$ lies in $\{-1,0,1\}^3$ by \cref{cor:deltabound}, because $\delta_2=1$, but $(1,1,-1)$ is a facet normal, so $M(P)$ is not alcoved); further, if $\ell(P)\geq1$, the primitive facet normals of $M(P)$ have entries at most $\delta_{\ell(P)}\binom{d-1}{\ell(P)-1}$, which is polynomial in $d$ of degree $\ell(P)-1$ for bounded $\ell(P)$, and this degree is attained \CP{\CPposetheight, \CPgrowth}; and if $\ell(P)$ grows like $d/3$, the entries can be as large as $\mathfrak{B}(d-2)=\Theta(3^{d/3})$ \CP{\CPexp}. The two kinds of constraint behave very differently. For a chain with two isolated elements, $M(P)$ is a sum of $d(d-1)$ staircase simplices, yet its primitive facet normals have entries at most $d-2$ \CP{\CPsharptwo}; by contrast, three unrestricted summands already give entries $\mathrm{Pad}_{d-1}$\,, exponential in $d$ (\cref{thm:padovan}). In the other direction, every facet normal of $M(P)$ is a facet normal of a sum of staircase simplices, so the first paper's \CPcoeffH{} satisfies $H_\ell(d)\leq\mathfrak{h}(d)$, which is at most $\delta_{d-1}$ (\cref{cor:deltabound}) and equals $\operatorname{coeff}(d)$ (\cref{prop:coeff}). For $\ell\geq3$ and small $d$ these bounds are sharper than those of \cite{QL_orderpolytope}: the determinant bound $\delta_6=9$, together with the construction of \CP{\CPlowerfour}, already yields $H_4(7)=9$, and $\operatorname{coeff}(8)=18$ gives $H_3(8)\leq18$ and $H_4(8)\leq18$ \CP{\CPopen}. At the other extreme, the antichain gives $M(P)=U_d$ (\cref{rem:universal}), whose facet normals reach $\operatorname{coeff}(d)$ (\cref{prop:coeff}); so the bounds of the two papers are complementary: the first is governed by the order relations, the second by the number of summands.

    \emph{The converse fails.} For the chain $1\prec_P2\prec_P3$, $M(P)=\Delta_{(1,2,3)}$, and $\alpha=\mathbf{e}_1-\mathbf{e}_3$ satisfies \ref{thm:fnc:2} with $n=0$ via the disjoint subsets $\{1,3\}$ and $\{2\}$, yet it is not a facet normal of $\Delta_{(1,2,3)}$; consistently with (i), $\{1,3\}$ is not order-convex. The obstruction is that the permutations required by \ref{thm:fnc:2} $\Rightarrow$ \ref{thm:fnc:1} need not be linear extensions of $P$.

    \emph{A sufficient condition.} If some choice in the construction of \ref{thm:fnc:2} $\Rightarrow$ \ref{thm:fnc:1} (the order of the blocks, and the order within each part and sign class) produces permutations that all lie in $\mathcal L(P)$, then $\alpha$ is a facet normal of the sum of the corresponding simplices. Discarding repeated permutations (see the comment after \cref{thm:facetnormalcharacterization}) and adding the remaining linear extensions (\cref{lem:persistenceaddition}), $\alpha$ is a facet normal of $M(P)$. \Cref{ex:facetnormalconstruction}(ii) is an instance \textup{(}see also \CP{\CPexample}\textup{)}: its permutations $(1,3,2)$ and $(2,3,1)$ are linear extensions of the antichain on $[3]$, and indeed $(1,1,-1)$ is a facet normal of $M(P)$ for that antichain, in accordance with \CP{\CPvertical} ($\ell(P)=2$, so $M(P)$ is not alcoved).

\backmatter

\section*{Declarations}

\bmhead{Funding}
This work was supported in part by the Office of Naval Research under grant N00014-24-1-2694.

\bmhead{Competing interests}
The authors have no competing interests to declare that are relevant to the content of this article.

\bmhead{Declaration of generative AI and AI-assisted technologies in the manuscript preparation process}
During the preparation of this work, the authors used Claude to assist in developing and checking the work. The authors take full responsibility for the content of the article.

\bmhead{Data availability}
Data sharing is not applicable to this article as no datasets were generated or analyzed during the current study.

\bibliography{staircase}

\end{document}